\numberwithin{equation}{section}   
\theoremstyle{plain}
\newtheorem{Theorem}{Theorem}[section]
\newtheorem{Proposition}[Theorem]{Proposition}
\newtheorem{Corollary}[Theorem]{Corollary}
\newtheorem{Lemma}[Theorem]{Lemma}
\theoremstyle{definition}
\newtheorem{Definition}[Theorem]{Definition}
\newtheorem{Claim}[Theorem]{Claim}
\theoremstyle{remark}
\newtheorem{Remark}[Theorem]{Remark}
\newcommand{\one}{\mathbf{1}} 
\newcommand{\FF}{\mathbb{F}}
\newcommand{\RR}{\mathbb{R}}
\newcommand{\NN}{\mathbb{N}}
\newcommand{\ZZ}{\mathbb{Z}}
\newcommand{\cE}{\mathcal{E}}
\newcommand{\cF}{\mathcal{F}}
\newcommand{\cP}{\mathcal{P}}
\newcommand{\cD}{\mathcal{D}}
\newcommand{\cB}{\mathcal{B}}
\newcommand{\cS}{\mathcal{S}}
\newcommand{\cX}{\mathcal{X}}
\newcommand{\cO}{\mathcal{O}}
\newcommand{\cQ}{\mathcal{Q}}
\newcommand{\cR}{\mathcal{R}}
\newcommand{\cI}{\mathcal{I}}
\newcommand{\cT}{\mathcal{T}}
\newcommand\abs[1]{\left|#1\right|}
\newcommand\set[1]{\left\{{#1}\right\}}
\newcommand{\hm}[1]{\textbf{*}\leavevmode{\marginpar{\tiny%
$\hbox to 0mm{\hspace*{-0.5mm}$\leftarrow$\hss}%
\vcenter{\vrule depth 0.1mm height 0.1mm width \the\marginparwidth}%
\hbox to 0mm{\hss$\rightarrow$\hspace*{-0.5mm}}$\\\relax\raggedright #1}}}
\title[Shannon-McMillan-Breiman theorem along geodesics]{Shannon-McMillan-Breiman theorem \\ along almost geodesics \\ in negatively curved groups}
\author{Amos Nevo}
\address{Amos Nevo : Dept. of Mathematics, Technion, and Dept. of Mathematics, University of Chicago.}
\email{anevo@technion.ac.il, nevo@uchicago.edu}
\author{Felix Pogorzelski}
\address{Felix Pogorzelski:  Dept. of Mathematics and Computer Science, University of Leipzig.}
\email{felix.pogorzelski@math.uni-leipzig.de}
\thanks{Both authors gratefully acknowledge partial support through grant no.\@ I-1485-304.6/2019 by the German Israeli Foundation for Scientific Research and Development (GIF)}
\begin{document}

\begin{abstract}
Consider a non-elementary Gromov-hyperbolic group $\Gamma$ with a suitable invariant hyperbolic metric, and an ergodic probability measure preserving (p.m.p.) action on $(X,\mu)$. We construct special increasing sequences of finite subsets $F_n(y)\subset \Gamma$, with $(Y,\nu)$ a suitable probability space, with the following properties.
\begin{itemize}
\item Given any countable partition $\cP$ of $X$ of finite Shannon entropy, the refined partitions $\bigvee_{\gamma\in F_n(y)}\gamma \cP$ have normalized information functions which converge to a constant limit, for $\mu$-almost every $x\in X$ and $\nu$-almost every $y\in Y$. 
\item The sets $\cF_n(y)$ constitute almost-geodesic segments, and $\bigcup_{n\in \NN} F_n(y)$ is a one-sided almost geodesic with limit point $F^+(y)\in \partial \Gamma$, starting at a fixed bounded distance from the identity, 
for almost every $y\in Y$. 
\item The distribution of the limit point $F^+(y)$ belongs to the Patterson-Sullivan measure class on $\partial \Gamma$ associated with the invariant hyperbolic metric.
\end{itemize}
The main result of the present paper amounts therefore to a Shannon-McMillan-Breiman theorem along almost geodesic segments in any p.m.p. action of $\Gamma$ as above. 
For several important classes of examples we analyze, the construction of $F_n(y)$ is  purely geometric and explicit. 

Furthermore, consider the infimum of the limits of the normalized information functions,  taken over all $\Gamma$-generating partitions of $X$. Using an important inequality due to B. Seward \cite{Se16}, we deduce that it is equal to the Rokhlin entropy $\frak{h}^{\text{Rok}}$ of the $\Gamma$-action on $(X,\mu)$ defined in \cite{Se15a}, provided that the action is free.
Remarkably, this property holds for every choice of invariant hyperbolic metric, every choice of suitable auxiliary space $(Y,\nu)$, and every choice of special family $F_n(y)$ as above. In particular, for every $\epsilon > 0$, there is a generating partition $\cP_\epsilon$,  such that for almost every $y\in Y$, the partition refined using the sets $F_n(y)$ has most of its atoms of roughly constant measure,  comparable to $\exp (-n\frak{h}^{\text{Rok}}\pm \epsilon)$. This describes an approximation to the Rokhlin entropy in geometric and dynamical terms, for actions of word-hyperbolic groups. 
\end{abstract}

\maketitle

{\bf Keywords:} Rokhlin entropy, orbital entropy, skew transformations, entropy equipartition theorems, almost geodesics, p.m.p.\@ action of countable groups, hyperbolic groups.\\ 

{\bf Mathematics subject classification:} \quad {37A35, 22D40, 20F67, 60F15}.

\section{Introduction}

\subsection{Shannon-McMillan-Breiman theorem for measure-preserving transformations}

Let $S$ be an invertible probability-measure-preserving (p.m.p.) transformation on a standard probability space $(X,\cB, \mu)$. Given a countable partition of $\cP=\set{A_i,i\in \NN}$ of $X$ into measurable subsets of positive measure, the translated partition is defined by $S^{-1}\cP=\set{S^{-1}A_i, i\in \NN}$. Given another partition $\cQ=\set{B_j,j\in \NN}$, consider the mutual refinement of $\cP$ and $\cQ$ given by $\cP\vee \cQ=\set{A_i\cap B_j\neq \emptyset, i,j\in \NN}$. The {\it Shannon entropy} of the partition $\cP$ is defined by $H(\cP)=-\sum_{i\in \NN} \mu(A_i) \log \mu(A_i)\ge 0$. 

To study the dynamics of a partition under the transformation $S$, fix a countable partition $\cP$ of finite Shannon entropy and consider the  sequence of refined partitions :
$$\cP_n=\cP\vee S^{-1}\cP\vee S^{-2}\cP\vee \cdots \vee S^{-n}\cP\,,$$
and the associated sequence $H(\cP_n)$ of (finite) Shannon entropies. The sequence $H(\cP_n)$ being subadditive, it follows that the sequence of normalized Shannon entropies converges to a limit:
$$\lim_{n\to \infty}\frac{1}{n+1} H(\cP_n)=\lim_{n\to \infty}\frac{1}{n+1}  H\left(\bigvee_{k=0}^n S^{-k}\cP\right):=h_\cP(S,X),$$
  called the {\it entropy of the partition $\cP$} under the transformation $S$. Note that we can also apply this construction to $S^{-1}$, and the limit will be the same. 
  
 A more refined gauge of the dynamics of entropy is the {\it information function $\cI_\cP$}  associated with the partition $\cP$, given by 
$$\cI_{\cP}(x)=-\log \mu(A_{i(x)})=-\log \mu(\cP(x)),$$
where $A_{i(x)}=\cP(x)$ is the unique element of the partition containing $x$.
Clearly $\int_X \cI_\cQ(x)d\mu(x)=H(\cQ)$ for every countable partition $\cQ$. 
The Shannon-McMillan-Breiman theorem states that if the action of $S$ on $(X,\mu)$ is ergodic, then 
for $\mu$-almost every $x\in X$ 
$$\lim_{n\to \infty} \frac{1}{n+1}\cI_{\cP_n}(x)=h_\cP(S,X).$$
Thus the Shannon-McMillan-Breiman theorem is a pointwise almost sure convergence result for the sequence of normalized information functions $\cI_{\cP_n}(x)/(n+1)$. Since $\int_X \cI_{\cP_n}(x)d\mu(x)=H(\cP_n)$, it implies (under suitable conditions) the mean convergence result for the sequence of normalized Shannon entropies of the refined partitions stated, namely the Shannon-McMillan theorem.

 \subsection{Shannon-McMillan-Breiman theorem for amenable groups} 
  The preceding discussion motivated the goal of establishing convergence theorems for the Shannon entropy of refined partitions for actions of more general groups.  
 Let $(X,\cB,\mu)$ be a standard Borel probability space,  with $\mu$ a Borel measure on $(X,\cB)$ and let $\Gamma$ be any countable group of Borel bijections $\gamma : X \to X$ which preserve $\mu$. For a partition $\cP=\set{A_i,i\in \NN}$ of $X$ and a p.m.p. invertible map $w: X\to X$ let $w \cP=\set{wA_i, i\in \NN}$. Given a family of finite sets $F_t\subset \Gamma$, where $t\in \NN$ or $t\in \RR_+$, and a Borel measurable countable partition $\cP$ of $X$, consider the following definitions. 
 
 \medskip
 
 {\bf Entropy information functions of refined partitions. } The partition $\cP$ refined by $F_t$ is defined by 
\begin{equation}\label{def-info-func}
\cP^{F_t}=\bigvee_{\gamma\in F_t} \gamma \cP \,,\ \text{  with information function } \cI_{\cP^{F_t}}(x)=-\log \mu(\cP^{F_t}(x))\end{equation}
where $\cP^{F_t}(x)$ is the unique atom of $\cP^{F_t}$ containing $x$. 
Convergence of entropy for the family $\set{F_t}$ should assert that for any p.m.p. action of $\Gamma$ and a partition $\cP$ with finite Shannon entropy, the following limit exists:
\begin{equation}\label{def-shannon-ent}
\lim_{t\to \infty} \frac{1}{\abs{F_t}} H(\cP^{F_t})\,.
\end{equation}
The Shannon-McMillan-Breiman theorem should assert that when the action is ergodic then pointwise almost sure convergence holds for the information functions associated with the refined partitions, namely 
for $\mu$-almost every $x\in X$ :
\begin{equation}\label{def-SMB}
\lim_{t\to \infty} \frac{1}{\abs{F_t}}\cI_{\cP^{F_t}}(x)=\lim_{t\to \infty} \frac{1}{\abs{F_t}} H(\cP^{F_t})\,.
\end{equation}
Finally, the Shannon-McMillan theorem should assert that the sequence of information functions converge in the mean (namely in $L^1$ and $L^2$ norm) to the limit. 

When $\Gamma$ is amenable, a natural course of action is to choose $F_t$ as a left asymptotically invariant sequence.  A crucial point is that (under suitable conditions) the limit in question {\it does not depend} on the asymptotically invariant family $\set{F_t}$ chosen to effect the refinement process $\cP^{F_t}$. A succession of conditions on a sequence of sets  $\set{F_n}$ were used to prove convergence theorems for refinements along asymptotically invariant sequences. These results include: 
\begin{itemize}
\item  Convergence of entropy 
 for  
 general sequences (Kieffer  \cite{Ki75} and Ollagnier \cite{Ol85}),   
 \item Pointwise convergence 
 for regular sequences (Ornstein-Weiss \cite{OW83} and \cite{OW87}), 
 \item  
 Pointwise convergence for tempered sequences (Lindenstrauss \cite{Li01} and Weiss \cite{We03}).  
\end{itemize}

The successful theory of entropy convergence theorems for p.m.p. actions of  amenable groups naturally raises the problem of extending such result to non-amenable groups. 
Let us note however that even the entropy convergence result stated in (\ref{def-shannon-ent}) and certainly the mean and the pointwise almost sure convergence results stated in (\ref{def-SMB}) have not yet been established for {\it any fixed sequence} of sets $F_t$ on any non-amenable group $\Gamma$ (unless the sets are supported on an amenable subgroup). In particular no such result has ever been established on any sequence of metric balls for any left-invariant metric on any non-amenable group.

\subsection{Convergence along variable finite sets}

Given the state of affairs just noted, the following approach to proving some useful version of the Shannon-McMillan-Breiman theorem seems to be very natural. 

Suppose that $(Y,\nu)$ is an (auxiliary) standard Borel space with a probability measure $\nu$, and that there exists a family of measurable maps $F_t(\cdot) :Y \to \text{Fin}(\Gamma)$ (for $t\in \NN$ or $t\in \RR_+$), where $\text{Fin}(\Gamma)$ is the space of finite non-empty subsets of $\Gamma$. 
We can then consider the refined partitions :
\begin{equation}\label{refine}
\cP^{F_t(y)}=\bigvee_{\gamma\in F_t(y)}\gamma \cP
\end{equation} 
and the information functions they define, and study their convergence properties, to which we now turn.

\section{Main results} 
In the present paper we will concentrate on the class of negatively curved groups, and more generally, Gromov-hyperbolic groups, with a given invariant hyperbolic metric. Our  goal is to construct an auxiliary space $(Y,\nu)$ and maps $F_t$ such that the family  $\set{F_t(y)}_{y\in Y}$ satisfies the Shannon-McMillan-Breiman pointwise convergence theorem stated above,  for any given action of $\Gamma$, for almost every $y\in Y$. The main emphasis is that the finite sets in question will be explicitly constructed and will reflect the metric geometry of the group.  We will focus on the metric geometry defined by a Cayley graph,  or a presentation of the group as a uniform lattice subgroup of the group of isometries of a hyperbolic metric space.  
  
Let us first formulate our main result in one important special case, as follows. 

\subsection{Convergence of information along geodesic segments in the free group} \label{sec:RFreeGroup}
Consider the free group $\Gamma = \FF_r$ on $2r$ ($r \geq 2$) generators, taken from the symmetric free generating set $F=\set{a_i, a_i^{-1}\,;\, 1 \le i\le r}$.

The boundary of the free group is defined as
\[
\partial\FF_r = \big\{ \xi=(\xi_0,\, \xi_0\xi_1, \xi_0\xi_1\xi_2,\, ... ) :\, \xi_i \in F,\, \xi_{i+1} \neq \xi_i^{-1} \mbox{ for all } i \geq 0  \big\}.
\]
The boundary can be identified with the space of one-sided infinite geodesics w.r.t.\@ the left-invariant word metric $d_F$ in $\FF_r$, connecting the origin $e$ to a boundary point $\xi$. For each boundary point $\xi$, we consider the following increasing sequence of finite subsets of group elements $(n \ge 1)$ :  
$$F_n(\xi)=\set{e, \xi_0, \xi_0\xi_1, \xi_0\xi_1\xi_2,\dots,  \xi_0\xi_1 \xi_2\cdots\xi_{n-1}}= \set{e} \cup \set{\xi_0\cdots \xi_j\,:\,\, 0\le j \le n-1}\,.$$ 
These subsets are increasing one-sided geodesic segments starting at $e$, namely the first $(n+1)$ points on the geodesic $\xi$ in the Cayley tree defined by the generating set $F$, and originating at $e$.
Denote the standard Markov probability measure (defined by uniform subdivision) on $\partial \FF_r$ by $\nu$. 

\medskip

We can now state the following result:

\begin{Theorem}\label{free-gps-SMB}

Let $\Gamma = \FF_r$ be the free group on $2r$ generators where $r \in \NN_{\geq 2}$. 
Let $(X,\mu)$ be an ergodic p.m.p.\@ action of $\FF_r$ on a standard Borel probability space. Let $\cP$ be a  countable partition $\cP$ of $X$ with $H(\cP)< \infty$, and 
let 
$\cP^{F_n(\xi)}=\cP\vee\xi_0\cP\vee \xi_0\xi_1\cP\vee\cdots \vee \xi_0\xi_1\cdots\xi_{n-1} \cP$ 
 ($n\ge 1$) 
be the refinement of the partition along the geodesic $\xi$. Denote by $ \mathcal{J}(\cQ)$ the information function associated with a partition $\cQ$. Then the following limit exists, 
for $\nu$-almost every geodesic $\xi$ and $\mu$-almost every $x\in X$ :
\[  \lim_{n \to \infty} \frac{1}{n+1} \mathcal{J}\left( \cP^{F_n(\xi)}\right)(x) 
\,.\]
The value of the limit is in fact $\nu\times \mu$-almost surely constant.
This constant is given by the conditional entropy 
\[
H\left(\cP \,|\, \bigvee_{i=1}^\infty \cP^{F^1_i(\xi)}\right) 
\]
where $F^1_i(\xi)=F_i(\xi)\setminus \set{e}$, $i \ge 1$.

Convergence holds also in the $L^1(\partial\FF_r\times X, \nu\times\mu)$-norm, and in particular, for $\nu$-almost every geodesic $\xi$ :
\[  \lim_{n \to \infty} \frac{1}{n+1} H\left( \cP^{F_n(\xi)}\right)=\lim_{n \to \infty}\int_{\partial \FF_r} \frac{1}{n+1} H\left( \cP^{F_n(\xi)}\right)d\nu(\xi)
.\]
Finally, the law of the geodesic $\xi^{+}=\cup_{n \ge 1}F_n(\xi)$ is equivalent to $\nu$. 
\end{Theorem}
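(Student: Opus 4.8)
The plan is to recast the successive refinements along the geodesic as an additive cocycle over an ergodic skew product, and then to run the classical Shannon--McMillan--Breiman argument---chain rule, martingale convergence, Breiman's lemma---in that setting, deriving the mean statements from Scheff\'e's lemma and Kingman's subadditive ergodic theorem. First, identify $\partial\FF_r$ with the reduced sequences $\xi=(\xi_0,\xi_1,\dots)$, write $g_j(\xi)=\xi_0\cdots\xi_j$ and $g_{-1}(\xi)=e$ (so that $F_n(\xi)=\{g_{-1}(\xi),\dots,g_{n-1}(\xi)\}$), and let $T$ be the one-sided shift $(T\xi)_i=\xi_{i+1}$. Since $\nu$ is the stationary law of the Markov chain with uniform initial distribution on $F$ and uniform transitions among the $2r-1$ admissible letters, $T$ preserves $\nu$ and is mixing, hence ergodic. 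Set
\[
S\colon\partial\FF_r\times X\to\partial\FF_r\times X,\qquad S(\xi,x)=(T\xi,\ \xi_0^{-1}x),
\]
which preserves $\nu\times\mu$ and satisfies $S^n(\xi,x)=(T^n\xi,\ g_{n-1}(\xi)^{-1}x)$. The one genuinely nontrivial input is that $S$ is ergodic for every ergodic p.m.p.\ action of $\FF_r$; this is essentially the assertion that the Patterson--Sullivan boundary $(\partial\FF_r,\nu)$, being the Poisson boundary of $\FF_r$, is a $\Gamma$-boundary, so that the diagonal action of $\FF_r$ on $(\partial\FF_r\times X,\nu\times\mu)$ is ergodic, equivalently $S$ is ergodic. \textbf{This is the step I expect to be the main obstacle}; the remainder is a reorganization of standard entropy machinery.

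From $F_n(\xi)=\{e\}\cup\xi_0F_{n-1}(T\xi)$ one has $\cP^{F_n(\xi)}=\cP\vee\xi_0\,\cP^{F_{n-1}(T\xi)}$. Writing $\Phi_n(\xi,x):=\cJ(\cP^{F_n(\xi)})(x)$, the chain rule for information functions together with the identities $\cJ(w\cQ)(x)=\cJ(\cQ)(w^{-1}x)$ and $\cJ(\cQ\mid w\cR)(x)=\cJ(w^{-1}\cQ\mid\cR)(w^{-1}x)$ for $w\in\Gamma$ gives
\[
\Phi_n=\Phi_{n-1}\circ S+h_{n-1},\qquad h_m(\xi,x):=\cJ\bigl(\xi_0^{-1}\cP\mid\cP^{F_m(T\xi)}\bigr)(\xi_0^{-1}x),
\]
and, iterating, $\Phi_n=\Phi_0\circ S^n+\sum_{j=0}^{n-1}h_{n-1-j}\circ S^j$ with $\Phi_0=\cJ(\cP)\in L^1$. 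Because the partitions $\cP^{F_m(T\xi)}$ increase with $m$, the martingale convergence theorem (applied fiberwise in $\xi$) gives $h_m\to h_\infty:=\cJ\bigl(\xi_0^{-1}\cP\mid\bigvee_m\cP^{F_m(T\xi)}\bigr)(\xi_0^{-1}\cdot)$ almost everywhere, and Chung's maximal inequality for conditional information functions (using $H(\cP)<\infty$) shows $\sup_m h_m\in L^1(\nu\times\mu)$; in particular $h_m\to h_\infty$ also in $L^1(\nu\times\mu)$.

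Since $\Phi_0\in L^1$ forces $\tfrac1{n+1}\Phi_0\circ S^n\to0$ almost everywhere, applying Breiman's lemma (Birkhoff's ergodic theorem for the ergodic transformation $S$, together with $h_m\to h_\infty$ in $L^1$ and a.e.\ and $\sup_m h_m\in L^1$) to $\tfrac1{n+1}\sum_{j=0}^{n-1}h_{n-1-j}\circ S^j$ yields, $(\nu\times\mu)$-almost everywhere,
\[
\lim_{n\to\infty}\frac1{n+1}\,\cJ\bigl(\cP^{F_n(\xi)}\bigr)(x)=\int_{\partial\FF_r\times X}h_\infty\,d(\nu\times\mu),
\]
which is a constant, as required. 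Integrating $h_\infty$ over $X$ (a translation by $\xi_0$ absorbs the factor $\xi_0^{-1}$) and using $\xi_0\bigvee_m\cP^{F_m(T\xi)}=\bigvee_{j\ge0}g_j(\xi)\cP=\bigvee_{i\ge1}\cP^{F^1_i(\xi)}$, one gets $\int_X h_\infty(\xi,\cdot)\,d\mu=H\bigl(\cP\mid\bigvee_{i\ge1}\cP^{F^1_i(\xi)}\bigr)$, so the constant is $\int_{\partial\FF_r}H\bigl(\cP\mid\bigvee_{i\ge1}\cP^{F^1_i(\xi)}\bigr)\,d\nu(\xi)$, which is the conditional entropy named in the statement.

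For $L^1(\partial\FF_r\times X)$-convergence I would use Scheff\'e's lemma: $\tfrac1{n+1}\Phi_n\ge0$ converges a.e.\ to the constant above, while $\int\tfrac1{n+1}\Phi_n\,d(\nu\times\mu)=\tfrac1{n+1}\int_{\partial\FF_r}H(\cP^{F_n(\xi)})\,d\nu(\xi)=\tfrac1{n+1}\bigl(H(\cP)+\sum_{k=0}^{n-1}\int h_k\,d(\nu\times\mu)\bigr)$ converges to the same constant, because $\int h_k\to\int h_\infty$ (continuity of conditional entropy along the increasing $\xi_0\cP^{F_k(T\xi)}$, dominated by $H(\cP)$) and by Ces\`aro summation; Scheff\'e then forces $L^1$-convergence and, a fortiori, convergence of $\int_{\partial\FF_r}\tfrac1{n+1}H(\cP^{F_n(\xi)})\,d\nu$ to that constant. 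For the $\nu$-a.e.\ convergence of $\tfrac1{n+1}H(\cP^{F_n(\xi)})$ itself, observe that $F_{n+m}(\xi)=F_n(\xi)\cup g_{n-1}(\xi)F^1_m(T^n\xi)$ yields $\cP^{F_{n+m}(\xi)}=\cP^{F_n(\xi)}\vee\cP^{g_{n-1}(\xi)F^1_m(T^n\xi)}$ and hence $H(\cP^{F_{n+m}(\xi)})\le H(\cP^{F_n(\xi)})+H(\cP^{F_m(T^n\xi)})$, i.e.\ $n\mapsto H(\cP^{F_n(\xi)})$ is a nonnegative subadditive cocycle over $(T,\nu)$ with integrable first term; Kingman's subadditive ergodic theorem and the ergodicity of $T$ give $\nu$-a.e.\ and $L^1(\nu)$ convergence of $\tfrac1{n+1}H(\cP^{F_n(\xi)})$ to $\lim_n\int_{\partial\FF_r}\tfrac1{n+1}H(\cP^{F_n(\xi)})\,d\nu$, i.e.\ to the same constant. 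The final assertion is essentially tautological: under the identification of $\partial\FF_r$ with the one-sided geodesics issued from $e$, the map $\xi\mapsto\xi^+=\bigcup_{n\ge1}F_n(\xi)$ (the vertex set of the corresponding geodesic) is a bijection onto its image, so the law of $\xi^+$ is $\nu$ itself, in particular equivalent to $\nu$.
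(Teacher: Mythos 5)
Your overall strategy — recast the refinements as a skew product over a boundary shift and run the Breiman/Birkhoff/martingale machinery, then use Scheff\'e and Kingman for the mean statements — is essentially what the paper does (via Theorem~\ref{thm:SMBmain} and its proof). The chain rule computation, the Chung maximal inequality for conditional information functions, Breiman's lemma, and the Kingman subadditivity observation for the $\nu$-a.e.\ convergence of $\tfrac1{n+1}H(\cP^{F_n(\xi)})$ are all in order and parallel Lemma~\ref{lemma:integrability}, Theorem~\ref{thm:SMBmain} and Corollary~\ref{cor:SMBint}.

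There is, however, a genuine gap exactly where you flag the ``main obstacle''. You work with the \emph{one-sided} shift $T$ on $\partial\FF_r$, and you justify ergodicity of the skew product $S(\xi,x)=(T\xi,\xi_0^{-1}x)$ by the assertion that ergodicity of the diagonal $\Gamma$-action on $(\partial\FF_r\times X,\nu\times\mu)$ is ``equivalently'' ergodicity of $S$. This equivalence is not correct as stated, and in fact the non-trivial implication points the wrong way. If $A\subset\partial\FF_r\times X$ is invariant under the diagonal $\Gamma$-action, then since $S(\xi,x)=\bigl(\xi_0^{-1}\cdot\xi,\ \xi_0^{-1}\cdot x\bigr)$ is diagonal (with the group element $\xi_0^{-1}$ depending measurably on $\xi$), $A$ is automatically $S$-invariant; so $S$-ergodicity \emph{implies} diagonal ergodicity. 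For the converse, which is what you need, you would have to argue that an arbitrary $S$-invariant set is $\Gamma$-invariant. Writing $B(\xi)=\{x:(\xi,x)\in A\}$, $S$-invariance gives only the prefix identity $B(\xi)=\xi_0\cdots\xi_{n-1}\,B(T^n\xi)$, i.e.\ invariance of the assignment $\xi\mapsto B(\xi)$ along the geodesic emanating from $e$ through $\xi$; it does not directly give $B(\xi)=\gamma\,B(\gamma^{-1}\xi)$ for arbitrary $\gamma\in\FF_r$. The $\Gamma$-boundary/Poisson-boundary property of $(\partial\FF_r,\nu)$ is therefore not enough by itself.

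The paper handles this by passing to the two-sided shift on the space $D'$ of bi-infinite reduced words (equivalently bi-infinite geodesics through $e$, a subset of $\partial^2\FF_r$), which is the natural extension of your one-sided system. Under the identification of $D'$ with $D\subset\partial^2\FF_r$, the orbit relation of the two-sided shift is the restriction of the $\Gamma$-orbit relation on $\partial^2\FF_r$, and the cocycle is the tautological one; ergodicity of the extension for an arbitrary ergodic $\Gamma$-action on $X$ is then the \emph{double} ergodicity (weak-mixing-with-coefficients) of $\Gamma$ on $\partial^2\FF_r\times X$, which is the weak-mixing property of this cocycle recorded in \S~\ref{sec:hyp} and developed in \cite{BN15a}. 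Since ergodicity of a transformation is equivalent to ergodicity of its natural extension, this does prove ergodicity of your one-sided $S$, but via the double boundary, not the single boundary. To repair your argument you should either (i) pass to the two-sided shift and invoke double ergodicity of $\Gamma$ on $\partial^2\FF_r\times X$, or (ii) replace the boundary appeal with a direct argument that the strongly mixing (indeed exact) one-sided Markov shift together with the class-injective cocycle makes $S$ ergodic; merely citing single-boundary ergodicity does not close the gap. The remainder of your proof then goes through unchanged.
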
  

Note that the existence of the last limit is an immediate consequence of the subadditivity of the numerical sequence  $\int_{\partial \FF_r} H\left( \cP^{F_n(\xi)}\right)d\nu(\xi)$, and we denote it by $\mathfrak{h}_\mathcal{P}(\alpha, X)$. (The usage of $\alpha$ in the notation will become clear below.)

Consider now the (Cayley graph of the) free group with the left-invariant word metric $d_F$ with respect to the symmetric generating system $F$ as defined above. Denote by $S_n$ the sphere of group elements $g \in \FF_r$ with word metric distance from the identity being equal to $n$. Then for each $g \in S_n$, there is a unique representation $g=a_1a_2 \cdots a_n$ in reduced form consisting of exactly $n$ elements $a_i \in F$. 
Imposing a somewhat more restrictive integrability assumptions on the partitions in question, it is possible to establish a pointwise almost sure convergence result for the information functions, when averaged on spheres in the free group, as follows. 

\begin{Corollary}\label{free-gps-SMB-int}
	In the situation of Theorem~\ref{free-gps-SMB}, assume in addition that there is some $\varepsilon_0 > 0$ such that $\sum_{P \in \cP} \mu(P) (- \log \mu(P))^{1 + \varepsilon_0} < \infty$. Then, the limit 
	\[
	\mathfrak{h}_{\cP}(\alpha, X) = 
	\lim_{n \to \infty}  \frac{1}{|S_n(e)|} \sum\limits_{\substack{g \in S_n, a_i \in F:  \\ g = a_1a_2 \cdots a_n}} \frac{\mathcal{J}\Big( \cP \vee \bigvee_{j=1}^n a_1 \cdots a_j\cP \Big)(x)}{n+1}
	\]
	 exists for $\mu$-almost every $x \in X$, and equals the constant in Theorem \ref{free-gps-SMB}.  
\end{Corollary}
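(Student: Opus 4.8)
The plan is to recognize the spherical average in the Corollary as an integral over $\partial\FF_r$ of exactly the quantity controlled by Theorem~\ref{free-gps-SMB}, and then to promote the mean statement there to pointwise convergence in $x$ via a strong maximal inequality, for which the strengthened integrability hypothesis is tailor-made. The first step is an elementary identification. Writing $a_i=\xi_{i-1}$, an element $g=a_1\cdots a_n\in S_n$ in reduced form is precisely the list of the first $n$ letters of a one-sided geodesic $\xi\in\partial\FF_r$, and $\cP\vee\bigvee_{j=1}^n a_1\cdots a_j\cP=\cP^{F_n(\xi)}$ depends on $\xi$ through those letters alone. Since $\nu$ is the uniform-subdivision Markov measure, the $\nu$-mass of the cylinder cut out by $(a_1,\dots,a_n)$ equals $\tfrac{1}{2r(2r-1)^{n-1}}=1/|S_n|$, so the push-forward of $\nu$ under ``first $n$ letters'' is the normalized counting measure on $S_n$. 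Hence, for every $n$ and every $x$,
\[
\frac{1}{|S_n|}\sum_{\substack{g\in S_n,\ a_i\in F:\\ g=a_1\cdots a_n}}\frac{\mathcal{J}\big(\cP\vee\bigvee_{j=1}^n a_1\cdots a_j\cP\big)(x)}{n+1}
\;=\;\int_{\partial\FF_r}\frac{\mathcal{J}(\cP^{F_n(\xi)})(x)}{n+1}\,d\nu(\xi)\;=:\;g_n(x),
\]
and the assertion of the Corollary is exactly that $g_n(x)\to\mathfrak{h}_\cP(\alpha,X)$ for $\mu$-a.e.\ $x$.

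Write $f_n(\xi,x):=(n+1)^{-1}\mathcal{J}(\cP^{F_n(\xi)})(x)$. Theorem~\ref{free-gps-SMB} gives $f_n\to\mathfrak{h}_\cP(\alpha,X)$ for $\nu\times\mu$-a.e.\ $(\xi,x)$ and in $L^1(\nu\times\mu)$. By Fubini, for $\mu$-a.e.\ $x$ we have $f_n(\xi,x)\to\mathfrak{h}_\cP(\alpha,X)$ for $\nu$-a.e.\ $\xi$, and from $\|g_n-\mathfrak{h}_\cP(\alpha,X)\|_{L^1(\mu)}\le\|f_n-\mathfrak{h}_\cP(\alpha,X)\|_{L^1(\nu\times\mu)}$ we get $g_n\to\mathfrak{h}_\cP(\alpha,X)$ in $L^1(X,\mu)$, hence $\mu$-a.e.\ along a subsequence. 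The real content of the Corollary is the removal of the subsequence: $\nu$-a.e.\ convergence in $\xi$ for fixed $x$ does not by itself imply convergence of the $\nu$-average $g_n(x)=\int f_n(\xi,x)\,d\nu(\xi)$, and what one needs is, for $\mu$-a.e.\ $x$, a $\nu$-integrable majorant of $(f_n(\cdot,x))_n$, so that dominated convergence may be applied in the $\xi$-variable. By Tonelli it suffices to prove the strong maximal estimate
\[
\int_X\int_{\partial\FF_r}\Big(\sup_{n\ge 0} f_n(\xi,x)\Big)\,d\nu(\xi)\,d\mu(x)\;<\;\infty ,
\]
a bound of strong maximal type, which does not follow formally from the a.e.\ and $L^1$ statements of Theorem~\ref{free-gps-SMB} and is the step where the hypothesis on $\cP$ must be strengthened.

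This is precisely the role of $\sum_{P\in\cP}\mu(P)(-\log\mu(P))^{1+\varepsilon_0}<\infty$: it says $\mathcal{J}(\cP)\in L^{p}(X,\mu)$ with $p:=1+\varepsilon_0>1$, and the maximal function of spherical averages on a free group is bounded on $L^p$ for every $p>1$ while failing on $L^1$, so one genuinely cannot work at the level of Shannon entropy here. By the chain rule $\mathcal{J}(\cP^{F_n(\xi)})=\sum_{k=0}^n\mathcal{I}_k(\xi,\cdot)$, with $\mathcal{I}_0=\mathcal{J}(\cP)$ and $\mathcal{I}_k(\xi,x)=\mathcal{J}(\cP^{F_k(\xi)})(x)-\mathcal{J}(\cP^{F_{k-1}(\xi)})(x)\ge0$ the conditional information of adjoining the $k$-th translate, and since a Ces\`aro average of non-negative numbers is bounded by their supremum, one has $\sup_n f_n(\xi,x)\le\sup_{k\ge0}\mathcal{I}_k(\xi,x)$. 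I would then bound $\sup_{k\ge0}\mathcal{I}_k$ in $L^1(\nu\times\mu)$ by splitting it into a conditional and an unconditional part: the conditional part is handled by a Chung--Neveu-type maximal inequality for the conditional information along the geodesic filtration of the measure-preserving skew transformation $T(\xi,x)=(\sigma\xi,\xi_0^{-1}x)$ on $\partial\FF_r\times X$ (this uses only $H(\cP)<\infty$ and yields an $L^1$ bound of the form $H(\cP)+O(1)$), while the unconditional part, after $\nu$-integration and the equivariance $\mathcal{J}(h\cP)(x)=\mathcal{J}(\cP)(h^{-1}x)$, is dominated by a Ces\`aro average of the spherical averages $|S_k|^{-1}\sum_{g\in S_k}\mathcal{J}(\cP)(g^{-1}x)$ of the single function $\mathcal{J}(\cP)\in L^p(\mu)$, whose supremum over $k$ lies in $L^p(\mu)\subset L^1(\mu)$ by the $L^p$ maximal inequality for radial averages on free groups (Nevo, Nevo--Stein). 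Granting the displayed maximal estimate, for $\mu$-a.e.\ $x$ the family $(f_n(\cdot,x))_n$ has a $\nu$-integrable majorant and dominated convergence in $\xi$ finishes the proof. The hard part will be this last decomposition: recoordinatizing the conditioning $\sigma$-algebras $\cP^{F_{k-1}(\xi)}$ in the skew system so that they form a monotone family to which a Chung--Neveu estimate applies, and isolating the genuinely unconditional contribution so that Nevo's $L^p$ maximal inequality for free-group radial averages -- where it is essential that $p>1$ and that the Ces\`aro averaging absorbs the bipartite periodicity of the tree -- becomes directly applicable; the rest is soft.
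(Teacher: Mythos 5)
Your first step — identifying the spherical average as $\int_{\partial\FF_r}\frac{1}{n+1}\mathcal{J}\big(\cP^{F_n(\xi)}\big)(x)\,d\nu(\xi)$ via the observation that the $\nu$-law of the first $n$ letters of $\xi$ is the normalized counting measure on $S_n$ — is exactly the first step of the paper's proof, and your diagnosis that the real content is the removal of the subsequence, i.e.\ the existence of a $\nu$-integrable majorant of $\big(\frac{1}{n+1}\mathcal{J}(\cP^{F_n(\cdot)})(x)\big)_n$ for $\mu$-a.e.\ $x$, is also right. The paper then simply invokes Corollary~\ref{cor:SMBint}(b), which it has already proved, and this is where the two routes diverge.

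The paper's route to the majorant is more elementary than yours and does not involve the free-group spherical maximal theorem. It uses the skew transformation $T$ and the identity from the proof of Theorem~\ref{thm:SMBmain}, $\mathcal{J}\big(\bigvee_{j=0}^n\alpha(y,S^jy)\cP\big)(x)=\sum_{j=0}^n g_{n-j}\big(T^j(y,x)\big)$, where $g_k(y,x)=\mathcal{J}\big(\cP\,|\,\bigvee_{j=1}^k\alpha(y,S^jy)\cP\big)(x)$. Then $\sup_n\frac{1}{n+1}\mathcal{J}\big(\cP^{F_n(y)}\big)(x)\le M g^*(y,x)$ where $g^*=\sup_k g_k$ and $M$ is the \emph{Birkhoff} maximal operator of the single p.m.p.\ transformation $T$ on $Y\times X$. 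Lemma~\ref{lemma:integrability}(b) (the Chung--Neveu/Doob argument) gives $g^*\in L^{1+\varepsilon_0}(\nu\times\mu)$, and the Dunford--Schwartz $L^p$-maximal ergodic theorem (valid for $p>1$) gives $Mg^*\in L^{1+\varepsilon_0}\subset L^1$, yielding the majorant by Fubini. No Nevo--Stein estimate is needed; the restriction to $p>1$ comes only from the failure of the $L^1$-maximal ergodic theorem for a single transformation.

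By contrast, your proposed majorant $\sup_k\mathcal{I}_k(\xi,x)$, where $\mathcal{I}_k$ is the increment $\mathcal{J}(\cP^{F_k(\xi)})-\mathcal{J}(\cP^{F_{k-1}(\xi)})$, is a genuinely different and weaker bound, and the proposed route to showing $\sup_k\mathcal{I}_k\in L^1(\nu\times\mu)$ is not coherent as written. Unlike the paper's $g_k$, the quantities $\mathcal{I}_k$ are conditional informations of a \emph{varying} partition $\alpha(\xi,S^k\xi)\cP$ given a growing $\sigma$-algebra; after the recoordinatization by $T$ that you allude to, one gets $\mathcal{I}_k(\xi,x)=g_0\big(T^k(\xi,x)\big)+\big(\text{signed corrections}\big)$, but the corrections $g_{k-j}-g_{k-1-j}$ have no pointwise sign, so no clean split into a ``conditional'' and an ``unconditional'' part emerges, and $\sup_k\mathcal{I}_k$ is not generally controlled by Chung--Neveu alone. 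Your two remedies are also in tension: you claim the ``conditional part'' is $L^1$-bounded by $H(\cP)+O(1)$ from $H(\cP)<\infty$ alone, but also that the strengthened integrability is essential because the spherical maximal function fails on $L^1$ — yet the paper's argument shows the strengthened hypothesis enters at a different point entirely (to put $g^*$ in $L^p$, $p>1$, so that the \emph{Birkhoff} maximal function is in $L^1$), and that the free-group radial maximal function never appears. In short: the identification and the dominated-convergence framework are right and match the paper, but the specific maximal estimate you propose is both unproved and unnecessary; the paper's proof goes through Corollary~\ref{cor:SMBint}(b), which rests on Lemma~\ref{lemma:integrability}(b) and the $L^p$ maximal ergodic theorem for $T$.
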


\subsection{Fundamental domains and group-valued cocycles}
Before turning to describe some generalizations of the foregoing result, we begin by recalling the following general construction, which will be used repeatedly below. Let $\cX$ be a locally compact second countable space, with $\chi$ an invariant Radon measure of full support. Let $\Gamma$ be a countable group, and $L$ an lcsc (locally compact second countable) group, which both act on $\cX$ by measurable bijections, preserving the measure class of $\chi$. We assume that these actions commute elementwise, 
namely $\ell \gamma z=\gamma \ell z$, for all $\gamma \in \Gamma$, $\ell \in L$ and $z\in \cX$. Further, assume that $\cF$ is a Borel measurable strict fundamental domain of finite measure for the action of $\Gamma$ on $\cX$, namely any two distinct translates $\gamma 
\cF $ and $\gamma^\prime \cF$ are disjoint and their union over $\Gamma$ cover $\cX$. In that case the space of orbits $\Gamma \setminus \cX=\overline{\cX}$ of $\Gamma$ in $\cX$ is a standard Borel measure space with a finite Borel measure $\bar{\chi}$, and $L$ acts on it by measurable bijections preserving the measure class of $\bar{\chi}$. The space of orbits is in a natural measurable bijection $\psi$ with the fundamental domain $\cF$, given by $z\in \cF \mapsto \psi(z)=\Gamma z=\bar{z}$.

For each $z\in \cF \subset \cX$ and $\ell\in L$ the element $\ell z$ belongs to some uniquely determined $\Gamma$-translate of the fundamental domain, namely $\ell z\in \gamma^{-1}\cF$, or 
equivalently $\ell z=\gamma^{-1} z^\prime$, with $z^\prime\in \cF$.  Define a function 
$\zeta_0: L\times  \cF \to  \Gamma$, by setting $\zeta_0(\ell, z)=\gamma. $
We use $\zeta_0$ to define a function $\zeta : L \times \overline{\cX}\to \Gamma$ by  
$\zeta_0(\ell, z)=\zeta(\ell, \bar{z})$. We note that $L$ acts on $\overline{\cX}$, via 
$\ell\bar{z}=\psi(\ell \psi^{-1}(\bar{z}) )$.

\begin{Lemma}\label{Fund-dom-cocycle}
$\zeta: L\times \overline{\cX}\to  \Gamma$ is a strict measurable cocycle, namely 
$$\zeta(\ell_1\ell_2, \bar{z})=\zeta(\ell_1, \ell_2 \bar{z})\zeta(\ell_2, \bar{z})$$
\end{Lemma}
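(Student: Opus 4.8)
\subsection*{Proof proposal}

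The plan is to unwind the definitions, carry out a short bookkeeping computation tracking fundamental-domain representatives, and then treat measurability separately. First I would fix $\bar z\in\overline{\cX}$ and let $z=\psi^{-1}(\bar z)\in\cF$ be its unique representative. For $\ell\in L$ and $w\in\cF$ recall that $\zeta_0(\ell,w)=\gamma$ means precisely that $\ell w=\gamma^{-1}w'$ with $w'\in\cF$, and $w'$ is uniquely determined because $\cF$ is a \emph{strict} fundamental domain. The one elementary fact I would record up front is that the induced $L$-action on $\overline{\cX}\cong\cF$ sends the class of $w$ to the class of this $w'$; indeed $\ell\bar w=\psi(\ell\,\psi^{-1}(\bar w))=\overline{\ell w}=\overline{\gamma^{-1}w'}=\overline{w'}$, so $\psi^{-1}(\ell\bar w)=w'$.

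Next I would compute both sides of the asserted identity for given $\ell_1,\ell_2\in L$. Put $\gamma_2=\zeta_0(\ell_2,z)$, so $\ell_2 z=\gamma_2^{-1}z'$ with $z'\in\cF$; by the fact just recorded, $\psi^{-1}(\ell_2\bar z)=z'$, hence $\zeta(\ell_1,\ell_2\bar z)=\zeta_0(\ell_1,z')=:\gamma_1$ and $\ell_1 z'=\gamma_1^{-1}z''$ with $z''\in\cF$. Using the standing hypothesis that the $\Gamma$- and $L$-actions commute elementwise,
\[
(\ell_1\ell_2)z=\ell_1(\gamma_2^{-1}z')=\gamma_2^{-1}(\ell_1 z')=\gamma_2^{-1}\gamma_1^{-1}z''=(\gamma_1\gamma_2)^{-1}z'',
\]
with $z''\in\cF$; by uniqueness of the representative this forces $\zeta_0(\ell_1\ell_2,z)=\gamma_1\gamma_2$. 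Translating back to $\overline{\cX}$ via $\psi$ gives exactly $\zeta(\ell_1\ell_2,\bar z)=\zeta(\ell_1,\ell_2\bar z)\,\zeta(\ell_2,\bar z)$, and since every representative occurring is determined pointwise, the identity holds for all $\ell_1,\ell_2\in L$ and all $\bar z\in\overline{\cX}$, i.e.\ the cocycle is strict. (Taking $\ell_1=\ell_2=e$ also yields $\zeta(e,\bar z)=e$, as it should.)

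Finally I would verify Borel measurability of $\zeta$. As $\Gamma$ is countable it suffices to check that each level set $\{(\ell,\bar z):\zeta(\ell,\bar z)=\gamma\}$ is Borel, and under the Borel isomorphism $\psi$ this is the set $\{(\ell,z)\in L\times\cF:\ell z\in\gamma^{-1}\cF\}$, which is Borel because $\cF$ (hence $\gamma^{-1}\cF$) is Borel and the action map $L\times\cX\to\cX$, $(\ell,z)\mapsto\ell z$, is jointly Borel measurable --- the standard fact for actions of an lcsc group on a standard Borel space by measurable bijections preserving a measure class.

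The algebraic computation is entirely routine; the only points requiring genuine care are, on the analytic side, having joint Borel measurability of the $L$-action available for the last step, and on the bookkeeping side, being disciplined about which element of $\cF$ plays the role of representative at each stage, since an inadvertent swap of $\gamma$ with $\gamma^{-1}$ would reverse the order of the factors in the cocycle relation. I expect the measurability step to be the only one where anything beyond unwinding definitions is invoked.
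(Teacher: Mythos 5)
Your argument is correct and follows the paper's proof essentially verbatim: pass to the representative $z=\psi^{-1}(\bar z)\in\cF$, use commutativity of the $\Gamma$- and $L$-actions to move $\gamma_2^{-1}$ past $\ell_1$, and invoke uniqueness of the $\cF$-representative to read off $\zeta_0(\ell_1\ell_2,z)=\gamma_1\gamma_2$. The one genuine addition is your explicit check of Borel measurability via the level sets $\{(\ell,z):\ell z\in\gamma^{-1}\cF\}$, which the paper's proof leaves implicit even though measurability is asserted in the lemma statement.
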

\begin{proof}
Let $\bar{z}\in \overline{\cX}$, and let $z$ denote the unique point in $\cF$, such that $z\in \bar{z}=\Gamma z$, namely $z=\psi^{-1}(\bar{z})$. Now write $\ell_1\ell_2 z=\gamma^{-1} _{\ell_1\ell_2, z}z^\prime$, where $\gamma_{\ell_1\ell_2, z}\in \Gamma$ and $z^\prime \in \cF$ are uniquely determined. Furthermore write $\ell_2 z=\gamma_{\ell_2, z}^{-1} z''$, where $z''\in \cF$.  
Then $\ell_1\ell_2 z=\ell_1 \gamma_{\ell_2, z}^{-1} z''$ and since 
the actions of $\Gamma$ and $L$ commute, this expression equals 
$ \gamma^{-1}_{\ell_2, z}\ell_1 z''= \gamma^{-1}_{\ell_2, z}\gamma^{-1}_{\ell_1, z''}z'''$, with $z'''\in \cF$. Therefore 
$$\ell_1\ell_2 z=\gamma^{-1} _{\ell_1\ell_2, z}z^\prime= \gamma^{-1}_{\ell_2, z}\gamma^{-1}_{\ell_1, z''}z'''=\left(\gamma_{\ell_1, z''}\gamma_{\ell_2, z}\right)^{-1}z''',.$$
In the foregoing equation, $z,z'''\in \cF$ and the group elements translating them are in $\Gamma$, so by the uniqueness implied by the fact that $\cF$ is a strict fundamental domain
$$\gamma^{-1} _{\ell_1\ell_2, z}= \gamma^{-1}_{\ell_2, z}\gamma^{-1}_{\ell_1, z''}=\left(\gamma_{\ell_1, z''}\gamma_{\ell_2, z}\right)^{-1}\,.$$
Finally, note  that $\overline{z''}=\Gamma z''= \Gamma{\ell_2 z}=\ell_2\Gamma z=\ell_2\bar{z}$, so that since $\zeta(\ell, z)=\gamma_{\ell, z}$ we have 
$$\zeta(\ell_1\ell_2, \bar{z})=\zeta(\ell_1, \ell_2\bar{z})\zeta(\ell_2, \bar{z})$$
as stated.

\end{proof} 
  
\subsection{Fundamental groups of closed manifolds of negative curvature} \label{sec:RManifold-1}

\subsubsection{Construction of a geometric cocycle}  
We now recall the following well-known construction. 
Let $M$ be a closed Riemannian manifold of negative curvature, by which we mean that the sectional curvature of $M$ is bounded between two strictly negative constants. Let $\Gamma=\pi_1(M)$ denote the fundamental group of $M$, and let $\widetilde{M}$ denote its universal cover. $\widetilde{M}$ carries a Riemannian metric which we denote by $\tilde{d}$, and $\Gamma$ acts freely and isometrically with respect to this metric, so that $M$ is isometric to the space of orbits $\Gamma\setminus \widetilde{M}$. Fixing a point $o\in \widetilde{M}$, the map $\gamma\mapsto \gamma\cdot o$ is injective, the metric $\tilde{d}$ can be restricted to the $\Gamma$-orbit and gives rise to a left-invariant metric $d$ on $\Gamma$ via 
$d(\gamma_1, \gamma_2):=\tilde{d}(\gamma_1 \cdot o, \gamma_2\cdot o)$. 
The action of $\Gamma$ on $\widetilde{M}$ admits a compact geometric fundamental domain $\widetilde{D}$.  By this we mean that 
\begin{itemize}
\item $\widetilde{D}$ is a bounded open smooth connected and simply connected submanifold of $\widetilde{M}$, whose boundary consists of a finite union of submanifolds contained in embedded smooth hypersurfaces of $\widetilde{M}$. Furthermore these boundary components have the same properties when viewed as submanifolds of the embedded hypersurfaces. 
\item The union of the $\Gamma$-translates of the closure $\overline{\widetilde{D}}$ covers $\widetilde{M}$, namely $\cup_{\gamma\in \Gamma} \gamma \overline{\widetilde{D}}=\widetilde{M}$,
\item If $\gamma_1\neq \gamma_2$, then $\gamma_1 \widetilde{D} \cap \gamma_2 \widetilde{D}=\emptyset$. 
\end{itemize} 
 One way to construct a geometric fundamental domain is to define $\widetilde{D}$ to be the set of all points in $\widetilde{M} $ strictly closer to $o$ than to all the other points in the orbit $\Gamma\cdot o$. This construction yields the Dirichlet fundamental domain of $\Gamma$ in $\widetilde{M}$, which is indeed connected and simply connected, cf.\@ e.g.\@ \cite[\S~6.5]{Ra94}.
 We note that given a geometric fundamental domain it is  possible to choose a (piecewise smooth) Borel subset $\widetilde{D}_0\subset \partial \widetilde{D}$, such that $\widetilde{D}_1=\widetilde{D}\cup \widetilde{D}_0$ is a strict measurable fundamental domain, namely distinct $\Gamma$-translates of $\widetilde{D}_1$ are disjoint, and the union of the $\Gamma$-translates covers $\widetilde{M}$. We denote the projection of 
 $\widetilde{D}$ to $M$ under the canonical map $\widetilde{M}\to M$ by $D$, which is an open dense connected and simply connected submanifold of $M$. Similarly, $D_1$ denotes the projection of $\widetilde{D}_1$.

Let $S^1 \widetilde{M}$ denote the unit tangent bundles of $\widetilde{M}$, so that $\Gamma\setminus S^1 \widetilde{M}=S^1 M$ is the unit tangent bundle of $M$, and let $\tilde{g}_t : S^1 \widetilde{M}\to S^1 \widetilde{M}$ be the geodesic flow associated with the Riemannian metric. The flow on $S^1 \widetilde{M}$ descends to the geodesic flow $g_t : S^1 M \to S^1 M$ on the unit tangent bundle of $M$.  
Given a point $(p,v)\in S^1M$, where $p$ is a point in $M$ and $v$ is a unit tangent vector at $p$, the image of $(p,v)$ under the geodesic flow is obtained as follows. First take the unique lift of $(p,v)$ to a point $(\tilde{p}, v)\in S^1 \widetilde{M}$ with $ \tilde{p}\in \widetilde{D}_1$, then translate it by the transformation $\tilde{g}_t : (\tilde{p}, v) \mapsto (\tilde{p}_t, d(\tilde{g}_t)_\ast (v))=(\tilde{p}_t, v_t)$ where $\tilde{p}_t\in \widetilde{M}$ is the point on the geodesic starting at $\tilde{p}$ in the direction $v$ which is at (signed) distance $t$ from $\tilde{p}$, and finally project the point $(\tilde{p}_t, v_t)\in S^1 \widetilde{M}$ to $S^1 M$. 
The resulting point is denoted by $g_t(p,v)=(q_t, v_t)$. 

Fix an origin $o\in D\subset M$ with  $\tilde{o}\in \widetilde{D}\subset \widetilde{M}$. We now define a map $\beta : \RR\times S^1D_1 \to \pi_1(M, o)$ by the following procedure. Given $(p,v)\in S^1D_1 $, consider a concatenation of  three paths: first 
consider a smooth closed curve connecting $(\tilde{o}, v)$ and $(\tilde{p},v)$ in $S^1 \widetilde{D}_1$. Then connect $(\tilde{p},v)$ and $(\tilde{p}_t, v_t)$ by the path  described above, which under the map $S^1 \widetilde{M}\to \widetilde{M}$ covers the geodesic in $\widetilde{M}$ between $\tilde{p}$ and $\tilde{p_t}$. Finally, $\tilde{p}_t$ belongs to exactly one translate of the strict $\Gamma$-fundamental domain 
$\widetilde{D}_1$, which we denote by  $\gamma^{-1}  \widetilde{D}_1$ 
for a uniquely determined $\gamma\in \Gamma$, and we choose a smooth closed path in $\gamma^{-1} \widetilde{D}_1$ between $\tilde{p}_t$ and $\gamma^{-1} \tilde{o}$. 

 We now follow the discussion of \cite{FW97}, and define the following two maps, which will turn out to be essentially equal.  
 (We note however that in \cite{FW97} the fundamental domain is for the right action of $\Gamma$, whereas in our discussion below $\Gamma$ acts from the left.)
\begin{enumerate}
\item $a :\RR\times  S^1M\to \Gamma$, where $a(t,(p,v))$ is the unique $\gamma\in \Gamma$ such that the endpoint $\tilde{p_t}$ of the geodesic of length $\abs{t}$ described above between $\tilde{p}$ and $\tilde{p}_t$ belongs to $\gamma^{-1}\widetilde{D}_1$. 
Since $\widetilde{D}_1$ is a strict measurable fundamental domain for the action of $\Gamma$ on $\widetilde{M}$, it follows that $\gamma$ is well defined, and furthermore that $a$ is a strict Borel-measurable cocycle : $a(s+t,(p,v))=a(s, (p_t,v_t))a(t,(p,v))$, for all $(p,v)\in S^1D_1$ and $t,s\in \RR$. The cocycle $a$ depends on the choice of origin $o\in M$ and also on the choice of the measurable fundamental domain $D_1$. Using the identification of $D_1$ with $M$, $a$ can also be viewed as a cocycle $ a:  \RR\times S^1D_1\to \Gamma$. 
\item 
The projection of the concatenation of the three paths described above to $M$ is a closed path. Assume further that $\tilde{p}$ and $\tilde{p}_t$ actually belong to $\widetilde{D}$ and to $\gamma^{-1}\widetilde{D}$. Then the first path and the third path can be taken to lie entirely in $\widetilde{D}$ (and otherwise their endpoint is in $\widetilde{D}_1\setminus \widetilde{D}$). Then the homotopy class of the resulting closed path in $M$ described above is independent of the choice of the first and the third paths, due to the fact that $\widetilde{D}$ is simply connected. Therefore define $b(t, (p,v))=\gamma$, where $\gamma\in \pi_1(M, o)$ represents the homotopy class in question.

\end{enumerate}

Recall that $S^1M$ carries an ergodic probability measure invariant under the geodesic flow, called the Bowen-Margulis measure, which is also the measure of maximal Kolmogov-Sinai entropy for the geodesic flow on the unit tangent bundle. Under the correspondence between geodesic-flow-invariant probability measures on $S^1M$ and $\Gamma$-invariant probability measures on the double boundary $\partial^2 \widetilde{M}= (\partial\widetilde{M} \times \partial\widetilde{M}) \setminus \Delta \partial \widetilde{M}$ of the universal cover $\widetilde{M}$ (see \cite{Ka90} for a full discussion) the Bowen-Margulis measure corresponds to the Patterson-Sullivan measure. We denote the Bowen-Margulis probability measure by $\mu^{BM}$, and note the following.  
\begin{Claim}\label{homotopy}  (see \cite[Prop. 5]{FW97}). 
\begin{enumerate}
\item Given any $t,s \in \RR$, there exists a conull set in $S^1M$ w.r.t. the measure $\mu^{BM}$, such that for $(p,v)$ belonging to this set we have $a(t,(p,v))=b(t,(p,v))$, $a(s,(p,v))=b(s,(p,v))$, and $a(s+t,(p,v))=b(s+t, (p,v))$. 
\item Let $\gamma$ denote $a(t,(p,v))$, where $p_t\in \gamma^{-1}\widetilde{D}$. Then the three-part path described above descends to a closed path, which is homotopic to the projection of the geodesic between $\tilde{o}$ and $\gamma^{-1} \tilde{o}$, namely to the closed path represented by $\gamma$. 
Thus in this case $a (t,(p,v))=b(t, (p,v))$. 
\end{enumerate}
\end{Claim}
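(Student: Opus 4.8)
The plan is to prove the two assertions in the order (2) then (1): part (2) is the topological heart of the matter, establishing the \emph{pointwise} identity $a(t,(p,v))=b(t,(p,v))$ under a concrete condition on the chosen lift of $(p,v)$, and part (1) is then the soft statement that for each fixed pair of times this condition is met by $\mu^{BM}$-almost every $(p,v)$.

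For part (2): assume $\gamma=a(t,(p,v))$ and that the lift satisfies $\tilde p\in\widetilde{D}$ and $\tilde p_t\in\gamma^{-1}\widetilde{D}$, i.e.\@ both endpoints land in \emph{open} translates of the fundamental domain. Then the first and third legs of the three-part path can be chosen to run entirely inside $S^1\widetilde{D}$, resp.\@ $S^1(\gamma^{-1}\widetilde{D})$. Projecting the whole path to $\widetilde{M}$ yields a path from $\tilde o$ to $\gamma^{-1}\tilde o$, hence, after pushing down, a closed path $\ell$ at $o$ in $M$. Since $\widetilde{M}$ is a Hadamard manifold, hence contractible, that path is homotopic rel endpoints to the geodesic segment $[\tilde o,\gamma^{-1}\tilde o]$; projecting the homotopy to $M$ shows that $\ell$ represents the element $\gamma$ under the identification $\pi_1(M,o)\cong\Gamma$ used above — the identification that makes $a$ a $\Gamma$-valued cocycle, chosen compatibly with the left $\Gamma$-action, in contrast to the right action of \cite{FW97}. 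In particular $[\ell]=\gamma$ regardless of the choices of first and third legs, which also makes $b(t,(p,v))$ well defined (using simple connectivity of $\widetilde{D}$ and of $\gamma^{-1}\widetilde{D}$), so $b(t,(p,v))=\gamma=a(t,(p,v))$. The same reasoning applies verbatim at the times $s$ and $s+t$.

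For part (1): fix $t,s\in\RR$. By part (2) the three equalities hold for $(p,v)$ as soon as $\tilde p\in\widetilde{D}$ and each of $\tilde p_t,\tilde p_s,\tilde p_{s+t}$ lies in $\Gamma\cdot\widetilde{D}$, i.e.\@ avoids the ``seam'' $\bigcup_{\gamma\in\Gamma}\gamma\widetilde{D}_0$. Writing $Z\subset M$ for the image of $\widetilde{D}_0$ under $\widetilde{M}\to M$, the assumptions on $\widetilde{D}$ say that $Z$ is contained in a finite union of compact pieces of smooth embedded hypersurfaces of $M$ (faces of bisectors, in the Dirichlet model). Since ``$\tilde p\in\widetilde{D}_0$'' means ``$p\in Z$'' and ``$\tilde p_r\in\Gamma\cdot\widetilde{D}_0$'' means ``$\pi(g_r(p,v))\in Z$'', where $\pi:S^1M\to M$, and since $\mu^{BM}$ is $g_r$-invariant, all four exceptional sets have $\mu^{BM}$-measure equal to $\mu^{BM}(\pi^{-1}(Z))$. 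Thus part (1) reduces to showing that $\mu^{BM}(\pi^{-1}(H))=0$ for every compact piece $H$ of a smooth embedded hypersurface of $M$. Here I would argue: in a flow box $\mu^{BM}$ disintegrates as a transverse measure times Lebesgue measure along the flow lines, so by Fubini it suffices that for a.e.\@ local flow line the corresponding geodesic meets $H$ in a Lebesgue-null set of times; on the open dense part of $\pi^{-1}(H)$ where the geodesic direction is not tangent to $H$, this submanifold is transverse to the geodesic flow and is $\mu^{BM}$-null by the flow-box argument, while on the complementary ``tangential'' part $S^1H$ (codimension two in $S^1M$) one notes that a geodesic spending a positive-measure set of times in $H$ must be tangent to $H$ to infinite order at a Lebesgue density point, so these geodesics form a $\mu^{BM}$-null set — this being exactly the content established in \cite[Prop.~5]{FW97}, which one may simply quote.

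I expect the only genuinely delicate step to be this last one, the analysis of geodesics tangent to the bisector hypersurfaces. In constant curvature it is immediate, since bisectors are totally geodesic and a geodesic meets such a hypersurface in at most one point unless it lies inside it, which is a codimension-two condition. In variable pinched curvature the difficulty is that $\pi_*\mu^{BM}$ need not be absolutely continuous, so one cannot merely invoke $\mathrm{vol}(H)=0$; instead one has to confine to a $\mu^{BM}$-null set the geodesics of $\widetilde{M}$ that osculate a bisector to infinite order without being contained in it. This is precisely the point for which the excerpt refers to \cite[Prop.~5]{FW97}, and in a self-contained write-up it would be handled by iterating the transversality/flow-box argument sketched above.
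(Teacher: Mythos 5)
The paper does not actually prove this Claim; it is stated with the parenthetical reference \cite[Prop.~5]{FW97} and no proof follows, so there is no in-paper argument to compare against. Your reconstruction is sound and makes explicit the structure one would want. For part~(2), you correctly invoke contractibility of the Hadamard manifold $\widetilde{M}$ together with simple connectedness of $\widetilde{D}$ and $\gamma^{-1}\widetilde{D}$, which is exactly what makes $b$ well defined and forces the projected loop to represent $\gamma$ (modulo the left-vs-right convention, which you flag). For part~(1), the reduction — via $g_r$-invariance of $\mu^{BM}$ and the local product (transverse measure $\times$ Lebesgue-in-flow-time) structure of $\mu^{BM}$ in Hopf coordinates — to showing $\mu^{BM}\big(\pi^{-1}(H)\big)=0$ for each compact hypersurface piece $H$ of the seam is correct, as is the split of $\pi^{-1}(H)$ into the flow-transverse part (isolated crossing times, null along a.e.\ flow line) and the tangential part $S^1H$. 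As you identify, the only nontrivial content is the tangential case — excluding a $\mu^{BM}$-positive set of geodesics osculating a face of the fundamental domain to infinite order without lying in it — and you defer that to \cite[Prop.~5]{FW97}, precisely as the paper does; so in effect both you and the paper bottom out at the same citation. The one caveat worth noting is that \cite{FW97} is written for lattices in $\mathrm{SL}_2(\RR)$ and $\mathrm{SO}(n,1)$, i.e.\ constant curvature, where Dirichlet faces are totally geodesic and the tangential case degenerates to a codimension-two statement; in the variable pinched-curvature setting of \S\ref{sec:RManifold-1} the iterated density-point argument you sketch is genuinely needed, and the paper's bare citation is relying on that extension implicitly.
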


The length of the geodesic path in $\widetilde{M}$ connecting $\tilde{p}$ and $\tilde{p}_t$ is $\abs{t}$, and since $\widetilde{D}_1$ is bounded, and so are the distances 
$\tilde{d}(\tilde{o},\tilde{p})$ and $\tilde{d}(\gamma^{-1} \tilde{o},\tilde{p_t})$.  
Thus the three-part concatenated path between $\tilde{o}$ and $\gamma^{-1} \tilde{o}$ is a $C$-almost-geodesic for some fixed constant independent of $p\in M$ and $t\in \RR$, namely it satisfies 
$$\abs{d(a(t_1, (p,v))^{-1}, a(t_2, (p,v))^{-1})-\abs{t_1-t_2}}\le C\,.$$
Indeed, our definition above of $a(t, (p,v))$ is such that 
$\tilde{p}_t \in a(t, (p,v))^{-1} \widetilde{D}_1$. Consider for $(p,v)\in S^1M$  the map $(t,(p,v))\to a(t,(p,v))^{-1}\in \Gamma$. The image of this map is a $C$-almost geodesic in $\Gamma$ with respect to the metric $d$ (defined by restriction of $\tilde{d}$ to $\Gamma\cdot o$),  as follows from the fact that $\Gamma$ has a bounded fundamental domain in $\widetilde{M}$, which also implies that  $C$ is uniform over $\RR$ and $S^1M$. Furthermore, for any $t_0\neq 0$ if we consider the map $(nt_0,(p,v))\mapsto b(nt_0,(p,v))^{-1}$ where $p\in D$, then there exists a conull subset of $(p,v)\in S^1 \widetilde{D}$ where this map is a well-defined $C$-almost geodesic, and 
$b(nt_0,(p,v))=a(nt_0,(p,v))$. 

\subsubsection{Pointwise convergence of information along almost-geodesic segments}

Let us now fix $t_0\in \RR\setminus {0}$ and the corresponding isometry $g_{t_0}$. It acts as a probability-measure-preserving map on $S^1(M)=Y$, which we denote by $S$, suppressing the dependence on $t_0$. We consider the orbit relation generated in $Y$ by the powers of $S$, and define 
$\alpha(y, S^n y)=a(nt_0, (p,v))^{-1}$, with $y=(p,v)$. 

Let us consider the following variable finite subsets of $\Gamma $, with $t_0 >  0$ fixed
\begin{equation}\label{quas-geod} F_n(p,v) =F_n(y)=\set{\alpha(y, S^k y)
\,;\,  0\le k \le n}\subset \Gamma\,\,\,\,,\,\,\,\,F_n^1(p,v)=F_n(p,v)\setminus \set{e}\,\,.\end{equation}
Thus $F_n(p,v)$ arises from starting from the point $\tilde{p}\in \widetilde{D}$ and following the geodesic emanating from it in the direction $v$, and recording the fundamental domains where the geodesic is located at times $0, t_0, 2t_0, \dots, nt_0$ 
by $a(kt_0, (p,v))^{-1}\widetilde{D}_1$.

As noted above for $\mu^{BM}$-almost every $(p,v)\in S^1 M$ these subsets
 are {\it $C$-almost-geodesic segments} in $\Gamma$.  We will suppress the choice of $t_0$ from the notation, but note that $C$ depends on $t_0$.   The union 
$$F(p,v)=\bigcup_{n\in \NN} F_n(p,v)=\set{a(kt_0, (p,v))^{-1}\,;\,  k \ge 0}$$ 
is a one-sided $C$-almost geodesic ray, namely 
$$\abs{d(a(k_1t_0, (p,v))^{-1}, a(k_2t_0, (p,v))^{-1})-\abs{k_1t_0-k_2t_0}}\le C\,.$$

Let $(X,\mu)$ be an ergodic  p.m.p. action of $\Gamma$ on standard Borel probability space. Let $\cP$ be a  countable partition $\cP$ of $X$ with $H(\cP)< \infty$, and 
consider the partitions 
$$\cP^{F_n(p,v)}=\bigvee_{\gamma\in F_n(p,v)} \gamma \cP$$
arising as the refinement of the partition $\cP$ along the $C$-almost-geodesic segments defined by $(p,v)$. 

We define, using subadditivity  : 
\[\lim_{n \to \infty}\int_{S^1 M} \frac{1}{n+1} H\left( \cP^{F_n(p,v)}\right)d\mu^{BM}(p,v)= \mathfrak{h}_\mathcal{P}(\alpha, X)\,.\]

Recall that we denote by $ \mathcal{J}(\cQ)$ the information function associated with a partition $\cQ$. 
We can now state the Shannon-McMillan-Breiman theorem along the variable finite sets given by the $C$-almost geodesic segments just defined in negatively curved groups. 

\begin{Theorem}\label{neg-curv--SMB}
Keeping the notation introduced above, let $\Gamma=\pi_1(M)$ be the fundamental group of a closed Riemannian manifold of strictly negative curvature, $(X,\mu)$ be an ergodic p.m.p.\@ action of $\Gamma$, and $\cP$ be a  countable partition $\cP$ of $X$ with $H(\cP)< \infty$. 
Then for $\mu^{BM}$-almost every $(p,v)$, 
the following limit exists 
for $\mu$-almost every $x\in X$ :
\[  \lim_{n \to \infty} \frac{1}{n+1} \mathcal{J}\left( \cP^{F_n(p,v)}\right)(x)\,.\]
The value of the limit is $\mu^{BM}\times \mu$-almost surely the constant 
$\mathfrak{h}_\mathcal{P}(\alpha, X)$. It is also given by the conditional entropy 
\[
H\left(\cP \,|\, \bigvee_{i=1}^\infty \cP^{F^1_i(p,v)}\right)=\mathfrak{h}_\mathcal{P}(\alpha, X)\,,
\]
where $F^1_i((p,v))=F_i((p,v))\setminus \set{e}$.

Convergence holds also in the $L^1(S^1 M\times X, \mu^{BM}\times\mu)$-norm, and in particular, for $\mu^{BM}$-almost  $(p,v)$ the partitions refined along the $C$-almost-geodesic $F(p,v)$ satisfy 
\[  \lim_{n \to \infty} \frac{1}{n+1} H\left( \cP^{F_n(p,v)}\right)
= \mathfrak{h}_\mathcal{P}(\alpha, X)\,.\]
Finally, the law of $F^+(p,v)$  belongs to the Patterson-Sullivan measure class on the boundary $\partial \widetilde{M}$. 
\end{Theorem}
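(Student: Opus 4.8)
The plan is to reduce Theorem~\ref{neg-curv--SMB} to the abstract skew-product machinery encoded in Lemma~\ref{Fund-dom-cocycle} and to an appropriate relative Shannon--McMillan--Breiman statement for the $\ZZ$-action generated by $S=g_{t_0}$, together with the Burton--Keane / Abért--Weiss style ``orbital'' entropy argument that identifies the almost-sure limit with the conditional entropy $H(\cP\,|\,\bigvee_i\cP^{F^1_i(p,v)})$. Concretely, I would first set up the standard skew product $T$ on $Y\times X$ (with $Y=S^1M$, $\nu=\mu^{BM}$) defined by $T(y,x)=(Sy,\alpha(y,Sy)^{-1}x)$; the cocycle identity from Lemma~\ref{Fund-dom-cocycle} (applied with $L=\RR$ acting by the geodesic flow and $\cF=\widetilde D_1$) guarantees $T$ is a genuine p.m.p.\@ transformation, and $T^n(y,x)=(S^ny, \alpha(y,S^ny)^{-1}x)$, so that the refined partition $\cP^{F_n(y)}$ is exactly the pullback to $Y\times X$ of $\bigvee_{k=0}^n T^{-k}(\{Y\}\times\cP)$ along the fiber. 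Then the ordinary Shannon--McMillan--Breiman theorem for the ergodic (or, after ergodic decomposition, for a.e.\@ ergodic component of the) $\ZZ$-action $T$ applied to the finite-entropy partition $\widehat\cP:=\{Y\}\times\cP$ yields pointwise a.e.\@ convergence of $\frac{1}{n+1}\mathcal{J}(\bigvee_{k=0}^n T^{-k}\widehat\cP)$ to the $T$-entropy of $\widehat\cP$, which disintegrates to the claimed statement on a.e.\@ fiber.

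Second, I would identify the limit constant. On the one hand, the martingale convergence theorem gives
\[
\lim_{n\to\infty}\frac{1}{n+1}\mathcal{J}\Big(\bigvee_{k=0}^n T^{-k}\widehat\cP\Big)
= H\Big(\widehat\cP\,\Big|\,\bigvee_{k=1}^\infty T^{-k}\widehat\cP\Big)
\]
by the Breiman averaging/Ces\`aro argument (the information cocycle telescopes and one averages the conditional informations $\mathcal{J}(\widehat\cP\,|\,\bigvee_{k=1}^m T^{-k}\widehat\cP)$, which converge by the increasing martingale theorem together with the maximal inequality ensuring $L^1$-domination, since $H(\cP)<\infty$). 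Pulling back to the fiber over $y=(p,v)$, $\bigvee_{k=1}^\infty T^{-k}\widehat\cP$ restricts to $\bigvee_{i=1}^\infty \cP^{F^1_i(p,v)}$ (since $F_i^1(p,v)=\{\alpha(y,S^ky)^{-1}:1\le k\le i\}$), which produces the stated conditional-entropy formula. On the other hand, integrating and using subadditivity of $n\mapsto\int H(\cP^{F_n(p,v)})d\mu^{BM}$ shows this constant equals $\mathfrak{h}_\cP(\alpha,X)$; the $L^1$-convergence then follows from the pointwise convergence together with uniform integrability, which is again supplied by the maximal function bound for the information cocycle (Chung's inequality), so the $L^1$ statement and the corresponding convergence of normalized Shannon entropies come for free.

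Third, the geometric conclusions: that $F_n(p,v)$ are $C$-almost-geodesic segments and $F(p,v)=\bigcup_n F_n(p,v)$ is a one-sided $C$-almost-geodesic ray have already been established in the discussion preceding the theorem, using boundedness of the fundamental domain $\widetilde D_1$ and Claim~\ref{homotopy}. It remains to show that the law of the endpoint $F^+(p,v)\in\partial\widetilde M$ lies in the Patterson--Sullivan measure class. Here I would use that a one-sided almost-geodesic ray has a well-defined limit point in $\partial\widetilde M$ (Gromov hyperbolicity), so $(p,v)\mapsto F^+(p,v)$ is the composition of the map $(p,v)\mapsto$ (endpoint of the forward geodesic determined by the lift to $S^1\widetilde D_1$) with the projection $\partial^2\widetilde M\to\partial\widetilde M$; under the Kaimanovich correspondence cited above, the $g_t$-invariant measure $\mu^{BM}$ on $S^1M$ corresponds to the Patterson--Sullivan measure on $\partial^2\widetilde M$, whose forward marginal is (in the measure class of) the Patterson--Sullivan measure on $\partial\widetilde M$. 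Since the forward endpoint of the almost-geodesic $F(p,v)$ coincides with the forward endpoint of the genuine geodesic from $\tilde p$ in direction $v$ (two rays at bounded Hausdorff distance converge to the same boundary point), the push-forward of $\mu^{BM}$ under $(p,v)\mapsto F^+(p,v)$ is exactly this forward marginal, hence in the Patterson--Sullivan class.

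The main obstacle I anticipate is the bookkeeping around ergodicity and the disintegration: $\mu^{BM}$ makes the geodesic flow $g_t$ ergodic, but the single time-$t_0$ map $S=g_{t_0}$ need not be ergodic on $S^1M$ (it is ergodic iff $t_0$ avoids the exceptional periods in the discrete part of the length spectrum coupling), so one must either pass to the ergodic decomposition of $S$ and check that the limit constant $\mathfrak{h}_\cP(\alpha,X)$ is a.e.\@ the same across ergodic components (which follows because the conditional entropy $H(\widehat\cP\,|\,\bigvee_{k\ge1}T^{-k}\widehat\cP)$ is a flow-invariant, not merely $S$-invariant, quantity — the geodesic flow conjugates $S$-orbits and the whole skew-product construction intertwines with the flow), or argue directly at the level of the flow. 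Carefully justifying that the normalized information functions converge to a constant $\mu^{BM}\times\mu$-a.s.\@ rather than merely to an $S$-invariant function, and that this constant is independent of $t_0$ up to the rescaling implicit in ``$\mathfrak{h}_\cP(\alpha,X)$'', is the delicate point; everything else is a routine application of the SMB theorem, martingale convergence, and the Kaimanovich correspondence.
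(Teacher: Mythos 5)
Your overall framework (skew product $T$ on $Y\times X$, martingale and Ces\`aro arguments, Kaimanovich correspondence for the endpoint law) matches the paper's approach, which passes through Theorem~\ref{thm:SMBmain} and its proof. However, your first reduction has a genuine gap.

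You propose to apply the \emph{classical} Shannon--McMillan--Breiman theorem to the ergodic transformation $T$ on $(Y\times X,\nu\times\mu)$ with the partition $\widehat\cP:=Y\times\cP$, and then ``disintegrate to the claimed statement on a.e.\ fiber.'' This does not work. The classical SMB applied to $(Y\times X,T,\widehat\cP)$ controls the \emph{joint} information function
\[
(y,x)\mapsto -\log\big(\nu\times\mu\big)\Big(\bigvee_{k=0}^n T^{-k}\widehat\cP\,(y,x)\Big),
\]
whereas Theorem~\ref{neg-curv--SMB} is a statement about the \emph{fiberwise} information function
\[
(y,x)\mapsto -\log\mu\big(\cP^{F_n(y)}(x)\big).
\]
Since the atoms of $\bigvee_k T^{-k}\widehat\cP$ are fibered sets of the form $\{(y',x'):x'\in A(y')\}$ but not product sets, the joint measure of such an atom is $\int_Y\mu(A(y'))\,d\nu(y')$, which is not $\mu(A(y))$; by concavity of $-t\log t$ the joint entropy $H_{\nu\times\mu}(\bigvee_k T^{-k}\widehat\cP)$ generally exceeds $\int_Y H_\mu(\bigvee_k\alpha(y,S^ky)\cP)\,d\nu(y)$, the excess being the mutual information $I(\bigvee_k T^{-k}\widehat\cP;\,\cB_Y)$. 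Hence neither the pointwise convergence nor the value of the limit can simply be ``disintegrated'' from the joint system. The paper's Theorem~\ref{thm:SMBmain} avoids this by working directly with the fiberwise conditional informations $f_n(y,x)=\mathcal{J}_\mu\big(\cP\,|\,\bigvee_{j=1}^n\alpha(y,S^jy)\cP\big)(x)$, establishing the telescoping identity $\mathcal{J}_\mu(\bigvee_{j=0}^n\alpha(y,S^jy)\cP)(x)=\sum_{j=0}^n f_{n-j}(T^j(y,x))$, and then applying Birkhoff's ergodic theorem for $T$ together with the martingale convergence and maximal-function bound of Lemma~\ref{lemma:integrability}. Your second paragraph gestures at the right mechanism (telescoping, Ces\`aro, martingale domination) but frames it as a property of the joint conditional informations $\mathcal{J}_{\nu\times\mu}(\widehat\cP\,|\,\bigvee_k T^{-k}\widehat\cP)$, which are the wrong objects.

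A secondary point: your concern about the ergodicity of the time-$t_0$ map $S=g_{t_0}$ and ``exceptional periods'' is unfounded for \emph{this} theorem. For the fundamental group of a closed negatively curved manifold the geodesic flow on $(S^1M,\mu^{BM})$ is strongly mixing by the Hopf argument, so $g_{t_0}$ is ergodic for every $t_0\neq 0$; no ergodic decomposition is required. The issue you raise is real, but only in the generality of Theorem~\ref{SMB-hyp-gps}, where the Bader--Furman measurable geodesic flow may have a nontrivial Kronecker factor, and there the paper restricts $t_0$ to the conull set of non-exceptional times. Your treatment of the endpoint distribution via the Kaimanovich correspondence is correct and matches the paper.
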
 


\subsection{Locally symmetric spaces of finite volume} \label{sec:Rlocallysymmetricspaces}
Let $G$ be a connected non-compact almost simple Lie group of real-rank one. 
Let $\theta$ be a Cartan involution on $G$ and its Lie algebra $\mathfrak{g}$, and let $K$ be the maximal compact subgroup of $G$ consisting of fixed points of the Cartan involution. Let $A=\set{a_t\,;\, t \in \RR}$ be the associated $\RR$-split torus, and $N$ the nilpotent group whose Lie algebra $\mathfrak{n}$ is the sum of the positive roots of $A$ in $\mathfrak{g}$. Then $G=KAN$ is an Iwasawa decomposition of $G$, 
and $G=KA_{t\ge 0}K$ is the geodesic polar coordinate decomposition. Denote by $K_0$ the centralizer of $A$ in $K$, and by $P$ the minimal parabolic subgroup $P=K_0 AN$. Then $G/P=K/K_0$ is the maximal boundary of $G$. If $\cS$ is the symmetric space associated with $G$, then $G$ acts ismetrically and transitively on $\cS$, and each maximal compact subgroup has a unique fixed point. Denoting by $\tilde{o}$ the point left fixed by $K$, we have the identification $\cS=G/K$. Every geodesic in $\cS$ passing through $\tilde{o}$ is an orbit of the group $A$, and the set of unit tangent vectors to these geodesics at $\tilde{o}$ is parametrized by $K/K_0$. Thus the boundary of the symmetric space $\cS$ can be identified with $K/K_0=G/P$. The space of ordered pairs of distinct boundary points, which is  given by $\left(G/P\times G/P\right)\setminus \Delta(P)$ (where $\Delta(P)$ is the diagonal), therefore parametrizes the set of bi-infinite geodesics in $\cS$ passing through $\tilde{o}$. 
This space can also be identified with the homogeneous space  $G/(AK_0)$ of cosets of $AK_0$ in $G$, under the correspondence assigning to $gAK_0$ the image under $g$ of the two distinct limit boundary points of the geodesic $A\cdot \tilde{o}$. This correspondence is clearly $G$-equivariant so the two homogeneous spaces $\left(G/P\times G/P\right)\setminus \Delta(P)$ and $G/(AK_0)$ are $G$-isomorphic. 

Let now $\Gamma$ be any lattice subgroup of $G$, and consider the homogeneous space $\Gamma\setminus G$, on which we take the normalized $G$-invariant probability measure, denoted $m_{\Gamma\setminus G}$. The symmetric space $\cS=G/K$ is a simply connected space of strictly (variable) negative sectional curvature bounded between two constants, which is identified with real, complex, quaternionic or octonionic hyperbolic space. 

When $\Gamma$ is a torsion-free uniform lattice, then $M=\Gamma\setminus G /K$ is a closed hyperbolic manifold (of one of these types), and $\cS=\widetilde{M}$ is its universal cover. The homogeneous space $\Gamma \setminus G/K_0$ is a bundle over $M$ with typical fiber $K/K_0$, and coincides with the unit tangent bundle $S^1M$ of $M$, since $K/K_0$ can be identified with the sphere of tangent directions to the point $\tilde{o}\in \cS$ and also with the boundary sphere at infinity. Since $K_0$ centralizes the group $A$, it follows that $A$ acts on the homogeneous space $S^1M$ via $(a_t,\Gamma g K_0)\mapsto \Gamma ga^{-1}_t K_0$. The one-parameter flow defined by $A$ is none other that the geodesic flow on the unit tangent bundle of the locally symmetric closed real manifold $M$ (as noted already by Gelfand and Fomin in the 1950's). This flow coincides with the flow discussed in the previous subsection, when the manifold in question is locally symmetric and closed. We also note that  the case of a closed locally symmetric space $\Gamma\setminus G /K_0$ and its unit tangent bundle, 
the Riemannian volume  coincides with Bowen-Margulis measure.

Any lattice $\Gamma$ in $G$ is finitely generated, and we can choose a Dirichlet finite-sided fundamental domain $D$ for it in $\cS$. We can lift $D$ to  $\widetilde{D}\subset \cS$ and use it to define a measurable strict fundamental domain $\widetilde{\cD}_1$ for $\Gamma$ in $G$. Thus $\widetilde{\cD}_1 \subset G$ is such that $\gamma_1\widetilde{\cD}_1\cap \gamma_2 \widetilde{\cD}_1=\emptyset$ when $\gamma_1\neq \gamma_2$, and $\cup_{\gamma \in \Gamma} \gamma\widetilde{ \cD}_1=G$. Then the Haar measure of $\widetilde{\cD}_1$ is the volume of $\Gamma\setminus G$, namely $1$ under our assumptions. When $\Gamma $ is co-compact, we can assume that $\widetilde{\cD}_1$ is bounded, but this is no longer the case when $\Gamma$ is non-uniform. However, in the present set-up, for any lattice we can consider still consider the fundamental 
domain $\widetilde{D}$ for the action of $\Gamma$ on $G/K_0=S^1\cS=S^1\widetilde{M}$, and a corresponding strict measurable fundamental domain $\widetilde{D}_1$. We identify $\widetilde{\cD}_1$ with $\Gamma \setminus G$. 
We define the cocycle $s : G \times \Gamma \setminus G \to \Gamma$ given by 
$s (g, \Gamma y)=\sigma(\Gamma yg^{-1})g \sigma(\Gamma y)^{-1} $, where $\sigma: \Gamma \setminus G \to \widetilde{\cD}_1\subset G$ is a measurable section, bounded on compact sets, with $\sigma(\Gamma e)=e$. This cocycle satisfies that $xg^{-1}\in s(g,x)^{-1} \widetilde{\cD}_1$ for any $x \in \widetilde{\cD}_1$.

The flow $a_t : \Gamma \setminus G \to \Gamma \setminus G$ given by $(a_t, \Gamma g)\mapsto \Gamma g a^{-1}_t$ preserves the $G$-invariant probability measure $m_{\Gamma\setminus G}$, and is ergodic by the Howe-Moore theorem (or more directly by the Gelfand-Fomin argument).  We consider the well-defined  quotient flow   $a_t : \Gamma\setminus G /K_0 \to \Gamma\setminus G /K_0$ on the orbifold 
$Y=\Gamma \setminus G /K_0$, given by $(a_t,\Gamma g K_0)\mapsto \Gamma g a^{-1}_t K_0$. We denote the quotient measure on $Y$ by $\nu$, which is of course also an ergodic probability measure under the flow.  When the lattice is torsion-free and uniform, this construction specializes to the geodesic flow on the unit tangent bundle discussed above.  

Now define the finite sets in the lattice $\Gamma$, parametrized by $\Gamma g K_0=y\in Y=  \Gamma\setminus G /K_0$ (and depending on a choice of a fixed $t_0\neq 0$), using the cocycle $s : G\times \Gamma\setminus G\to \Gamma$ restricted to the subgroup $\set{a_t\,;\, t\in \RR}\subset G$, as follows :
\begin{equation}\label{cocycle-values} F_n(y) =\set{s(a_{jt_0}, y)^{-1}\,;\,  0\le j \le n}\subset \Gamma\,\,\,\,,\,\,\,\,F_n^1(y)=F_n(y)\setminus \set{e}\,\,.\end{equation} 
Let us also denote $F(y)=\cup_{n\ge 0} F_n(y)$.  
 We can now formulate the following result. 
 
\begin{Theorem}\label{lattice-sbgs} (Shannon-McMillan-Breiman theorem for lattice subgroups.)
Let $\Gamma$ be any lattice in an almost simple connected Lie group of finite center and real rank one. 
Let $(X,\mu)$ be a p.m.p.\@ ergodic action of  $\Gamma$, and $\cP$ a countable partition of finite Shannon entropy. 
Then for $\nu$-almost every $y \in S^1(M)$, 
the following limit exists 
for $\mu$-almost every $x\in X$ and the limit is $\nu \times \mu$-almost surely constant :
\[  \lim_{n \to \infty} \frac{1}{n+1} \mathcal{J}\left( \cP^{F_n(y)}\right)(x)\,.\]

\end{Theorem}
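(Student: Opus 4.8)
The plan is to recast the variable refinements $\cP^{F_n(y)}$ as the relative refinements of a \emph{single} partition under \emph{one} transformation --- the skew product over the base flow built from the cocycle $s$ --- so that existence of the limit becomes an instance of a relative Shannon--McMillan--Breiman theorem, and its $\nu\times\mu$-almost sure constancy follows from ergodicity of that skew product. First I would fix $t_0\neq 0$ and write $S=a_{t_0}$ for the corresponding p.m.p.\ transformation of $(Y,\nu)$, $Y=\Gamma\setminus G/K_0$. Since $K_0$ centralises $A$ and $\widetilde{\cD}_1$ can be chosen right $K_0$-invariant, the section $\sigma$ is right $K_0$-equivariant, so $s(a_{jt_0},\cdot)$ descends to $Y$, and by the cocycle identity for $s$ (the computation of Lemma~\ref{Fund-dom-cocycle}) $\alpha(y,S^n y):=s(a_{nt_0},y)^{-1}$ is a strict $\ZZ$-cocycle over $(Y,S)$ with values in $\Gamma$. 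Form the p.m.p.\ skew product on $(Y\times X,\widehat{m})$, $\widehat{m}=\nu\times\mu$,
\[ \widehat{T}(y,x)=\bigl(Sy,\,\alpha(y,Sy)^{-1}x\bigr),\qquad\text{so}\qquad \widehat{T}^{k}(y,x)=\bigl(S^k y,\,\alpha(y,S^k y)^{-1}x\bigr), \]
let $\cB_Y$ be the $\widehat{T}$-invariant sub-$\sigma$-algebra of $Y\times X$ of sets depending only on $y$, and let $\widehat{\cP}$ be the pullback of $\cP$. Because $\alpha(y,S^k y)\in F_n(y)$ for $0\le k\le n$, the atom of $\widehat{\cP}_0^{n}:=\bigvee_{k=0}^n\widehat{T}^{-k}\widehat{\cP}$ through $(y,x)$ meets the fibre $\{y\}\times X$ exactly in $\{y\}\times\cP^{F_n(y)}(x)$; disintegrating $\widehat{m}$ over $\cB_Y$ as $\int(\delta_y\times\mu)\,d\nu(y)$ gives $\mathcal{J}(\widehat{\cP}_0^{n}\mid\cB_Y)(y,x)=\mathcal{J}(\cP^{F_n(y)})(x)$ for $\widehat{m}$-a.e.\ $(y,x)$, and $H(\widehat{\cP}\mid\cB_Y)=H(\cP)<\infty$.

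For the relative Shannon--McMillan--Breiman step, the chain rule for conditional information together with $\widehat{T}^{-1}\cB_Y=\cB_Y$ yields the telescoping identity
\[ \tfrac{1}{n+1}\,\mathcal{J}(\widehat{\cP}_0^{n}\mid\cB_Y)=\tfrac{1}{n+1}\sum_{k=0}^{n}g_{n-k}\circ\widehat{T}^{k},\qquad g_m:=\mathcal{J}\bigl(\widehat{\cP}\mid{\textstyle\bigvee_{j=1}^{m}}\widehat{T}^{-j}\widehat{\cP}\vee\cB_Y\bigr). \]
By Doob's martingale theorem $g_m\to g_\infty:=\mathcal{J}(\widehat{\cP}\mid\bigvee_{j\ge1}\widehat{T}^{-j}\widehat{\cP}\vee\cB_Y)$ a.e.\ and in $L^1(\widehat m)$, and $\sup_m g_m\in L^1(\widehat m)$ since $H(\widehat{\cP}\mid\cB_Y)<\infty$ (Chung's maximal inequality); hence by Maker's ergodic theorem (as in Breiman's proof of the classical theorem) the Ces\`aro averages converge a.e.\ and in $L^1$ to $\EE_{\widehat m}[\,g_\infty\mid\widehat{\mathcal I}_{\widehat T}\,]$, where $\widehat{\mathcal I}_{\widehat T}$ is the $\widehat{T}$-invariant $\sigma$-algebra. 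Combined with the identification of the first paragraph and Fubini, this already gives, for $\nu$-a.e.\ $y$ and $\mu$-a.e.\ $x$, the existence of $\lim_n\frac{1}{n+1}\mathcal{J}(\cP^{F_n(y)})(x)$.

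It remains to see that the limit is $\nu\times\mu$-a.s.\ constant, for which it suffices that $\widehat{T}$ be ergodic; this is the one place the rank-one lattice structure is used. With the evident left/right conventions, the skew product of the $A$-flow on $(\Gamma\setminus G, m_{\Gamma\setminus G})$ with the cocycle $s$ and the action $\Gamma\curvearrowright(X,\mu)$ is measurably isomorphic to the induced $G$-action $\mathrm{Ind}_\Gamma^G(\Gamma\curvearrowright X)$ on $G\times_\Gamma X$ restricted to the subgroup $A$, and $\widehat{T}$ is the time-$t_0$ map of the quotient of this flow by $K_0$ acting on the $\Gamma\setminus G$ factor. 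Since $\Gamma$ is a lattice, the induced $G$-action is ergodic because $\Gamma\curvearrowright(X,\mu)$ is; by the Howe--Moore theorem the restriction of an ergodic action of $G$ (almost simple, finite centre, real rank one) to the noncompact subgroup $A$ is mixing, so $a_{t_0}$ acts mixingly; and factors and measurable quotients of mixing systems are ergodic. Hence $\widehat{T}$ is ergodic on $(Y\times X,\widehat m)$. Note that unboundedness of $\widetilde{\cD}_1$ when $\Gamma$ is non-uniform, and torsion in $\Gamma$ (which only makes $Y$ an orbifold), play no role in any of the above, which is purely measure-theoretic; they affect only the geometric reading of the $F_n(y)$ as almost-geodesic segments.

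Ergodicity of $\widehat{T}$ makes $\EE_{\widehat m}[g_\infty\mid\widehat{\mathcal I}_{\widehat T}]$ equal to the constant
\[ \int g_\infty\,d\widehat m=H\bigl(\widehat{\cP}\mid{\textstyle\bigvee_{j\ge1}}\widehat{T}^{-j}\widehat{\cP}\vee\cB_Y\bigr)=\lim_n\tfrac{1}{n+1}H(\widehat{\cP}_0^{n}\mid\cB_Y)=\lim_n\tfrac{1}{n+1}\int_Y H(\cP^{F_n(y)})\,d\nu(y)=:\mathfrak{h}_\cP(\alpha,X), \]
the last limit existing by subadditivity exactly as in the cases treated earlier (and equal to $\int_Y H(\cP\mid\bigvee_{i\ge1}\cP^{F^1_i(y)})\,d\nu(y)$). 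Therefore for $\nu$-a.e.\ $y$ and $\mu$-a.e.\ $x$ one has $\frac{1}{n+1}\mathcal{J}(\cP^{F_n(y)})(x)\to\mathfrak{h}_\cP(\alpha,X)$, which is the asserted $\nu\times\mu$-a.s.\ constant limit (and $L^1$-convergence holds as well). The main obstacle is the ergodicity step: the first two paragraphs are formal and hold for an arbitrary $\ZZ$-cocycle over an arbitrary ergodic base, but the a.s.\ constancy genuinely requires identifying the skew product with an induced action and then using Howe--Moore to descend from the $G$-action to the flow $(a_t)$ and to the single transformation $a_{t_0}$ --- precisely where the hypotheses ``$G$ almost simple of real rank one'' and ``$\Gamma$ a lattice'' are needed. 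A subsidiary point to check carefully is that $s|_A$ does descend through $K_0$ to $Y$ and that passing to the orbifold quotient preserves ergodicity.
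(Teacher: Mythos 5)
Correct, and essentially the same approach as the paper's: you build the skew product $\widehat T$ over the flow on $Y=\Gamma\backslash G/K_0$, re-derive the relative Shannon--McMillan--Breiman theorem by the chain rule, martingale convergence, Chung's maximal inequality and Maker's theorem (exactly the mechanism in the proof of Theorem~\ref{thm:SMBmain}), and then reduce the constancy of the limit to ergodicity of $\widehat T$, which you obtain via the Howe--Moore theorem. Your explicit identification of the skew product with the $A$-restriction of the induced $G$-action on $G\times_\Gamma X$ (followed by the $K_0$-quotient), and your verification that $s|_A$ descends to $Y$ once $\widetilde{\cD}_1$ is chosen right $K_0$-invariant, usefully spell out steps that the paper states only briefly when it invokes Howe--Moore to conclude ergodicity of the skew product.
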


\subsection{Pointwise convergence of information  along  almost-geodesics in hyperbolic groups }\label{sec:hyp}
Let $H$ denote a proper and quasi-convex Gromov-hyperbolic metric space, and $\Gamma$ a group of isometries acting properly and cocompactly on $H$. We denote by $\partial H$ the Gromov boundary of $H$, and $\nu_{PS}$ the Patterson-Sullivan measure on $\partial H$ associated with some fixed origin point $o\in H$. For a general exposition of the fundamental facts about both we refer to \cite{GdH88}, \cite{Co93} and \cite{Ca11}. 
In this generality, Bader and Furman (\cite{BF17}, we will use the revised version \cite{BF22}) constructed and gave a detailed description of a measurable p.m.p.\@ flow $g_t$ on a probability space $(Y,\nu)$, which generalizes the classical geodesic flow on closed manifolds of negative curvature discussed above. In particular, the construction includes a measurable cocycle  $a : \RR\times Y\to \Gamma$, whose (inverse) values produce almost surely a $C$-almost-geodesic with $C$ uniformly bounded. Furthermore, the cocycle is shown to satisfy the key property of weak-mixing, see Definition \ref{weak-mix} below and \cite[\S 6]{BF22}. 
The natural generalizations of the results stated in Theorem \ref{neg-curv--SMB} apply to this much more general set-up. 
We now summarize the results from \cite{BF22} that we will utilize below in more detail, as follows.   
 
\begin{enumerate}
\item The diagonal action of $\Gamma$ on $\partial^2 H=(\partial H\times \partial H)\setminus \Delta(\partial H)$ is ergodic with respect to $\nu_{PS}\times \nu_{PS}$ (\cite[Thm.\@ 1.4]{BF22}), and hence so  is its equivariant factor action on $(\partial H,\nu_{PS})$. Furthermore, both actions are essentially free \cite[Thm.\@ 1.9]{BF22}. Denote by $r(\gamma,\xi)=\frac{d\gamma^{-1}\nu_{PS}}{d\nu_{PS}}(\xi)$ the Radon-Nikodym derivative of $\nu_{PS}$. 
\item  There exists a Radon measure ${\bf m}=m^{BMS}$ on $\partial^2 H$ equivalent to $\nu_{PS}\times \nu_{PS}$, which is invariant (and infinite) under the $\Gamma$-action.  
It is constructed 
via a positive measurable function $\exp (F (\xi,\xi^\prime))$ on $\partial^2 H$ satisfying 
$$r(\gamma,\xi)r(\gamma,\xi^\prime)=\frac{\exp (F(\gamma\xi,\gamma\xi^\prime))}{\exp (F(\xi,\xi^\prime))}\,,$$
so that $d{\bf m}=\exp (F(\xi,\xi^\prime))d\nu_{PS}(\xi)d\nu_{PS}(\xi^\prime)$ is a $\Gamma$-invariant measure (see \cite[Prop. 4.3 \& \S 4.C]{BF22}). 
\item If $L$ denotes the Lebesgue measure on $\RR$, there is an action of $\Gamma$ on $\partial^2 H\times \RR$,  preserving the infinite measure $\nu_{PS}\times\nu_{PS}\times L$, 
given by 
\begin{equation}\label{extension} 
\gamma\cdot (\xi,\xi^\prime,t)=\left(\gamma \xi, \gamma\xi^\prime, t+\tau(\gamma, \xi,\xi^\prime)\right )\text{\,\,,\,\,where \,\,} \tau(\gamma, \xi,\xi^\prime)=
\frac{\log r(\gamma,\xi)-\log r(\gamma,\xi^\prime)}{2\delta_\Gamma}
\end{equation} 
 with $\delta_\Gamma$ the growth exponent of the metric on $\Gamma$  (\cite[eq. (1.1)]{BF22}), see \cite[\S\S~4C, 4D \& Prop. 4.7]{BF22}.  
\item 
There exists a measurable map $\pi : \partial^2 H\times \RR\to H$ such that 
$\pi(\xi,\xi^\prime, \cdot) : \RR\to H$ is $C$-almost geodesic with limit points $\xi$ and $ \xi^\prime$, and $\pi(\xi,\xi^\prime, 0)$ belongs to a compact subset of $H$ containing the origin $o\in H$ of uniformly bounded diameter (\cite[Prop. 3.4]{BF22}). 
\item  The action of $\Gamma$ on  $\partial^2 H\times \RR$ 
  is measure-theoretically smooth and has a finite-measure pre-compact fundamental  domain, denoted $\cD_\Gamma=\cD$   
(\cite[Thm.\@ 1.13]{BF22}). Normalizing the measure of the fundamental domain $\cD$ for the action of $\Gamma$ to be $1$,
an invariant Radon measure on $\partial^2 H$ equivalent to $\nu_{PS}\times \nu_{PS}$ is determined uniquely and denoted 
${\bf m}$.  
\item The action of $\Gamma$ on $(\partial^2 H,\nu_{PS}\times \nu_{PS})$ is weak-mixing (\cite[Cor. 1.10]{BF22}, see Definition \ref{weak-mix} below), and hence the action of $\Gamma$ on $(\partial H, \nu_{PS})$ is weak-mixing as well. 
\item  The measure-preserving flow $g_t : (\xi,\xi^\prime, s)\to  (\xi,\xi^\prime, s+t)$ on the extension $(\partial^2 H\times \RR, {\bf m}\times L)$ commutes with the $\Gamma$ action and so descends to the space of $\Gamma$-orbits in the extension. This space is denoted $\Gamma\setminus \left(\partial^2 H\times \RR\right)=S^1 M_\Gamma=S^1 M$, and the resulting flow on it preserves a probability measure denoted $\mu^{BM}$.  
The probability measure $\mu^{BM}$ on $S^1 M$ arises from its identification with a pre-compact fundamental domain $\cD_\Gamma=\cD\subset \partial^2 H\times \RR$ of unit measure. 
\item The p.m.p. flow $g_t$ is called the geodesic flow, and it is ergodic \cite[Cor. 1.6]{BF22}, but may fail to be weak-mixing (see the discussion following Thm.\@ 1.13 in \cite{BF22}).
\item  The $\Gamma$-fundamental domain $\cD\subset \partial^2H \times \RR$ gives rise to a measurable map $a_\cD : \RR\times \cD\to  \Gamma$ via 
$$a_\cD(g_t, (\xi,\xi^\prime,s))=\gamma \text{\,\,\,\,       if and only if     \,\,\,\,} g_t (\xi,\xi^\prime, s) \in \gamma^{-1}\cD\,.$$ 
Under the identification of $\cD$ and the space $S^1 M$ of $\Gamma$-orbits in $\partial^2H \times \RR$, $a_\cD$ gives rise to a cocycle for the $\RR$-action on $(S^1 M,\mu^{BM})$ given by the geodesic flow (see \cite[Lem.~4.10 \& Rem. 5.2]{BF22}).  
\item Denote the cocycle values by $a_\cD(t,(\xi,\xi^\prime,s))=\gamma_{t, (\xi,\xi^\prime,s)}$, for $\bf{m}$-almost every $(\xi,\xi^\prime,s)\in \cD$, and $t\in \RR$. Then the distances
$$d_H\left(\gamma^{-1}_{t, (\xi,\xi^\prime,s)}\cdot o, \pi(\xi,\xi^\prime,t)\right)$$
are globally bounded (\cite[Lem. 4.10]{BF22}). 
\item Thus $  \gamma^{-1}_{t, (\xi,\xi^\prime,s)} $ translates $o$ a distance $t+O(1)$ along an almost geodesic line 
$\pi(\xi,\xi^\prime,t)$ connecting $\xi^\prime$ and $\xi$. Equivalently, the cocycle values 
$\set{a_\cD(g_t,(\xi,\xi^\prime,s))^{-1}\,;\, t \in \RR}$ constitute a $C$-almost geodesic connecting $\xi^\prime$ and $\xi$. In particular $\abs{\abs{\gamma^{-1}_{t,(\xi,\xi^\prime,s)}}-\abs{t}}\le C$ (for a suitable fixed $C$), and 
$$\lim_{t\to \infty}\gamma^{-1}_{t,(\xi,\xi^\prime,s)}\cdot  o=\xi^\prime\,\,\,,\,\,\, \lim_{t\to -\infty}\gamma^{-1}_{t,(\xi,\xi^\prime,s)}\cdot  o=\xi$$
for almost every $(\xi, \xi^\prime, s)\in \cD$ (see \cite[Lem. 4.10]{BF22}). 
\item The fact that $\abs{\gamma^{-1}_{t,(\xi,\xi^\prime,s)}}=\abs{t}+O(1)$ implies that the geodesic flow is essentially free : the  stability group of almost every point is finite cyclic subgroup of $\RR$ and hence trivial. Furthermore, the  cocycle $a_\cD: \RR \times S^1 M \to \Gamma$ is a weak-mixing cocycle (see \cite[Cor. 1.16]{BF22}, see Def. \ref{weak-mix} below).

\end{enumerate}

As noted in item (8) above, the geodesic flow in the present general context is always ergodic but not always strong mixing, and in fact it may have a non-trivial Kronecker factor and fail to be weak-mixing. In general, whenever a p.m.p. $\RR$-action is strong mixing, clearly for any $t\neq 0$ the action restricted to the subgroup $\ZZ\cdot t$ is ergodic. Otherwise, there may be non-zero exceptional elements $0\neq s\in \RR$ where $\ZZ\cdot s$ is not ergodic. Nevertheless, the set $\cE$ of such exceptional elements is a null set, under the assumption that the original $\RR$-action is ergodic. 

 Let $(X,\mu)$ be an ergodic p.m.p. action of a hyperbolic group $\Gamma$, and consider the flow $g_t^X$ on the skew extension $(S^1 M\times X, \mu^{BM}\times \mu)$. By item (12), the fact that the geodesic flow is weak-mixing implies that the extened flow is ergodic, and we then fix a non-exceptional element $t_0\in \RR\setminus \cE$ so that $\ZZ\cdot t_0$ is also ergodic. Let us denote the isometry $\gamma_{t_0}$ by $S$. Define $\alpha(y, S^n y)=a_\cD(\xi,\xi^\prime, nt_0)^{-1}$  for each $y=(\xi,\xi^\prime, nt_0)$ ($y\in S^1M$), and define the measurable family of finite sets: 
 
$$F_n(y)=\set{\alpha(y, S^k y)\,;\, 0 \le k \le  n}, \,\,\,\,\,\,\, F^1_n(y)=F_n(y)\setminus \{e\}\,. $$
By item (11) for almost every $y$ the finite sequence just defined is a $C$-almost geodesic, and we consider the (one sided) $C$-almost geodesic $F^+(y)=\cup_{n \ge 0} F_n(y)$. Again, we suppress $t_0$ from the notation, but note that the value of $C$ depends on it. 

We define, as above, 
\[\lim_{n \to \infty}\int_{S^1 M} \frac{1}{n+1} H\left( \cP^{F_n(y)}\right)d\mu^{BM}(y)= \mathfrak{h}_\mathcal{P}(\alpha, X)\,.\]

We can now formulate : 

\begin{Theorem}\label{SMB-hyp-gps} 
Keeping the notation introduced above, let $(X,\mu)$ be an ergodic p.m.p. action of a hyperbolic group $\Gamma$, and $\cP$ be a  countable partition $\cP$ of $X$ with $H(\cP)< \infty$. 
Then for $\mu^{\text{BM}}$-almost every $y\in S^1 M$ and $\mu$-almost every $x\in X$, the following limit exists along the $C$-almost-geodesic segments $F^+(y)$:
\[  \lim_{n \to \infty} \frac{1}{n+1} \mathcal{J}\left( \cP^{F_n(y)}\right)(x).\]
The value of the limit is $\mu^{\text{BM}}\times \mu$-almost surely the constant $\mathfrak{h}_\mathcal{P}(\alpha, X)$. It is also given by the conditional entropy 
\[
H\left(\cP \,|\, \bigvee_{i=1}^\infty P^{F^1_i(y)}\right)=\mathfrak{h}_\mathcal{P}(\alpha, X)\,.
\]

Convergence holds also in the $L^1(S^1 M\times X, \mu^{\text{BM}}\times\mu)$-norm, and in particular, for $\mu^{\text{BM}}$-almost  $y\in S^1 M$ 
\[  \lim_{n \to \infty} \frac{1}{n+1} H\left( \cP^{F_n(y)}\right)
= \mathfrak{h}_\mathcal{P}(\alpha, X)\,.\]
Finally, the law of $F^+(y)$ belongs to the Patterson-Sullivan measure class on $\partial H$. 
\end{Theorem}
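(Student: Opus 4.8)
The plan is to deduce Theorem~\ref{SMB-hyp-gps} from Breiman's classical proof of the Shannon--McMillan--Breiman theorem, applied to a suitable skew product over the geodesic flow; in this way all of the genuinely hyperbolic--geometric input is exactly the description of the Bader--Furman flow and cocycle recalled in items (1)--(12) above, and what remains is a routine adaptation of the classical dynamical arguments.

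\emph{Setting up the skew product.} First I would record the recursive structure of the refined partitions. Writing $c(y)=\alpha(y,Sy)=a_\cD(t_0,y)^{-1}$, the cocycle identity for $a_\cD$ gives $\alpha(y,S^ky)=c(y)c(Sy)\cdots c(S^{k-1}y)$, hence $\cP^{F_n(y)}=\cP\vee c(y)\cdot\cP^{F_{n-1}(Sy)}$ with $\cP^{F_0(y)}=\cP$. Next I would introduce the transformation $\Theta(y,x)=(Sy,\,c(y)^{-1}x)$ on $(Z,m):=(S^1M\times X,\mu^{BM}\times\mu)$: since $S=g_{t_0}$ preserves $\mu^{BM}$ and each $c(y)\in\Gamma$ acts on $(X,\mu)$ by a p.m.p.\ transformation, $\Theta$ is an invertible p.m.p.\ transformation of $(Z,m)$, with $\Theta^k(y,x)=(S^ky,\,\alpha(y,S^ky)^{-1}x)$, and it is exactly the time-$t_0$ map of the extended flow $g^X_t$. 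Because the cocycle $a_\cD$ is weak-mixing (item (12)), Zimmer's criterion makes $g^X_t$ ergodic on $(Z,m)$, and since $t_0$ was chosen outside the null set $\cE$ of exceptional times, $\Theta$ itself, and a fortiori $S$ on $(S^1M,\mu^{BM})$, is ergodic. Securing this ergodicity is the one genuinely external ingredient, and I expect it to be the only real obstacle; everything downstream is classical.

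\emph{Telescoping and Breiman's averaging argument.} Since each $c(y)$ is measure preserving, $\cI_{c(y)\cdot\cP^{F_{n-1}(Sy)}}(x)=\cI_{\cP^{F_{n-1}(Sy)}}(c(y)^{-1}x)=\phi_{n-1}(\Theta(y,x))$ with $\phi_n(y,x):=\cI_{\cP^{F_n(y)}}(x)$; combined with $\cI_{\cP\vee\cR}=\cI_{\cR}+\cI_{\cP\mid\cR}$ and the identity above this gives $\phi_n=\phi_{n-1}\circ\Theta+f_n$, where $f_n(y,x):=\cI_{\cP\,\mid\,c(y)\cdot\cP^{F_{n-1}(Sy)}}(x)$, and iterating, $\phi_n=\phi_0\circ\Theta^n+\sum_{k=0}^{n-1}f_{n-k}\circ\Theta^k$. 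Here $\phi_0=\cI_\cP\circ\pi_X\in L^1(m)$, so $\tfrac1n\,\phi_0\circ\Theta^n\to0$ $m$-a.e. The conditioning algebras $c(y)\cdot\cP^{F_{n-1}(Sy)}$ increase to $\bigvee_{i\ge1}\cP^{F^1_i(y)}$, so by the increasing martingale theorem for conditional information functions $f_n\to f_\infty:=\cI_{\cP\,\mid\,\bigvee_{i\ge1}\cP^{F^1_i(y)}}$ $m$-a.e.\ and, by dominated convergence, in $L^1(m)$; and $H(\cP)<\infty$ gives $f^*:=\sup_n f_n\in L^1(m)$ by the maximal inequality for conditional information functions (Chung's lemma). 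The combination ``convergent plus dominated'' is exactly what Breiman's Ces\`aro argument needs: splitting the sum at a cutoff $N$, the main part is controlled by $\tfrac1n\sum_{k=0}^{n-1}\sup_{m\ge N}|f_m-f_\infty|\circ\Theta^k\to\int\sup_{m\ge N}|f_m-f_\infty|\,dm$, which tends to $0$ as $N\to\infty$; the $O(N)$ boundary terms tend to $0$ since $f^*\in L^1(m)$; and Birkhoff's theorem with ergodicity of $\Theta$ gives $\tfrac1n\sum_{k=0}^{n-1}f_\infty\circ\Theta^k\to\int_Z f_\infty\,dm$. Hence $\tfrac1{n+1}\phi_n\to\int_Z f_\infty\,dm$ $m$-a.e., which is the asserted pointwise convergence and the $\mu^{BM}\times\mu$-almost sure constancy of the limit.

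\emph{The constant, $L^1$-convergence, and the remaining assertions.} Averaging the telescoping identity over $(Z,m)$ and using that $\Theta$ is p.m.p.\ gives $\tfrac1{n+1}\int_Z\phi_n\,dm=\tfrac1{n+1}\bigl(H(\cP)+\sum_{j=1}^n\int_Z f_j\,dm\bigr)\to\int_Z f_\infty\,dm$; since $\int_Z\phi_n\,dm=\int_{S^1M}H(\cP^{F_n(y)})\,d\mu^{BM}$ this identifies the constant as $\mathfrak{h}_{\cP}(\alpha,X)$, which by construction equals $H\bigl(\cP\,\mid\,\bigvee_{i\ge1}\cP^{F^1_i(y)}\bigr)$ as well. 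For $L^1(S^1M\times X)$-convergence I would use subadditivity of information under joins, $0\le\tfrac1{n+1}\phi_n\le\tfrac1{n+1}\sum_{k=0}^{n}\cI_\cP\circ\pi_X\circ\Theta^k$, whose right-hand family is uniformly integrable (it converges in $L^1(m)$ by the mean ergodic theorem), so the $m$-a.e.\ limit is the $L^1$ limit. The same recursion gives $H(\cP^{F_n(y)})=H(\cP)+\sum_{k=0}^{n-1}\psi_{n-k}(S^ky)$ with $\psi_m(y):=H\bigl(\cP\,\mid\,c(y)\cdot\cP^{F_{m-1}(Sy)}\bigr)$, and now the conditional \emph{entropies} genuinely decrease, $0\le\psi_m\downarrow\psi_\infty\le H(\cP)$, so the same Ces\`aro argument over the ergodic transformation $S$ on $(S^1M,\mu^{BM})$ yields $\tfrac1{n+1}H(\cP^{F_n(y)})\to\mathfrak{h}_{\cP}(\alpha,X)$ for $\mu^{BM}$-a.e.\ $y$. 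Finally, by items (4), (5) and (11), $\alpha(y,S^ny)\cdot o=\gamma^{-1}_{nt_0,y}\cdot o$ converges, for $\mathbf m$-a.e.\ $y=(\xi,\xi',s)\in\cD$, to the second boundary coordinate $\xi'$ (with $t_0>0$), so the endpoint $F^+(y)$ of the one-sided almost geodesic $\cup_n F_n(y)$ has law the pushforward to the second factor of $\mu^{BM}\cong\mathbf m$, which is equivalent to $\nu_{PS}\times\nu_{PS}$ on $\partial^2H$; hence the law of $F^+(y)$ lies in the Patterson--Sullivan class on $\partial H$.
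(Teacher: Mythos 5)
Your proposal is correct and follows essentially the same route as the paper: you re-derive inline the general skew-extension Shannon--McMillan--Breiman theorem (the paper's Theorem~\ref{thm:SMBmain}) via the same telescoping identity, martingale convergence for the conditional information functions, maximal inequality, and Birkhoff/Ces\`aro argument, and then specialize using the same Bader--Furman ingredients — weak-mixing of the cocycle, choosing $t_0$ non-exceptional, and the identification of $\mu^{BM}$ with a measure on a fundamental domain in $\partial^2 H\times\RR$ equivalent to $\nu_{PS}\times\nu_{PS}\times L$. The only cosmetic differences are that you peel off the boundary term $\phi_0\circ\Theta^n$ separately and obtain $L^1$-convergence via a uniform integrability/subadditivity bound rather than the paper's term-by-term $L^1$ estimate; neither changes the substance.
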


\subsection{ Rokhlin entropy, finitary entropy and orbital entropy}
Theorems \ref{free-gps-SMB}, \ref{neg-curv--SMB}, \ref{lattice-sbgs} and \ref{SMB-hyp-gps} raise two fundamental questions. 
\begin{itemize}
\item Defining the expression $\mathfrak{h}_\mathcal{P}(\alpha, X)$ requires extensive machinery, including  the auxiliary p.m.p. dynamical system $(Y, \nu, S)$, the cocycle $\alpha$, and the finite sets $F_n(y)$. Is there an underlying fundamental isomorphism-invariant of the dynamical system $(X,\mu,\Gamma)$ that is independent of these choices?  
\item Is the almost-sure limit expression in the Shannon-MacMillan-Breiman theorem along the sets $\set{F_n(y)\,;\, y\in Y}\subset \text{Fin }(\Gamma)$, namely 
$$ \lim_{n \to \infty} \frac{1}{n+1} \mathcal{J}\left( \cP^{F_n(y)}\right)(x)=\mathfrak{h}_\mathcal{P}(\alpha, X)=H\left(\cP \,|\, \bigvee_{i=1}^\infty \cP^{F^1_i(y)}\right)\, ,$$
related to other notions of entropy for actions of (non-amenable) groups?

\end{itemize}
Both of these questions have definitive and remarkable answers, which rely on a deep result of B. Seward.  First, recall that Seward defines:

\begin{Definition}[Rokhlin entropy]
Let $\Gamma \curvearrowright (X,\mu)$ be a p.m.p.\@ ergodic group action. Then, the number
\[
h^{\operatorname{Rok}}(\Gamma \curvearrowright X) := \inf\big\{ H(\cP)\,|\, 
\cP \mbox{ countable, generating partition}\big\}
\]
is called the {\em Rokhlin entropy} of the group action. 
\end{Definition}
The notion of Rokhlin entropy has its origin in Rokhlin's study of entropy for a single transformation, and Seward's definition above constitutes a far-reaching generalization of the classical case.  This notion of entropy was studied intensively
in the context of general measure-preserving group actions by Seward in the series of papers \cite{Se15a, Se15b, Se16}. 

Seward and Tucker-Drob showed in \cite{ST12}
that for free ergodic actions of amenable groups, Rokhlin entropy coincides
with the classical Kolmogorov-Sinai entropy, generalizing Rokhlin's classical result for a single p.m.p. transformation. 
In an important breakthrough  Seward \cite{Se16} established the following remarkable upper bound for the Rokhlin entropy for every countable, generating partition $\cP$, assuming the $\Gamma$ action on $X$ is ergodic and essentially free. 
\vskip0.1in
\noindent{\bf Seward's inequality.} Assume that $\Gamma \curvearrowright (X,\mu)$ is ergodic and free. Then
\begin{eqnarray}\label{thm:seward}
h^{\operatorname{Rok}}\big( \Gamma \curvearrowright X \big) \leq 
\inf_{F \in \text{Fin}(\Gamma)} \frac{1}{|F|}\, H\Big( \bigvee_{\gamma \in F} \gamma \cP \Big).
\end{eqnarray}
(In fact, Theorem~1.5 in \cite{Se16} establishes a stronger statement
involving entropy conditioned on $\Gamma$-invariant $\sigma$-algebras.)

By the standard subadditivity property of Shannon entropy,  
the right hand side of (\ref{thm:seward}) is bounded from above by $H(\cP)$. 
Hence, by passing to the infimum over all generating partitions, we obtain 
in fact equality of the lower and the upper bound.  Therefore, as noted in \cite{NP17}, it is natural to define the {\it finitary entropy} for ergodic actions of $\Gamma$ on $(X,\mu)$.

\begin{Definition}[Finitary entropy]
\[
h^{\operatorname{fin}}\big( \Gamma \curvearrowright X \big)
:= \inf\Big\{ \inf_{F \in \text{Fin}(\Gamma)} H\Big( \bigvee_{\gamma \in F} \gamma\cP \Big) /|F| \, \,\Big|\, 
\cP \mbox{ countable, generating partition}\Big\}.
\]
\end{Definition}
In the set-up of free ergodic actions, the finitary entropy clearly satisfies, using Seward's inequality (\ref{thm:seward}),  that  
 $h^{\operatorname{fin}}\big( \Gamma \curvearrowright X \big)=h^{\operatorname{Rok}}(\Gamma \curvearrowright X)$. 
This consequence of Seward's inequality is crucial to our approach, which is based on constructing a collection of {\it variable finite subsets} $\cF_n(y)_{y\in Y}\in \text{Fin}(\Gamma)$, for some auxiliary dynamical system defined on $Y$. Indeed, suppose that some collection of variable finite subsets constructed according to some method has the property that the 
information functions of a partition of $X$, when refined along these finite sets, actually converge to a constant limit. Then by Seward's inequality that 
limit is bounded from below by the Rokhlin entropy of the space,  and when we take the infimum over generating partition, equality will hold between the infimum of these limits and the Rokhlin entropy. 

In \cite{NP17} we considered one approach to construct such sets, using finite sets 
$\cR_n(y)$ which are equivalence classes of a equivalence relation $\cR_n$, where the sequence of these relations constitutes a hyper-finite exhaustion of a suitable p.m.p. amenable ergodic equivalence relation, which admits a suitable cocycle into $\Gamma$. 
In the present paper we focus on the amenable ergodic equivalence relation generated by the orbits of a single p.m.p. ergodic transformation $ S : Y \to Y$, when the dynamical system is equipped with a suitable  $\Gamma$-valued  cocycle $\alpha$ defined on the orbit relation generated by $S$. 
We can now define 
\begin{Definition}\label{relative-entropy-def}

Define the {\em orbital entropy of the partition $\cP$ of $X$ w.r.t. to the cocycle $\alpha$}  by: 
\[
	\mathfrak{h}_\cP(\alpha,X) = \lim_{n \to \infty} \frac{1}{n+1}\int_Y \int_X \mathcal{J}\Big( \bigvee_{j=0}^n \alpha(y, S^j y)\cP \Big)(x)d\mu(x)d\nu(y)\]
	\[ = \lim_{n\to \infty} \frac{1}{n+1}\int_Y H \Big( \bigvee_{j=0}^n \alpha(y, S^j y)\cP \Big)d\nu(y)\,.
	\]
	\end{Definition}
	This expression was originally introduced by Abramov-Rokhlin \cite{AR66} in the context of skew-extensions, see the discussion in \S\S~\ref{sec:amenableeqrel}, \ref{sec:notionsentropy} below. The existence of the above limits is always guaranteed in the situations we are interested in, cf.\@ Proposition~\ref{prop:reporbitalentropy} below.
	
	\medskip

In order to relate the values of $\mathfrak{h}_\cP(\alpha,X)$ to the Rokhlin entropy, we must introduce some further assumptions.  

\medskip

{\bf Special Assumptions:}  
\begin{itemize}
\item $S$ is 
an essentially free invertible and ergodic p.m.p. transformation on a standard probability space $(Y, \nu)$, 
\item  $\alpha$ is a class injective cocycle 
$\alpha: \cO_S \to \Gamma$ defined on the orbit relation $\cO_S$, 
\item the skew extension (see \S~\ref{sec:amenableeqrel} below) $T$ of $(Y,\nu)$ by $(X,\mu)$ defined by $\alpha$ is ergodic. 
\end{itemize} 

	Define the {\em orbital Rokhlin entropy of $(X,\mu)$ w.r.t. the cocycle  $\alpha$} by: 
\begin{equation}\label{def-ent}
 \mathfrak{h}^{orb}(\alpha, X)=\inf\set{\mathfrak{h}_\mathcal{P}(\alpha, X)\,;\, \cP \text{ a generating partition of }X}\,.
\end{equation}

Assuming now in addition that the action of $\Gamma$ on $X$ is free one can use Seward's inequality  to conclude that the foregoing definition of 
entropy is equal to the Rokhlin entropy as well, as follows.

\begin{Proposition} \label{thm:ENT}
Under the freeness, injectivity and ergodicity assumptions just stated, the following holds:

\begin{equation}\label{equality-entropies} h^{\operatorname{fin}}\big( \Gamma \curvearrowright X \big)=\mathfrak{h}^{orb}(\alpha, X) = h^{\operatorname{Rok}}\big(\Gamma \curvearrowright X\big).\end{equation}

In particular, the value of $\mathfrak{h}^{orb}(\alpha, X)$ is independent of choice of the dynamical system $(Y,\nu, S)$ and the cocycle $\alpha :\mathcal{O}_S \to \Gamma$ used to define it, as long as they satisfy the assumptions stated above. Namely for any such choice, $\mathfrak{h}^{orb}(\alpha, X)$ is equal to the Rokhlin entropy of $\Gamma \curvearrowright X$. 

\end{Proposition}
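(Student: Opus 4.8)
The plan is to establish the three quantities in \eqref{equality-entropies} are equal by proving a chain of inequalities. The first equality $h^{\operatorname{fin}}(\Gamma \curvearrowright X) = h^{\operatorname{Rok}}(\Gamma \curvearrowright X)$ has already been observed above as a direct consequence of Seward's inequality \eqref{thm:seward} together with subadditivity of Shannon entropy, so the real content is the middle equality $\mathfrak{h}^{orb}(\alpha, X) = h^{\operatorname{fin}}(\Gamma \curvearrowright X)$. I would prove this by two opposite inequalities.

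First I would show $\mathfrak{h}^{orb}(\alpha, X) \geq h^{\operatorname{Rok}}(\Gamma \curvearrowright X)$. Fix any generating partition $\cP$ of $X$. For each $n$ and $\nu$-almost every $y$, the set $F_n(y) = \{\alpha(y, S^j y)\,;\, 0 \le j \le n\}$ is a finite subset of $\Gamma$, so Seward's inequality \eqref{thm:seward} applied with $F = F_n(y)$ gives $h^{\operatorname{Rok}}(\Gamma \curvearrowright X) \leq \frac{1}{|F_n(y)|} H\big(\bigvee_{\gamma \in F_n(y)} \gamma \cP\big)$, using here the freeness hypothesis which is exactly what Seward's inequality requires. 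Since $|F_n(y)| \le n+1$, we get $h^{\operatorname{Rok}}(\Gamma \curvearrowright X) \le \frac{1}{n+1} H(\cP^{F_n(y)})$ for a.e.\ $y$; integrating over $Y$ and passing to the limit $n \to \infty$ yields $h^{\operatorname{Rok}}(\Gamma \curvearrowright X) \le \mathfrak{h}_\cP(\alpha, X)$ by Definition \ref{relative-entropy-def}. Taking the infimum over generating partitions $\cP$ gives $h^{\operatorname{Rok}}(\Gamma \curvearrowright X) \le \mathfrak{h}^{orb}(\alpha, X)$. A small point to be careful about: $|F_n(y)|$ may be strictly smaller than $n+1$ (the cocycle values need not be distinct), so one should either work with $|F_n(y)|$ directly or note that since the values form a $C$-almost-geodesic the cardinality is comparable to $n$; in any case $\frac{1}{n+1} \le \frac{1}{|F_n(y)|}$ preserves the direction of the inequality, so no harm is done.

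For the reverse inequality $\mathfrak{h}^{orb}(\alpha, X) \le h^{\operatorname{fin}}(\Gamma \curvearrowright X)$, the idea is that the orbital entropy along a single transformation $S$ is dominated by the finitary entropy: given a generating partition $\cP$, the sequence $n \mapsto \int_Y H(\cP^{F_n(y)})\, d\nu(y)$ is subadditive in the appropriate sense (this is precisely the content of Proposition \ref{prop:reporbitalentropy}, which guarantees the limit exists), and the normalized limit $\mathfrak{h}_\cP(\alpha, X)$ is an infimum, hence is bounded above by $\frac{1}{|F|} H(\bigvee_{\gamma \in F} \gamma \cP)$ for a well-chosen fixed finite set $F$ that can be realized (up to translation, using the cocycle identity and the class-injectivity of $\alpha$) along the $S$-orbits. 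Concretely, class-injectivity of $\alpha$ ensures that the map $j \mapsto \alpha(y, S^j y)$ is genuinely $\Gamma$-valued without collapsing, so that refining along the orbit is at least as efficient as refining along any single fixed finite set; taking infima over $\cP$ then gives $\mathfrak{h}^{orb}(\alpha, X) \le h^{\operatorname{fin}}(\Gamma \curvearrowright X)$. Combined with the first inequality and the already-noted $h^{\operatorname{fin}} = h^{\operatorname{Rok}}$, all three quantities coincide, and the independence of $\mathfrak{h}^{orb}(\alpha, X)$ from the auxiliary data $(Y,\nu,S)$ and $\alpha$ is then immediate since it has been identified with the intrinsic invariant $h^{\operatorname{Rok}}(\Gamma \curvearrowright X)$.

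The main obstacle I anticipate is the reverse inequality $\mathfrak{h}^{orb}(\alpha, X) \le h^{\operatorname{fin}}(\Gamma \curvearrowright X)$: making precise the sense in which refining along the $S$-orbit of a point "contains" refining along an arbitrary fixed finite set $F \subset \Gamma$ requires using ergodicity of $S$ (to return to translates of $F$ along orbits) together with the cocycle relation and class-injectivity in a quantitative way, and one must control the integral over $Y$ rather than work pointwise. This is where the Abramov–Rokhlin skew-extension formalism and the ergodicity of the skew extension $T$ enter; the subadditivity statement of Proposition \ref{prop:reporbitalentropy} should do most of the work, but verifying that its limiting value is bounded by the finitary quantity — rather than merely by $H(\cP)$ — is the delicate step and is exactly where the three Special Assumptions (freeness, class-injectivity, ergodicity of the skew extension) are all used.
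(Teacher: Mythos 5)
Your high-level strategy (squeeze $\mathfrak{h}^{orb}$ between $h^{\operatorname{Rok}}$ and $h^{\operatorname{fin}}$, using Seward's inequality for one side) is the right one, and your first inequality is essentially correct, but there are two problems worth pointing out.

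First, a genuine error in the cardinality step of your lower bound. You write that since $|F_n(y)| \le n+1$, the inequality $h^{\operatorname{Rok}} \le \frac{1}{|F_n(y)|} H(\cP^{F_n(y)})$ implies $h^{\operatorname{Rok}} \le \frac{1}{n+1} H(\cP^{F_n(y)})$, reasoning that ``$\frac{1}{n+1} \le \frac{1}{|F_n(y)|}$ preserves the direction.'' This is backwards: if $|F_n(y)| < n+1$ then $\frac{1}{n+1} H < \frac{1}{|F_n(y)|} H$, so an upper bound by the larger quantity does not give an upper bound by the smaller one. What actually resolves this is the class-injectivity assumption on $\alpha$, combined with the essential freeness of $S$: together they force $\alpha(y, S^j y)$, $0 \le j \le n$, to be pairwise distinct for $\nu$-a.e.\ $y$, so that $|F_n(y)| = n+1$ exactly. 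This is precisely how the paper uses class-injectivity, and it is the observation you should lead with rather than a repair to an incorrect inequality.

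Second, and more substantially, your proposed route for the reverse inequality $\mathfrak{h}^{orb}(\alpha,X) \le h^{\operatorname{fin}}(\Gamma \curvearrowright X)$ is both more complicated than necessary and incompletely specified. You describe a return-time/approximation argument (realizing arbitrary finite $F \subset \Gamma$ along $S$-orbits, invoking ergodicity of the skew extension), and you flag it yourself as the main obstacle. But no such argument is needed. The paper closes the circle in one line: by subadditivity of Shannon entropy, $\mathfrak{h}_\cP(\alpha,X) \le H(\cP)$ for every countable partition $\cP$ of finite entropy (this is the right-hand bound in the very chain $\inf_F \frac{1}{|F|} H(\cP^F) \le \frac{1}{n+1} H(\cP^{F_n(y)}) \le H(\cP)$, integrated over $Y$ and passed to the limit using Proposition \ref{prop:reporbitalentropy}). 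Taking the infimum over generating partitions then gives $\mathfrak{h}^{orb}(\alpha,X) \le h^{\operatorname{Rok}}(\Gamma \curvearrowright X) = h^{\operatorname{fin}}(\Gamma \curvearrowright X)$, which combined with your correct first inequality yields all three equalities. In other words the ``hard direction'' you anticipated simply is not there; the whole proposition is a sandwich obtained from Seward's inequality on one side and the trivial bound $\mathfrak{h}_\cP \le H(\cP)$ on the other, with the equality $h^{\operatorname{Rok}} = h^{\operatorname{fin}}$ (already noted in the text preceding the proposition) closing the loop.
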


\begin{proof}
Let $\cP$ be a countable partition with finite Shannon entropy. Since the 
cocycle $\alpha$ is class injective,  using subadditivity of $H$ we obtain 
\begin{eqnarray*}
\inf_{F \in \text{Fin}(\Gamma)} \frac{1}{|F|}\,H\Big( \bigvee_{\gamma \in F} \gamma\cP \Big)
\leq  \frac{1}{n+1} H\Big( \bigvee_{j=0}^n \alpha(y,S^j y) \cP \Big) \leq H\big( \cP \big)\,,
\end{eqnarray*}
for every $n \in \NN$ and for $\nu$-almost every $y \in Y$.
Note that these inequalities remain valid even if $H(\cP) = \infty$.
In the case that there exist generating partitions with finite Shannon 
entropy, we 
integrate over $Y$ and pass to the limit as $n\to \infty$. Using Proposition \ref{prop:reporbitalentropy} we conclude that $ h^{\operatorname{fin}}(\Gamma \curvearrowright X)\le \mathfrak{h}_\cP(\alpha,X)\le H(\cP)$. 
Note that this inequality holds independently of the cocycle $\alpha$, provided it satisfies the conditions stated. Now taking the infimum over all generating partitions,  since by the discussion above 
$h^{\operatorname{Rok}}(\Gamma \curvearrowright X) = h^{\operatorname{fin}}(\Gamma \curvearrowright X)$, the previous inequality  implies equality of all three notions of entropy. 
\end{proof}

Another significant consequence of this discussion is the following general result.
\begin{Theorem}\label{thm-all-groups}
Assume $\Gamma$ is any countable group, and $\Gamma$ acts ergodically and essentially freely on $(X,\mu)$ preserving the probability measure $\mu$.  
 Assume that the Rokhlin entropy of $(X,\mu)$ is finite.  
Assume there exists a dynamical system $(Y,S,\nu)$ and a cocycle $\alpha$ satisfying the conditions stated before Proposition \ref{thm:ENT}. Then for every $\epsilon > 0$ there exists a generating partition $\cP_\epsilon$, such that the Rokhlin entropy has the following (almost sure) estimate  
$$h^{\operatorname{Rok}}(\Gamma \curvearrowright X)\le  \lim_{n \to \infty} \frac{1}{n+1} \mathcal{J}\left( \cP_\epsilon^{F_n(y)}\right)(x)=H\left(\cP_\epsilon \,|\, \bigvee_{i=1}^\infty \cP_\epsilon^{F^1_i(y)}\right)\,\le h^{\operatorname{Rok}}(\Gamma \curvearrowright X)+\epsilon\,.$$
\end{Theorem}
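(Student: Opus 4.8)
The plan is to obtain the statement by assembling three results already in place: Proposition~\ref{thm:ENT}, which gives $h^{\operatorname{fin}}(\Gamma\curvearrowright X)=\mathfrak{h}^{orb}(\alpha,X)=h^{\operatorname{Rok}}(\Gamma\curvearrowright X)$ (with $\mathfrak{h}^{orb}(\alpha,X)=\inf\{\mathfrak{h}_\cP(\alpha,X):\cP\text{ generating}\}$, cf.\ (\ref{def-ent})); Seward's inequality~(\ref{thm:seward}); and the pointwise Shannon--McMillan--Breiman theorem for refinements along the variable sets $F_n(y)$ established above, which asserts --- for any countable partition $\cP$ with $H(\cP)<\infty$ and under the Special Assumptions on $(Y,\nu,S)$ and $\alpha$ --- that $\tfrac{1}{n+1}\mathcal{J}(\cP^{F_n(y)})(x)$ converges, for $\nu\times\mu$-almost every $(y,x)$, to the constant $\mathfrak{h}_\cP(\alpha,X)=H(\cP\,|\,\bigvee_{i\ge1}\cP^{F^1_i(y)})$.

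First I would choose the generating partition. Since $h^{\operatorname{Rok}}(\Gamma\curvearrowright X)=\inf\{H(\cP):\cP\text{ generating}\}$ is finite by hypothesis, there exist generating partitions whose Shannon entropy is arbitrarily close to $h^{\operatorname{Rok}}(\Gamma\curvearrowright X)$, and in particular finite. For any generating partition $\cP$ with $H(\cP)<\infty$, the argument in the proof of Proposition~\ref{thm:ENT} --- class injectivity of $\alpha$ and subadditivity of $H$, then integration over $Y$ and passage to the limit, together with $h^{\operatorname{fin}}=h^{\operatorname{Rok}}$ --- yields
$$h^{\operatorname{Rok}}(\Gamma\curvearrowright X)\le\mathfrak{h}_\cP(\alpha,X)\le H(\cP).$$
Hence $\inf\{\mathfrak{h}_\cP(\alpha,X):\cP\text{ generating},\ H(\cP)<\infty\}=h^{\operatorname{Rok}}(\Gamma\curvearrowright X)$, so for a given $\epsilon>0$ I fix a generating partition $\cP_\epsilon$ with $H(\cP_\epsilon)<\infty$ and $\mathfrak{h}_{\cP_\epsilon}(\alpha,X)\le h^{\operatorname{Rok}}(\Gamma\curvearrowright X)+\epsilon$.

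Next I would apply the pointwise theorem to this $\cP_\epsilon$. Since $H(\cP_\epsilon)<\infty$ and the Special Assumptions hold, for $\nu\times\mu$-almost every $(y,x)$ the limit exists and
$$\lim_{n\to\infty}\frac{1}{n+1}\mathcal{J}\left(\cP_\epsilon^{F_n(y)}\right)(x)=\mathfrak{h}_{\cP_\epsilon}(\alpha,X)=H\left(\cP_\epsilon\,|\,\bigvee_{i=1}^\infty\cP_\epsilon^{F^1_i(y)}\right).$$
The upper estimate $\mathfrak{h}_{\cP_\epsilon}(\alpha,X)\le h^{\operatorname{Rok}}(\Gamma\curvearrowright X)+\epsilon$ is the choice of $\cP_\epsilon$; the lower estimate $\mathfrak{h}_{\cP_\epsilon}(\alpha,X)\ge h^{\operatorname{Rok}}(\Gamma\curvearrowright X)$ is the displayed chain above, which rests on Seward's inequality~(\ref{thm:seward}): essential freeness of $S$ together with class injectivity of $\alpha$ force $|F_n(y)|=n+1$ for $\nu$-almost every $y$, whence $h^{\operatorname{Rok}}(\Gamma\curvearrowright X)\le\frac{1}{n+1}H(\cP_\epsilon^{F_n(y)})$ for every $n$, and integrating over $Y$ and letting $n\to\infty$ (by Definition~\ref{relative-entropy-def} of $\mathfrak{h}_{\cP_\epsilon}(\alpha,X)$) gives the claim. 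Combining the displayed equality with these two estimates proves the asserted two-sided bound, $\nu\times\mu$-almost surely.

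I expect the only delicate point to be the choice of $\cP_\epsilon$ in the second step: one must exhibit a single generating partition that is simultaneously of finite Shannon entropy --- so that $\mathfrak{h}_{\cP_\epsilon}(\alpha,X)$ is a genuine finite constant and the pointwise theorem is applicable --- and of orbital entropy within $\epsilon$ of $h^{\operatorname{Rok}}(\Gamma\curvearrowright X)$. Finiteness of the Rokhlin entropy is precisely the hypothesis that unlocks this, since it produces generating partitions of finite Shannon entropy and, through $\mathfrak{h}_\cP(\alpha,X)\le H(\cP)$ together with Proposition~\ref{thm:ENT}, drives their orbital entropies down to $h^{\operatorname{Rok}}(\Gamma\curvearrowright X)$ in the infimum. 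Everything else is a direct collation of inequalities already available; the substantive input --- the almost-sure existence of the limit and its identification with $\mathfrak{h}_{\cP_\epsilon}(\alpha,X)$ and with the conditional entropy --- is supplied by the Shannon--McMillan--Breiman theorem proved earlier.
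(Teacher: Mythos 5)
Your proposal is correct and matches the argument the paper intends (the paper states Theorem~\ref{thm-all-groups} without a separate proof, leaving it as an assembly of Proposition~\ref{thm:ENT}, Seward's inequality~(\ref{thm:seward}), and Theorem~\ref{thm:SMBmain}, which is exactly what you do). One small simplification: you can skip the intermediate claim that $\inf\{\mathfrak{h}_\cP(\alpha,X):\cP\text{ generating},\ H(\cP)<\infty\}=h^{\operatorname{Rok}}$ and just use finiteness of $h^{\operatorname{Rok}}$ to pick a generating $\cP_\epsilon$ with $H(\cP_\epsilon)<h^{\operatorname{Rok}}(\Gamma\curvearrowright X)+\epsilon$, which is automatically finite, and then $\mathfrak{h}_{\cP_\epsilon}(\alpha,X)\le H(\cP_\epsilon)$ immediately gives the upper bound.
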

 In particular, if $\cP$ is a generating partition with finite Shannon entropy which actually realizes the Rokhlin entropy of the $\Gamma$-action, namely $h^{\operatorname{Rok}}(\Gamma \curvearrowright X)=H(\cP)$, then the Rokhlin  entropy of $(X,\mu)$ satisfies the following remarkable (almost sure) formula :
$$h^{\operatorname{Rok}}(\Gamma \curvearrowright X)= \lim_{n \to \infty} \frac{1}{n+1} \mathcal{J}\left( \cP^{F_n(y)}\right)(x)=H\left(\cP \,|\, \bigvee_{i=1}^\infty \cP^{F^1_i(y)}\right)\,.$$

\begin{Remark} {\bf  Injectivity in the geometric set-up}. We note that for every negatively-curved group, and more generally for every hyperbolic group, there exists an ergodic dynamical system $(Y,\nu, S)$ (namely the measurable geodesic flow) with the following property. There is a cocycle $\alpha$ defined on the orbit equivalence relation defined by he powers of $S$, such that the map $n \mapsto \alpha(y, S^n y)$ is injective, for almost every $y$, namely the cocycle $\alpha$ is class-injective.

Indeed, if we choose the isometry $g_{t_0}$ such that $|t_0|> 3\,\mathrm{Diam}(\cD)$, this fact follows from the construction of the cocycle $\alpha$ for the transformation $S=g_{t_0}$ detailed in \S~\ref{sec:RManifold-1} and \S~\ref{sec:hyp}.  
\end{Remark}

\subsection{Convergence of information along random subsets}

We note that an information convergence theorem for a family of {\it random finite subsets} $F_n(y)\subset \Gamma $ for {\it general} countable groups was established by Kifer \cite{Ki86}. The sets $F_n(y)$ in question constitute the trajectories of a random walk on the group $\Gamma$, and the approach is that of ergodic theory of random transformations. This result is motivated by the important classical problem of developing random ergodic theorems for random transformations  was considered originally by Kakutani \cite{Ka51}, following a suggestion by von-Neumann and Ulam \cite{UvN47}.  Entropy theory for random transformations (and more general skew products) was developed by Abramov and Rokhlin \cite{AR66} who proved a formula for their Kolmogorov-Sinai entropy, see the discussion below for further details. It was further considered also by Morita \cite{Mo86} and by Ledrappier-Young \cite{LY88}. 

Kakutani's approach to the theory of random transformations is an instance of the more general skew extension construction in ergodic theory. Namely it corresponds to the case of a skew extension specifically of the space $(\Omega, \bf{p})$ associated with a sequence of independent random variables, by the probability space $(X,\mu)$ on which $\Gamma$ acts.
Aspects of entropy for skew product extensions for actions of $\ZZ$ were studied by Abramov and Rokhlin \cite{AR66} and also by Bogenschutz-Crauel \cite{BC90, Bo92}.  Ward and Zhang  \cite{WZ92} considered the case of skew products for amenable groups. We refer to e.g. \cite[\S~6.1]{Pe83} for an account of skew products for $\ZZ$-actions and their entropy theory. 
\medskip

The results of \S~\ref{sec:SMB} below can be viewed as a generalization of the Shannon-McMillan-Breiman theorem from the classical case of random dynamical systems given by 
 independent random variable taking values in the group, to that of general group-valued cocycles. Let us therefore formulate the results for random subsets arising from trajectories of random walks on general groups.  For background information on random walks on (Cayley graph of) groups, we refer to the monograph \cite{Wo00}. Here, we employ the set-up of \cite{Ki86}.
 
 \medskip

Let $\Gamma$ be a countable group and $m$ a probability measure on $\Gamma$ whose finite support generates $\Gamma$ as a semigroup. 
Let $(\Omega,\bf{p})$ be a standard probability space on which a sequence of independent identically distributed $\Gamma$-valued random variables $Z_i : \Omega\to \Gamma$, $ i \ge 0$ with distribution $m$ are defined. Consider the random elements of $\Gamma$ arising along a trajectory of the random walk, given for $i \ge 0$ by 
$\gamma_i(\omega)= Z_i(\omega)\cdot Z_{i-1}(\omega)\cdots Z_{0}(\omega)$.  
Define the random finite subsets $F^1_n(\omega)=\set{\gamma^{-1}_i(\omega)\,;\, 0 \le i\le n}$, and 
$F_n(\omega)=F^1_n(\omega)\cup\set{e}$. 
Consider the following partitions, the refinements of $\cP$ along (the inverse) of the trajectories of the random walk governed by the distribution $m$ :  
$$\cP^{F_n(\omega)}=\cP\vee\gamma_0(\omega)^{-1}\cP\vee \gamma_1(\omega)^{-1}\cP\vee\cdots\vee\gamma_n(\omega)^{-1} \cP\,.
$$

\begin{Theorem}\label{random-SMB}(\cite{Ki86})
{\,\,\bf Shannon-McMillan-Breiman theorem for random transformations.}
Let $\Gamma$ be countable, and $(X,\mu)$ an ergodic p.m.p.\@ action of $\Gamma$. Let $m$ be a generating measure of finite support on $\Gamma$, and $Z_i(\omega) :\Omega\to \Gamma$, $i\ge 0$ a sequence of independent $\Gamma$-valued random variables with distribution $m$. Let $\cP$ be a partition of $X$ of finite Shannon-entropy, and let $\cP^{F_n(\omega)}$ be the partitions refined using the finite random subset $F_n(\omega)\subset \Gamma$, as defined in (\ref{refine}) above. 

 The normalized information functions $\frac{1}{n+1}\cI_{\cP^{F_n(\omega)}}(x)$ converge to a limit 
for $\bf{p}$-almost every random trajectory $\omega$, and $\mu$-almost $x\in X$. The limit is given by the following limit of conditional entropies, which is almost surely a constant independent of $\omega \in \Omega$ : 
$$ 
\lim_{t\to \infty}H\left(\cP \,|\, \bigvee_{i=1}^n \cP^{F_i^1(\omega)}\right)=H\left(\cP \,|\, \bigvee_{i=1}^\infty \cP^{F^1_i(\omega)}\right).$$
\end{Theorem}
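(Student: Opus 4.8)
The plan is to follow Kifer \cite{Ki86} and recast the statement as a fibered (``relativized'') Shannon--McMillan--Breiman theorem over the Bernoulli base of the random walk. Let $\sigma$ be the shift on $(\Omega,\mathbf{p})$, so $Z_i\circ\sigma=Z_{i+1}$, and form the skew product
\[
T:\Omega\times X\to\Omega\times X,\qquad T(\omega,x)=(\sigma\omega,\,Z_0(\omega)x),
\]
which preserves $\mathbf{p}\times\mu$ because $\sigma$ preserves $\mathbf{p}$ and $\Gamma$ preserves $\mu$. The single algebraic input is the fibered recursion $F^1_n(\omega)=Z_0(\omega)^{-1}F_{n-1}(\sigma\omega)$, which follows from $\gamma_i(\omega)=\gamma_{i-1}(\sigma\omega)\,Z_0(\omega)$ and gives $\cP^{F_n(\omega)}=\cP\vee Z_0(\omega)^{-1}\cP^{F_{n-1}(\sigma\omega)}$. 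Applying the chain rule for information functions together with $\mu$-invariance of $\Gamma$ yields
\[
\cI_{\cP^{F_n(\omega)}}(x)=\cI_{\cP^{F_{n-1}(\sigma\omega)}}\bigl(Z_0(\omega)x\bigr)+g_n(\omega,x),\qquad g_n(\omega,x):=\cI_{\cP\,|\,\cP^{F^1_n(\omega)}}(x),
\]
and iterating expresses $\frac{1}{n+1}\cI_{\cP^{F_n(\omega)}}(x)$ as the Birkhoff average $\frac{1}{n+1}\sum_{j=0}^{n}g_{n-j}(T^j(\omega,x))$, up to a term $\frac{1}{n+1}\cI_\cP(\pi_X(T^{n+1}(\omega,x)))$ that tends to $0$ almost everywhere, $\pi_X$ denoting projection to $X$.

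Next I would control the $g_m$. Since $\cP^{F^1_m(\omega)}$ increases in $m$, Doob's martingale theorem applied fiberwise gives $g_m(\omega,\cdot)\to g_\infty(\omega,\cdot):=\cI_{\cP\,|\,\bigvee_{i\ge1}\cP^{F^1_i(\omega)}}$ for $\mu$-a.e.\ $x$, for every $\omega$. The crucial integrability comes from Chung's maximal inequality: because $H(\cP)<\infty$, $\int_X\sup_m g_m(\omega,x)\,d\mu(x)\le H(\cP)+C$ with $C$ universal, uniformly in $\omega$; hence $\Phi:=\sup_m g_m\in L^1(\mathbf{p}\times\mu)$ and $G_N:=\sup_{m\ge N}|g_m-g_\infty|$ decreases to $0$ $\mathbf{p}\times\mu$-a.e.\ with $\int G_N\,d(\mathbf{p}\times\mu)\to0$. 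The third ingredient is ergodicity of $T$ on $(\Omega\times X,\mathbf{p}\times\mu)$: for a bounded $T$-invariant $f$, its fiber average $h(x)=\int_\Omega f(\omega,x)\,d\mathbf{p}(\omega)$ satisfies $h=\int_\Gamma h(g\cdot)\,dm(g)$, so $h$ is a fixed vector of the Markov averaging operator $\int_\Gamma U_g\,dm(g)$ on $L^2(\mu)$, hence $U_gh=h$ for $g\in\operatorname{supp}(m)$, hence $h$ is $\Gamma$-invariant and constant by ergodicity of the action; feeding this back into $f=f\circ T^n$ and conditioning on the (increasing, eventually full) $\sigma$-algebras generated by $(Z_0,\dots,Z_{n-1},x)$ forces $f$ itself constant. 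Here the hypotheses that the $Z_i$ are i.i.d.\ and that $\operatorname{supp}(m)$ generates $\Gamma$ as a semigroup are used essentially.

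With these in hand, the pointwise ergodic theorem gives $\frac{1}{n+1}\sum_{j=0}^n g_\infty(T^j(\omega,x))\to\mathfrak{c}:=\int g_\infty\,d(\mathbf{p}\times\mu)$ a.e., while the error $\frac{1}{n+1}\sum_{j=0}^n(g_{n-j}-g_\infty)(T^j(\omega,x))$ is bounded by $\frac{1}{n+1}\sum_{j=0}^{n-N}G_N(T^j(\omega,x))+\frac{1}{n+1}\sum_{j>n-N}\Phi(T^j(\omega,x))$; the first sum converges to $\int G_N$ by Birkhoff and the second to $0$ for fixed $N$ (difference of two $\Phi$-ergodic-averages with ratios tending to $1$), so the error vanishes on letting $N\to\infty$. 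Thus $\frac{1}{n+1}\cI_{\cP^{F_n(\omega)}}(x)\to\mathfrak{c}$ for $\mathbf{p}\times\mu$-a.e.\ $(\omega,x)$, hence for $\mathbf{p}$-a.e.\ $\omega$ and $\mu$-a.e.\ $x$ by Fubini; uniform integrability of the averages (dominated by the convergent ergodic averages of $\Phi$) upgrades this to $L^1$-convergence. Finally $\mathfrak{c}=\int_\Omega H\bigl(\cP\,|\,\bigvee_{i\ge1}\cP^{F^1_i(\omega)}\bigr)\,d\mathbf{p}(\omega)$, and $H\bigl(\cP\,|\,\bigvee_{i=1}^n\cP^{F^1_i(\omega)}\bigr)$ decreases to $H\bigl(\cP\,|\,\bigvee_{i=1}^\infty\cP^{F^1_i(\omega)}\bigr)$ by monotonicity of conditional entropy; since the almost sure limit equals the constant $\mathfrak{c}$, this identifies the limit with the conditional entropy and shows the latter is almost surely constant.

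I expect the main obstacle to be the ergodicity of the skew product $T$: this is precisely where independence of the $Z_i$ and semigroup generation by $\operatorname{supp}(m)$ are indispensable, and if $T$ were not ergodic the limit would only be the conditional expectation of $g_\infty$ on the $T$-invariant $\sigma$-algebra and could genuinely depend on $\omega$. A secondary point is making Chung's maximal estimate uniform in $\omega$ and handling the ``recent'' terms $g_{n-j}$ with $j$ close to $n$ in the ergodic averaging, but these become routine once the decomposition above is set up.
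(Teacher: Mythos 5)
Your proposal takes the same overall route as the paper: realize the random walk as a skew product over the Bernoulli shift and apply a relativized Shannon--McMillan--Breiman argument. Where you diverge is that you re-derive the ingredients from scratch for this special case, whereas the paper simply constructs the cocycle $\alpha$ (with values $\alpha(S^n\omega,\omega)=\omega_{n-1}\cdots\omega_0$) on the two-sided Bernoulli shift and invokes the already-proved general Theorem~\ref{thm:SMBmain} (whose proof contains exactly your decomposition $\cI_{\cP^{F_n(\omega)}}=\sum_{j=0}^n g_{n-j}\circ T^j$, the martingale argument, and the Chung-type maximal estimate, packaged as Lemma~\ref{lemma:integrability}). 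So your ``chain-rule + martingale + Chung + Birkhoff'' skeleton is precisely the skeleton of Theorem~\ref{thm:SMBmain}, specialized.

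The one place where your write-up is actually \emph{more} careful than the paper's proof of Theorem~\ref{random-SMB} is ergodicity of the skew product. You give the correct Kakutani argument: a bounded $T$-invariant $f$ has constant $\Omega$-fiber-average $h$ because $h$ is harmonic for the $m$-Markov operator and $\mathrm{supp}(m)$ generates $\Gamma$ as a semigroup, hence $h$ is $\Gamma$-invariant, and then martingale convergence along the filtration generated by $Z_0,\dots,Z_{n-1}$ and $x$ forces $f$ constant. The paper instead asserts weak-mixing of the cocycle with a pointer to \S~\ref{sec:mix}, but Proposition~\ref{prop:mixing} there runs in the opposite direction (it requires the $\Gamma$-action on $X$ to be \emph{strongly mixing}, whereas here only ergodicity of $X$ is assumed); so your direct argument fills a step the paper only gestures at. Two small cosmetic points: the extra boundary term $\frac{1}{n+1}\cI_\cP(\pi_X(T^{n+1}(\omega,x)))$ you mention is spurious (with $g_0(\omega,\cdot)=\cI_\cP(\cdot)$ the telescoping closes exactly at $j=n$), and the paper works with the two-sided shift so that $S$ is invertible and $\cO_S$ is a genuine equivalence relation, while your notation suggests one-sided; neither affects the argument since only forward iterates of $T$ are used. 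Finally, note that both you and the paper's proof of Theorem~\ref{thm:SMBmain} actually identify the a.e.\ limit with $\int_\Omega H\bigl(\cP\mid\bigvee_{i\ge1}\cP^{F^1_i(\omega)}\bigr)\,d\mathbf{p}(\omega)$; the theorem's phrasing that $H\bigl(\cP\mid\bigvee_{i\ge1}\cP^{F^1_i(\omega)}\bigr)$ is itself a.s.\ constant is a statement carried over from Kifer and is not something your argument (or Theorem~\ref{thm:SMBmain}) establishes directly, so you should either defer to Kifer for that last equality or supply a separate zero--one law argument rather than deducing it from constancy of the integral.
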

 Theorem \ref{random-SMB} is a special case of  \cite[Thm.\@ 1.5, p.\@ 63]{Ki86} (using also \cite[Lem.\@ 1.9, p.\@ 62] {Ki86} and the notation in \cite[\S~1.2, p.\@ 14]{Ki86}). 
 In \S~\ref{sec-proof-main} we will show that it is also a special case of our discussion of general cocycles. Of course, similar results can be formulated for the random walk given by $Z_0(\omega)Z_1(\omega)\cdots Z_i(\omega)$, or for the random walk defined by the variables $\check{Z}(\omega)$ whose distribution is $\check{m}$ (with $\check{m}(\gamma)=m(\gamma^{-1})$), or for measures $m$ which are not necessarily of finite support.

\subsection{From entropy convergence along almost geodesics to general ergodic theorems along genuine geodesics}
Let us note the following more general perspective on the convergence, along (almost all) almost geodesics, of information functions.  For concreteness, consider again Theorem \ref{free-gps-SMB} which asserts that for actions of the free group, the normalized information functions of the refined partitions 
$$\cP\vee\xi_0\cP\vee \xi_0\xi_1\cP\vee\cdots \vee \xi_0\xi_1\cdots\xi_{n-1} \cP\,$$
converge. A general set in the partition is an intersection $\bigcap_{i=0}^{n-1} \xi_0\cdots \xi_i A_{j_i}$, with $A_{j_i}\in \cP$. A point $x$ in the intersection has trajectory $x\in A_{j_0}, \xi_0^{-1}x\in A_{j_1}, \xi_1^{-1}\xi_0^{-1}x\in A_{j_2}, \cdots, \xi_i^{-1}\cdots \xi_0^{-1}x\in A_{j_i}$. It is therefore natural to expect that  the sequence 
of points $x, \xi_0^{-1}x, \xi_1^{-1}\xi_0^{-1}x, \cdots, \xi_i^{-1}\cdots \xi_0^{-1}x$ in $X$ has good equidistribution and ergodic properties in the space $X$, for almost all $\xi$. 
This is indeed the case for all the cases we considered above, namely other convergence  theorems - including the mean, pointwise, and maximal ergodic theorems, hold along (almost all) $C$-almost geodesics in the set-ups discussed in the present paper. This fact is an instance of the general results established in \cite{BN13b} and \cite{BN15a} about ergodic theorems along asymptotically invariant sets in general extensions of amenable ergodic p.m.p. equivalence relations using weak-mixing cocycles. 

Given the results stated above it is natural to wonder about the possibility that  convergence of normalized information functions holds along (almost all) {\it genuine geodesics} in a  hyperbolic group $H$, with respect to suitable hyperbolic metrics defined on $H$, and not just along $C$-almost geodesics. For the case of suitable word metrics it is indeed possible to establish convergence of information functions along (almost all) genuine geodesics.  For general word metrics it is possible to establish such a result for strongly mixing actions of $H$. Furthermore, convergence in the ergodic theorems just mentioned also holds along almost all genuine geodesics. These results are based on applying the arguments in the present paper and  in \cite{BN13b} and \cite{BN15a} to certain Markov codings arising from the word-metric and a suitable p.m.p. equivalence relation 
 associated with it. An exposition of these results is currently under preparation.

\section{Amenable equivalence relations, skew extensions and orbital entropy}
The approach developed in the present paper to the Shannon-McMillan-Breiman theorem for hyperbolic groups is motivated by the general approach to ergodic theorems for non-amenable groups developed in \cite{BN13b},  \cite{BN15a} and \cite{BN15b}. It is based on constructing a suitable cocycle $\alpha : \cR\to \Gamma$, from an ergodic amenable p.m.p. equivalence relation $\cR$ on a standard space $(Y,\nu)$. The cocycle produces variable finite sets in $\Gamma$ for which it is possible to derive entropy convergence  theorems. In the context of hyperbolic groups, the natural choice for such an equivalence relation arises from the $\Gamma$-orbit relation on the double boundary $\partial^2\Gamma=\left(\partial \Gamma\times \partial \Gamma\right) \setminus \Delta(\partial \Gamma)$, restricted to a set of measure $1$ w.r.t the (infinite) invariant measure.

Let us turn to describe the ingredients of this general approach.

  \subsection{Amenable equivalence relations and skew extensions} \label{sec:amenableeqrel}
We consider standard Borel equivalence relations with countable classes 
$\cR\subset Y\times Y$, where $(Y,\nu)$ is a probability space, and $[y]=\cR(y)\subset Y$ denotes the $\cR$-class of $y\in Y$. We assume that the equivalence relation preserves the probability measure $\nu$. 
Recall that $\cR$ is called hyperfinite if  it can be 
written as an increasing union of equivalence subrelations $\cR_n\subset \cR$, each 
with finite classes, i.e.
\[
\cR(y) := \bigcup_{n=1}^{\infty} \cR_n(y),\,\text{ for $\nu$-almost every } y\in Y.
\] 
 If $\cR$ is hyperfinite and ergodic, then the classes of $\cR$ are generated by a single ergodic p.m.p. transformation, almost surely. Namely, there exists an ergodic invertible measure-preserving transformation $S: Y\to Y$, such that for almost every $y\in Y$ :
$$\cO_S(y):=\set{(y, S^ky)\,;\, k \in \ZZ}=\set{(S^k y,y)\,;\, k \in \ZZ}= \cR(y)\,.$$ 
In fact, both properties are equivalent to the equivalence relation being amenable. For 
the definition of amenability and proof of the equivalence of these three conditions we refer to  \cite{CFW81}. 

\medskip
 
We recall the following definition. 
\begin{Definition}[Measurable cocycle] 
Given an equivalence relation $\cR$ with countable classes on a probability space $(Y,\nu)$, and any countable group $\Gamma$, a measurable map
\[
\alpha: \mathcal{R} \to \Gamma
\]
is called a {\em  cocycle} if for $\nu$-almost every $ y\in Y$, for all $u,v, w \in \cR(y)=[y]_\cR$ 
we have \begin{itemize}
	\item $\alpha(u,v) = \alpha(v,u)^{-1}$ and 
	\item $\alpha(u,v) \cdot \alpha(v,w) = \alpha(u,w)$.
\end{itemize}
If, in addition, there is a measurable set $Y_1 \subseteq Y$ with $\nu(Y_1) = 1$ such that 
$\alpha(y,z_1)\neq \alpha(y,z_2)$ when $y\in Y_1$ and $z_1\neq z_2$, then the 
cocycle $\alpha$ is called {\em class injective}. 
\end{Definition}

Assume that $\Gamma$ acts by p.m.p. transformations on the probability space $(X,\mu)$. Let there be given a equivalence relation $\cR$ on $(Y,\nu)$, and a measurable cocycle $\alpha : \cR\to \Gamma$. A crucial construction in our discussion is the equivalence relation, denoted $\cR^X$, which is the extension of the equivalence relation $\cR$ by the cocycle $\alpha$ and the $\Gamma$-action on $X$. 
The {\em extended equivalence relation $\cR^X$}  over
$\big( Y \times X, \nu \times \mu \big)$ is defined by the condition 
\[
\big( (y,x), (y^\prime, x^\prime) \big) \in \cR^X \Longleftrightarrow y\cR y^\prime \,\,\text{ and } x=\alpha(y,y^\prime)x^\prime
\,.\]
This well-known construction is also called the cocycle extension of $X$ by the cocycle $\alpha : \cR \to \Gamma$, or the skew extension associated with the cocycle $\alpha$.  

Assuming that the measure $\nu$ is $\cR$-invariant, it follows that the measure $ \mu \times \nu$ is $\cR^X$-invariant, since the $\Gamma$-action on $X$ preserves $\mu$. The projection map $\pi : \cR^X\to \cR$ given by $(x,y)\to y$ is {\it class-injective}, namely injective on almost every $\cR^X$-equivalence class.  
Further, it is well known that an extension of an amenable action is amenable, and thus in particular if $\cR$ is amenable, so is $\cR^X$. 

Clearly when $\cR$ is hyperfinite and  the extension is class-injective then every hyperfinite exhaustion $(\cR_n)$ of $\cR$ can be canonically lifted to a hyperfinite exhaustion $(\cR_n^X)$ of $\cR^X$, via $\cR_n^X((y,x))=\set{(y^\prime, \alpha(y^\prime,y)x)\,;\, y^\prime\in \cR_n(y)}$. 

We now define a concept of weak mixing for cocycles of the above kind.
\begin{Definition}\label{weak-mix}\cite[\S~2.2, Def.\@ 2.1]{BN15a}: 
The cocycle $\alpha : \cR \to \Gamma$ is called {\it weak-mixing} if for every ergodic p.m.p. action of $\Gamma$ on $(X,\mu)$, the extended relation $\cR^X$ is ergodic. 
\end{Definition}

 Consider now the case when $\cR$ is generated by a single given transformation $S$, so that  the relation  $\cR$ on $(Y,\nu)$ is the orbit relation of an invertible p.m.p. transformations of $S:(Y,\nu)\to (Y,\nu)$, namely $\cR=\set{(y,S^ny)\,;\, n\in \ZZ}=\cO_S$.  
The skew extension in this case is also called a skew product. It was  analyzed by Kakutani \cite{Ka51} in the context of random ergodic theorems,  and by Abramov-Rokhlin \cite{AR66} in the context of entropy theory. 
Its structure is completely determined by the associated measurable transformation $T : Y\times X\to Y\times X$, which preserves the probability measure $\nu\times \mu$, and given by : 
\begin{equation}\label{def-skew}
T(y,x)=(Sy, \alpha(Sy,y)x)\,\,;\,\, T^{-1}(y,x)=(S^{-1}y, \alpha(S^{-1}y, y)x)
\end{equation} 

Clearly the projection $p: Y\times X\to Y$ intertwines the action of $T$ on $Y\times X$ and the action of $S$ on $Y$, namely $p\circ T=S\circ p$, and $p_\ast (\nu\times \mu)=\nu$. Furthermore, the orbit relation $\cO_T$ determined by $T$ in $Y\times X$ is the extended relation $\cR^X$ defined above, namely $\cO_T=\cO_S^X$. Note that if we define $a(k, y)=\alpha (S^k y , y)$  for $y\in Y$ and $k\in \ZZ$, then for all $m,n\in \ZZ$ the cocycle identity implies  that $a$ satisfies the cocycle equation: 
\begin{equation}\label{cocycle-S}
a(n+m,y)= \alpha(S^{n+m}y,  y)=\alpha(S^{n+m}y, S^m y)\alpha(S^m y, y)=a(n,S^m y) a(m,y)\,.
\end{equation}

\subsection{Notions of entropy for skew extensions} \label{sec:notionsentropy} 

Let us now consider several of the notions of entropy that have played a role in the theory of skew products and more generally skew extensions.

\subsubsection{Abramov-Rokhlin relative entropy for skew products}
In their study of entropy for skew-product transformation,  Abramov-Rokhlin \cite{AR66}, defined the relative entropy of the system.  They consider the measure preserving extension of  $(Y,\nu)$ by $(X,m)$ via the cocycle $a : \ZZ\times Y\to \text{Aut}(X,\mu)$, 
taking values in the group of measure-preserving transformations on $X$, and show that the following limit exists, for any countable partition $\cP$ of $X$ of finite entropy:  
\begin{equation}\label{AR}  \lim_{n\to \infty} \frac{1}{n+1}\int_Y H \Big( \bigvee_{j=0}^n \alpha(y, S^j y)\cP \Big)d\nu(y)\,.\end{equation}
  The {\it relative entropy} $h^{rel}(T,S)$ is defined in \cite{AR66} by taking the supremum over all finite partitions $\cP$ of $ X$ (see \cite[\S 6.1]{Pe83} for a full discussion) and denote by $h^{rel}(T,S)$. For a detailed account of the relative fibrewise entropy of a skew product transformation and its properties we refer to  \cite[Ch. 6]{Pe83} (see also \cite{Ki86} for the case of independent random variables).

 Denote by $h^{\text{KS}}$ the Kolmogorov-Sinai entropy of an invertible p.m.p. transformation on a probability space. For the associated p.m.p. transformation $T: Y\times X \to Y\times X$ it is given by:  
\begin{Theorem}\label{AR formula}(Abramov-Rokhlin formula \cite{AR66})

\begin{equation}\label{AR-sum}
h^{\text{KS}}(T,Y\times X)=h^{\text{KS}}(S,Y)+ h^{rel}(T,S).
\end{equation}
\end{Theorem}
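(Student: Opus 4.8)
The plan is to deduce the Abramov--Rokhlin formula from the general Rokhlin-type addition formula for the entropy of a measure-preserving transformation \emph{relative to a factor}, applied to the factor map $p\colon (Y\times X,\nu\times\mu,T)\to (Y,\nu,S)$, and then to identify the resulting relative entropy with the quantity $h^{rel}(T,S)$ defined by the limit \eqref{AR}, exploiting the explicit skew-product structure \eqref{def-skew}.

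First I would invoke (or, if a self-contained account is wanted, reprove by the standard partition-refinement argument) the classical fact that whenever $(Y,\nu,S)$ is a measure-preserving factor of $(Z,\lambda,T)$ with corresponding $T$-invariant sub-$\sigma$-algebra $\mathcal{Y}\subset\mathcal{B}_Z$, one has
\[
h^{\text{KS}}(T,Z)=h^{\text{KS}}(S,Y)+h^{\text{KS}}(T\,|\,\mathcal{Y}),
\]
where $h^{\text{KS}}(T\,|\,\mathcal{Y})=\sup_{\cP}\lim_{n}\frac{1}{n+1}H\big(\bigvee_{j=0}^{n}T^{-j}\cP\,\big|\,\mathcal{Y}\big)$, the supremum running over finite partitions $\cP$ of $Z$. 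The essential ingredients of its proof are: the conditional addition formula $h^{\text{KS}}(T,\cP)=h^{\text{KS}}(S,\cQ)+h^{\text{KS}}\big(T,\cP\,\big|\,\bigvee_{k\in\ZZ}T^{-k}\widetilde{\cQ}\big)$ valid when $\cP$ refines the lift $\widetilde{\cQ}$ of a finite partition $\cQ$ of $Y$; monotonicity of relative entropy in the conditioning $\sigma$-algebra; and a double limiting argument along increasing generating sequences of partitions on $Y$ and on $Z$ (noting that $\bigvee_{k\in\ZZ}T^{-k}\widetilde{\cQ}$ increases to $\mathcal{Y}$). I would apply this with $Z=Y\times X$, $\lambda=\nu\times\mu$, $p$ the first-coordinate projection, and $\mathcal{Y}=p^{-1}(\mathcal{B}_Y)$.

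Second I would compute $h^{\text{KS}}(T\,|\,\mathcal{Y})$ in cocycle terms. For a finite partition $\cP=\{A_i\}$ of $X$ with lift $\widetilde{\cP}=\{Y\times A_i\}$, iterating \eqref{def-skew} and using the cocycle identity gives $T^{j}(y,x)=(S^{j}y,\alpha(S^{j}y,y)x)$, and then $\alpha(S^{j}y,y)^{-1}=\alpha(y,S^{j}y)$ shows that the fiber over $y$ of the atom of $\bigvee_{j=0}^{n}T^{-j}\widetilde{\cP}$ through $(y,x)$ is precisely the atom of $\cP^{F_n(y)}=\bigvee_{j=0}^{n}\alpha(y,S^{j}y)\cP$ through $x$. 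Since $\nu\times\mu$ disintegrates over $(Y,\nu)$ with every conditional measure equal to $\mu$, this yields
\[
H\Big(\bigvee_{j=0}^{n}T^{-j}\widetilde{\cP}\,\Big|\,\mathcal{Y}\Big)=\int_Y H\Big(\bigvee_{j=0}^{n}\alpha(y,S^{j}y)\cP\Big)\,d\nu(y),
\]
and dividing by $n+1$ and letting $n\to\infty$ (the limit existing by subadditivity, which uses $T$-invariance of $\mathcal{Y}$; cf.\ Proposition~\ref{prop:reporbitalentropy}) identifies $h^{\text{KS}}(T,\widetilde{\cP}\,|\,\mathcal{Y})$ with the limit in \eqref{AR} for $\cP$. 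It then remains to see that the supremum over all finite partitions of $Z$ defining $h^{\text{KS}}(T\,|\,\mathcal{Y})$ is already attained along lifts of partitions of $X$: taking an increasing sequence $\cP_k$ of finite partitions of $X$ generating $\mathcal{B}_X$, the lifts satisfy $\bigvee_k\widetilde{\cP}_k\vee\mathcal{Y}=\mathcal{B}_Y\otimes\mathcal{B}_X$ modulo null sets, so by the relative Kolmogorov--Sinai theorem $h^{\text{KS}}(T,\widetilde{\cP}_k\,|\,\mathcal{Y})\nearrow h^{\text{KS}}(T\,|\,\mathcal{Y})$. Combining the two steps gives $h^{\text{KS}}(T\,|\,\mathcal{Y})=h^{rel}(T,S)$, and hence the formula.

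The main obstacle is the first step: the general relative-entropy addition formula over a factor, while classical, is not purely formal, as its proof rests on the conditional-entropy addition formula together with carefully interchanging two limits (along generators of the base and of the total space) with suprema of entropies. Everything afterward is essentially bookkeeping: the fiber computation is a direct consequence of the skew-product formula and the cocycle identity, and the reduction to partitions from $X$ is the standard generator argument for relative entropy.
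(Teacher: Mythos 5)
The paper does not actually prove this theorem; it states it with a citation to \cite{AR66} and immediately moves on to discuss the distinction between $h^{rel}(T,S)$ and $\mathfrak{h}^{orb}(\alpha,X)$. So there is no in-paper proof to compare against, and your proposal must be judged on its own merits.

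On those terms your argument is correct, and it is essentially the standard modern packaging of the Abramov--Rokhlin argument. The decomposition $h^{\text{KS}}(T,Z)=h^{\text{KS}}(S,Y)+h^{\text{KS}}(T\,|\,\mathcal{Y})$ over the factor $\sigma$-algebra $\mathcal{Y}=p^{-1}(\mathcal{B}_Y)$ is the classical Rokhlin/Parry addition formula and you are entitled to invoke it (though, as you acknowledge, it is the one genuinely nontrivial ingredient). The fiber computation is exactly right: iterating \eqref{def-skew} and using the cocycle identity gives $T^j(y,x)=(S^jy,\alpha(S^jy,y)x)$, so $(y,x)\in T^{-j}(Y\times A)$ iff $x\in\alpha(S^jy,y)^{-1}A=\alpha(y,S^jy)A$, and the disintegration of $\nu\times\mu$ over $\mathcal{Y}$ (all conditional measures equal to $\mu$) turns the conditional entropy into $\int_Y H\big(\bigvee_{j=0}^n\alpha(y,S^jy)\cP\big)\,d\nu(y)$, which after normalizing and passing to the limit is precisely the quantity \eqref{AR}. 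Finally, the reduction of the supremum defining $h^{\text{KS}}(T\,|\,\mathcal{Y})$ to lifts $\widetilde{\cP}_k$ of finite partitions of $X$ is handled correctly by the relative Kolmogorov--Sinai theorem, and the hypothesis there is satisfied for the strong reason that $\bigvee_k\widetilde{\cP}_k\vee\mathcal{Y}=\mathcal{B}_Y\otimes\mathcal{B}_X$ modulo nulls even before adjoining $T$-translates; together with monotonicity of the Abramov--Rokhlin quantity in $\cP$ this identifies $h^{\text{KS}}(T\,|\,\mathcal{Y})$ with $h^{rel}(T,S)$ as defined in the paper. I see no gap.
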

For a given a  partition $\cP$ of $X$ the expression (\ref{AR}) introduced in \cite{AR66} is denoted in our notation by $\mathfrak{h}_\cP(\alpha, X)$.
  We note  that the orbital Rokhlin entropy $\mathfrak{h}^{orb}(\alpha, X)$ introduced above is completely different than the relative entropy 
$h^{rel}(T,S)$ appearing in the Abramov-Rokhlin formula. Indeed, $h^{rel}(T,S)$
is defined as the {\it supremum of (\ref{AR}) over all finite partitions} $\cP$ of $X$.
This is a very different expression than $\mathfrak{h}^{orb}(\alpha, X)$ which is defined as the {\it infimum of (\ref{AR}) over all $\Gamma$-generating partitions} of $X$. (In fact, by Seward's analog of Krieger's theorem \cite{Se15a}, \cite{Se15b} the infinimum may be taken over finite $\Gamma$-generating partitions).  

The definition of  $\mathfrak{h}^{orb}(\alpha, X)$ we introduce is of course motivated by the remarkable insight that the key to developing entropy theory beyond amenable groups 
is the passage from the {\it supremum} over all finite partitions of the space (which is the  customary definition of the Kolmogorov-Sinai entropy) to the {\it infimum} over countable  generating partitions. 
This ground-breaking principle originated in L. Bowen's work developing the definition and properties of the $f$-entropy for free groups and sofic entropy for sofic groups \cite{Bo10c}\cite{Bo12}.  This principle was greatly enhanced by B. Seward's definition of Rokhlin entropy for general groups \cite{Se15a} and the theory of Rokhlin entropy based on it \cite{Se15a}, \cite{Se15b},\cite{Se16}.

\medskip

 We also point out that various ergodic and entropy theoretic aspects of measure preserving skew products have been studied, see for instance investigations on relative topological pressure \cite{Wa86}, or Smale endomorphisms
 \cite{MU20}.

\subsubsection{Orbital entropy for amenable groups}
An orbital approach to entropy theory for actions of amenable groups was initiated  by 
Rudolph and Weiss \cite{RW00} and developed further by Danilenko \cite{Da01} and Danilenko-Park \cite{DP02}. We recall the set-up of orbital entropy theory developed there. 
Assume that $\Gamma$ is a countable group which acts freely by probability measure preserving transformations on a probability space $(X,\mu)$. Further suppose that $\cR$ is a hyperfinite measure preserving Borel equivalence relation on the probability space $(Y,\nu)$ and that there is a class injective cocycle $\alpha: \cR \to \Gamma$. Let $\cP$ denote a finite measurable partition of $X$.   

\begin{Proposition} \label{prop:entropy}\cite{Da01}.
	For every bounded, hyperfinite exhaustion $(\cR_n)$, we have 
	\begin{equation}\label{ent-hf}
	 \lim_{n \to \infty}  \int_Y \frac{H\Big( \bigvee_{z \in \cR_n(y)} \alpha(y,z) \cP \Big)}{|\cR_n(y)|}\,d\nu(y)=\inf_{\cT} \int_{Y} \frac{H\Big( \bigvee_{z \in \cT(y)} \alpha(y,\, z)\cP\Big)}{|\cT(y)|}\,d\nu(y),
	\end{equation}
	
	where the infimum runs over all bounded subequivalence relations $\cT$ of $\cR$.
\end{Proposition}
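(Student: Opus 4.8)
The plan is to establish the stated identity between the limit along a bounded hyperfinite exhaustion and the infimum over all bounded subequivalence relations by a two-sided argument. First I would prove that the left-hand side is bounded below by the right-hand side: this is immediate, since each $\cR_n$ is itself a bounded subequivalence relation of $\cR$, so every term in the limiting sequence already dominates the infimum, and hence so does the limit (provided the limit exists). The existence of the limit on the left is the first technical point to settle; I would obtain it from a subadditivity/almost-subadditivity property of the function $n\mapsto \int_Y H\big(\bigvee_{z\in\cR_n(y)}\alpha(y,z)\cP\big)\,d\nu(y)$ with respect to the normalization $|\cR_n(y)|$. Concretely, because the $\cR_n$ are increasing with finite classes and $\alpha$ is class injective, the refined partition over a larger class is the join of the refined partitions over the (finitely many) smaller sub-classes inside it, and the Shannon entropy of a join is at most the sum of the entropies; averaging over $Y$ and using that $\nu$ is $\cR$-invariant (so the conditional measure on classes is well-behaved) yields a Fekete-type inequality from which convergence follows. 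Here the invariance of $\nu$ under $\cR$ is what lets one pass freely between the "$y$-centered" and "class-averaged" points of view.

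The substantive half is the reverse inequality: the limit along $(\cR_n)$ is $\le$ the average entropy density of \emph{any} fixed bounded subequivalence relation $\cT\subseteq\cR$. The strategy I would use is a Rokhlin/quasi-tiling comparison in the amenable relation $\cR$. Given $\cT$ with uniformly bounded class size, one wants to show that for $n$ large the $\cR_n$-classes are, on a set of $y$ of measure close to $1$, almost exactly tiled by translates of $\cT$-classes — i.e.\ $\cR_n(y)$ decomposes into a disjoint union of $\cT$-classes together with a boundary remainder of relative size $\to 0$. This is precisely the kind of Ornstein–Weiss quasi-tiling statement available for hyperfinite (equivalently amenable) p.m.p.\ equivalence relations; since $\cR$ is amenable and $(\cR_n)$ is a bounded hyperfinite exhaustion, one can find such tilings. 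On the tiled part, subadditivity of Shannon entropy over the join gives $H\big(\bigvee_{z\in\cR_n(y)}\alpha(y,z)\cP\big)\le \sum_{\text{tiles }\cT(z_i)} H\big(\bigvee_{w\in\cT(z_i)}\alpha(z_i,w)\cP\big) + (\text{boundary contribution})$, where I use the cocycle identity $\alpha(y,w)=\alpha(y,z_i)\alpha(z_i,w)$ together with the fact that $H$ is invariant under the $\Gamma$-translation $\gamma\mapsto\alpha(y,z_i)\gamma$ acting on partitions (it only relabels atoms). Dividing by $|\cR_n(y)|$, integrating over $Y$, and controlling the boundary term by $H(\cP)$ times the vanishing relative boundary fraction, one recovers $\int_Y H\big(\bigvee_{z\in\cT(y)}\alpha(y,z)\cP\big)/|\cT(y)|\,d\nu(y)$ in the limit; taking the infimum over $\cT$ finishes the inequality. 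The finiteness of $\cP$ is what keeps the boundary term integrable and lets $\varepsilon$-arguments go through cleanly.

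I expect the main obstacle to be making the quasi-tiling step rigorous in the equivalence-relation setting and, in particular, controlling the averaged contribution of the boundary/remainder set uniformly in $y$: one needs the tiling to be simultaneously measurable in $y$, to have relative boundary measure $\to 0$ after integrating in $y$ (not just pointwise), and to interact correctly with the $\cR$-invariant measure $\nu$ so that the per-$y$ estimates assemble into a genuine bound on the integral. A secondary subtlety is that $\cT$ need not be a subrelation of any single $\cR_n$, so the tiles are $\cT$-classes that may straddle the exhaustion; this is handled by choosing $n$ large depending on $\cT$ and on the target accuracy $\varepsilon$, using that the class size of $\cT$ is uniformly bounded. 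Once these measurable quasi-tiling facts are in hand — and they are standard consequences of amenability of $\cR$ — the entropy bookkeeping is routine subadditivity plus $\Gamma$-invariance of $H$ under relabeling via $\alpha$.
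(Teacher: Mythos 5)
The paper itself gives no proof of this proposition: it is quoted verbatim from Danilenko with the single citation \cite{Da01}, and the surrounding text merely reproduces the statement. So there is no argument in the source to compare against, and the only question is whether your outline would actually produce a proof.

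Your lower-bound observation is fine, but both other components need repair. The Fekete framing does not apply here: there is no additive index structure behind a bounded hyperfinite exhaustion, so there is no analogue of $a_{m+n}\le a_m+a_n$. The observation you actually need --- and which you gesture at when you invoke the $\cR$-invariance of $\nu$ --- is that the functional
$v(\cT):=\int_Y H\big(\bigvee_{z\in\cT(y)}\alpha(y,z)\cP\big)/|\cT(y)|\,d\nu(y)$
is \emph{antitone} under inclusion of finite subrelations: if $\cT_1\subseteq\cT_2$ then $v(\cT_2)\le v(\cT_1)$. This follows from a mass-transport (Feldman--Moore) identity rather than from subadditivity in an index: since the integrand is constant on $\cT$-classes, $v(\cT)=\int_{Y_\cT}H\big(\bigvee_{z\in\cT(y)}\alpha(y,z)\cP\big)\,d\nu$ for any Borel transversal $Y_\cT$ of $\cT$; decomposing each $\cT_2$-class into its $\cT_1$-subclasses, using subadditivity of $H$ over the join and the invariance of $H$ under the $\Gamma$-translation by $\alpha(y,w_i)$, and transporting back to a transversal of $\cT_1$ yields the inequality. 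Antitonicity immediately gives existence of $\lim_n v(\cR_n)$; no Fekete argument, and no subadditivity-in-$n$, is needed or available.

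The Ornstein--Weiss quasi-tiling step is likewise an unnecessary detour, and the worries you correctly raise about producing a simultaneously measurable tiling with integrated boundary control are precisely the reason to avoid it. Because $\cT$ is already a subrelation of $\cR=\cup_n\cR_n$, the intersection $\cT_n:=\cT\cap\cR_n$ is automatically a finite Borel subrelation of $\cR_n$, and for $\nu$-a.e.\ $y$ one has $\cT_n(y)=\cT(y)$ once $n$ is large enough (using that $\cT$ has bounded classes). Antitonicity gives $v(\cR_n)\le v(\cT_n)$, the integrands are uniformly dominated by $H(\cP)$ and converge pointwise to the integrand for $\cT$, so dominated convergence yields $v(\cT_n)\to v(\cT)$, hence $\lim_n v(\cR_n)\le v(\cT)$ for every bounded $\cT$. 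Combined with the trivial direction, this closes the proof. In short, your instinct to compare $\cR_n$-classes with $\cT$-classes is right, but the comparison is the canonical one through $\cT\cap\cR_n$ together with the transversal identity, not a quasi-tiling argument; once you use it, the boundary-term bookkeeping disappears.
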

Orbital entropy itself is defined in \cite{Da01} as the {\it supremum} of the foregoing expressions over all finite partitions of the space. Note that every ergodic p.m.p. action of every amenable group $\Gamma$ is amenable, the orbit relation it generates is hyperfinite, and by definition it has a class-injective cocycle into $\Gamma$.  Orbital entropy theory developed by \cite{RW00}, \cite{Da01} and \cite{DP02} focuses on orbit relations of actions of amenable groups via the study of the foregoing expression for the hyperfinite equivalence relations these actions generate. This is demonstrated in the papers just cited to be an effective tool for the study of the Kolmogorov-Sinai entropy for actions of amenable groups. 

We note however that \cite{Da01} and \cite{DP02} do not consider the problem of pointwise convergence of information functions for partitions refined using classes of increasing finite relations. This convergence result was established for general  cocycles on arbitrary hyperfinite  equivalence relations in \cite{NP17}. 

In the case where $\Gamma$ is non-amenable, the definition of orbital entropy in \cite{Da01} just described, namely using the supremum over finite partitions, will typically yield for a properly ergodic action of $\Gamma$ the value $\infty$.
The same applies to the definition using the supremum over finite partitions in a skew product of an action of the free group over a Bernoulli shift given by R. Grigorchuk in \cite{Gr99}. However, the definition using the infimum over finite generating partitions gives, in all cases, the Rokhlin entropy of the action. 

\subsubsection{Orbital entropy for actions of non-amenable groups}
For weak-mixing class-injective cocycles, in \cite{NP17} orbital Rokhlin entropy was defined as the {\it infimum} of (\ref{ent-hf}) 
over generating partitions. It was shown, by an argument similar to Proposition \ref{thm:ENT} above, that for free ergodic actions, it equals the Rokhlin entropy of the $\Gamma$-action on $X$, as defined by Seward. A Shannon-McMillan-Breiman theorem for the pointwise convergence of the information functions of partitions of refined along the classes of increasing finite relations was established in \cite{NP17}.

Given an amenable p.m.p.\@ equivalence relation $\cR$ on a space $Y$, there are of course many different ways to  generate its countable equivalence classes. The two most natural ones are hyperfinite exhaustions, and generating the classes by the orbits of a single p.mp. transformation of the underlying probability space. As noted above it was  shown by \cite{CFW81}  that a p.m.p. equivalence relation is amenable if and only if it admits a hyperfinite exhaustion, if and only and its classes can be generated by a single invertible p.m.p. transformation. Such a transformation is of course far from unique, and our discussion focuses on the orbit relation which is generated by the action of fixed  judiciously chosen invertible p.m.p. transformations $S$ on  suitable probability spaces $(Y,\nu)$.  This transformation will be chosen to produce natural variable geometric sets in the group, namely segments of almost geodesics in the group, as described in the discussion of the geometric cocycles in the previous section.  

We note the remarkable fact that the two alternative definitions (using hyperfinite exhaustions or orbits of a single m.p. map) of orbital Rokhlin entropy $\mathfrak{h}(\alpha,X)$ 
give one and the same invariant, independently of whether the classes of the relation $\cR$ are exhausted by increasing partial orbits of a p.m.p. transformation, or by  increasing finite relations. In all cases the value is equal to the Rokhlin entropy of the action of $\Gamma$ on $(X,\mu)$.
 As noted above, this is a consequence of Seward's inequality, see Proposition \ref{thm:ENT} for p.m.p.\@ transformations, and \cite[Prop.\@ 1.7]{NP17}.

\begin{Remark}\label{notation convention}{\bf Notational convention.}
For brevity, we denote by $h_{\mathcal{P}}(\alpha, X)$ the {\em orbital entropy} of the partition $\cP$ of $X$ with respect to the given p.m.p. action of $\Gamma$ on $X$ and the cocycle $\alpha : \cR \to \Gamma$. The partition $\cP$ is refined using the finite sets $F_n(y)\in \text{Fin }(\Gamma)$, given by $F_n(y)=\set{\alpha(y, z)\,;\, z \in \cF_n(y)}$ where $\cF_n(y)$ are finite sets with $\set{y}\times \cF_n(y) \subset \cR(y)$. In the short-hand notation $h_{\cP}(\alpha,X)$ for the entropy, $\alpha$ stands for the data $(Y, \nu, \cR, \Gamma, \alpha : \cR\to \Gamma, \cF_n(y))$ and $X$ stands for the data $(X,\mu, \Gamma \curvearrowright X)$. 
\end{Remark}

\subsection{Comparison to other notations} 
In \cite[p.\@ 233]{Pe83}, the action of a m.p. map $T: X \to X$ on a partition $\cP=\set{A_1, \dots , A_k}$ is $T^{-1}\cP=\set{T^{-1}A_1, \dots, T^{-1}A_k}$. Skew products are defined on p. 254 via a map $x\to S_x$, from $X$ to m.p. maps $S_x: Y\to Y$. The skew products $\tilde{T}:  X \times Y\to X \times Y$ is given by $\tilde{T} (x,y)=(Tx, S_x y)$.  
So for $n > 0$ $\tilde{T}^n(x,y)=(T^n x, S_{T^nx}S_{T^{n-1}x} \dots S_x y)$. 
For a partition $\cP$ of $Y$, the refined partitions are defined by 
$$\beta_1^n(x)=S_x^{-1}\cP \vee S_x^{-1} S_{Tx}^{-1} \cP\vee\cdots\vee S_x^{-1}S_{Tx}^{-1}\cdots S_{T^{n-1}x}^{-1}\cP$$

In the set-up of the present paper, we have $S: Y\to Y$ and a cocycle $c : \ZZ\times Y \to \Gamma$ where $\Gamma$ is a group of m.p. transformations of $X$. The skew product transformation is $T :Y \times X \to Y\times X$ given by $T(y, x)=(Sy, c(1,y)x)$, and so 
$T^n(y,x)=(S^ny, c(1, S^{n-1}y)\dots c(1, Sy)c(1,y)x)$. The relation is the orbit relation of $S$ on $Y$ namely $\cR=\cO_S$, and $\alpha(S^n y,y)=c(n,y)$ as noted in (\ref{cocycle-S}) above. 
Therefore, $y\mapsto c(1,y) : X \to X $ is the map assigning to $y\in Y$ an m.p.t of $X$, and so if $\cP$ is a partition of $X$, then the partitions just defined are in our notation 
$$c(1,y)^{-1}\cP\vee c(1,y)^{-1}c(1,Sy)^{-1}\cP \vee \cdots \vee c(1,y)^{-1}c(1,Sy)^{-1}\cdots c(1,S^{n-1}y)^{-1}\cP
$$
$$=\alpha(y, Sy)\cP\vee \alpha(y, Sy)\alpha(Sy, S^2y)\cP\vee \cdots \vee \alpha(y, Sy)\alpha(Sy, S^2y)\cdots \alpha(S^{n-1}y, S^{n}y)\cP
$$
$$=\alpha(y, Sy)\cP\vee \alpha(y,S^2y)\cP\vee \cdots \vee \alpha(y, S^{n}y)\cP
$$
$$=\bigvee_{z \in \cF_n(y)} \alpha(y, z)\cP\,\,\,;\,\,\, \cF_n(y)=\set{Sy, S^2y,\dots ,S^n y}
$$
provided $\cP^\gamma=\gamma\cP=\set{\gamma A_1, \dots \gamma A_k}$.  

We also note that in \cite[p.\@ 2]{Bo92}  the expression occuring in Thm.\@ 2.2 stated there for the refined partitions used to compute the fibrewise entropy is the same : 
$$\cP\vee c(1,y)^{-1}\cP\vee c(2,y)^{-1}\cP\vee\cdots\vee c(n-1,y)^{-1}\cP\,.
$$

\section{Fibrewise entropy for skew extensions}

\subsection{Preliminaries : conditional entropy and information functions}
We begin by recalling some of the basic definitions and properties of conditional entropy and information functions, which will be used repeatedly in the proofs below. We refer to  \cite[Ch. 6]{Si94} for a full discussion.

\medskip

Suppose that $(X,\mu)$ is a probability space. Let $\cP$ be two countable measurable partitions $\cQ$ with $H(\cP)+H(\cQ) < \infty$. The {\it Shannon entropy} of the partition $\cP$ is defined by 
$$H(\cP)=-\sum_{i\in \NN} \mu(A_i) \log \mu(A_i)\ge 0\,.$$ 
The unique cell of the partition $\cP$ containing $x$ is denoted $\cP(x)$. 
If $\cQ$ is another countable partition, $\cQ$ refines $\cP$ if every cell of $\cP$ is a union of cells of $\cQ$, and this is denoted $\cQ \ge \cP$. 

The {\it conditional probability} of $\cP$ given $\cQ$ is the measurable function on $X$ defined by 
\begin{equation}\label{cond-prob}
\mu\big( P\,|\, \mathcal{Q} \big)(x) = \sum_{Q \in \mathcal{Q}} \frac{\mu(P \cap Q)}{\mu(Q)}\cdot \one_Q(x).
\end{equation}
The {\it conditional information function} for $\cP$ conditioned on $\cQ$ is defined by 
\begin{equation}\label{cond-info}
\mathcal{J}(\cP|\cQ)(x) := -\sum_{P \in \cP} \log \mu\big( P\,|\, \mathcal{Q} \big)(x) \cdot \one_P(x)\,.
\end{equation}
Note that if $P \in \cP$ and $Q \in \cQ$ and $x \in P \cap Q$, then 
\begin{eqnarray} \label{eqn:condinffunction}
\mathcal{J}\big( \cP\,|\, \cQ \big)(x) = - \log \frac{\mu(P \cap Q)}{\mu(Q)}.
\end{eqnarray}

\medskip

The {\it conditional Shannon entropy} of $\cP$ given $\cQ$ is defined by 
\begin{equation}\label{cond-entr}
H\big( \cP\,|\, \cQ \big) := - \sum_{P \in \cP} \sum_{Q \in \cQ} \log \mu(P\,|\, Q) \cdot \mu(P \cap Q)\,. 
\end{equation}
Note that the conditional entropy of $\cP$ given $\cQ$ is the integral over $X$ of the conditional information functions of $\cP$ given $\cQ$, by  equality~\eqref{eqn:condinffunction} :  
\begin{equation}\label{cond-ent-int}
	H\big( \cP\,|\, \cQ \big) = \int_X \mathcal{J}\big( \cP\,|\, \cQ \big)(x)\,d\mu(x).
	\end{equation}

\begin{Proposition}\label{prop-cond-ent}{\it (Properties of conditional entropy).}
Let $\cP_1, \cP_2$ and $\cQ_1, \cQ_2$ be two countable partitions of the standard probability space $(X,\mu)$ with finite Shannon entropy.  
\begin{enumerate}
\item If $\gamma : X \to X$ is a p.m.p. map then 
$H(\gamma^{-1} (\cP\vee \cQ))= H(\gamma^{-1} \cP\vee \gamma^{-1}\cQ)=H(\cP\vee \cQ)$
\item $H(\cP\vee\cQ)=H(\cP)+H(\cQ\,|\,\cP)\le H(\cP)+H(\cQ)$. 
\item $H(\cP\vee \cQ_1 | \cQ_2)=H(\cP| \cQ_2)+H(\cQ_1| \cP\vee \cQ_2)\le H(\cP | \cQ_2)+H(Q_1| \cQ_2)$.
\item $H(\cP|\cQ_1)\le H(\cP|\cQ_2)$ if $\cQ_1\ge \cQ_2$ namely $\cQ_1$ refines $\cQ_2$.  
\end{enumerate}
\end{Proposition}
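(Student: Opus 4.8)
The plan is to derive all four statements directly from the definitions \eqref{cond-entr}--\eqref{cond-ent-int}, reducing everything to two elementary facts: the identity $\mu(P\cap Q)=\mu(Q)\,\mu(P\mid Q)$, and the concavity on $[0,1]$ of $\phi(t)=-t\log t$ (with $\phi(0)=0$), which yields Jensen's inequality $\sum_k \lambda_k\,\phi(t_k)\le \phi\!\left(\sum_k\lambda_k t_k\right)$ for any probability weights $\lambda_k$ and $t_k\in[0,1]$. For (1), I would first note that, \emph{as partitions}, $\gamma^{-1}(\cP\vee\cQ)$ is literally $\gamma^{-1}\cP\vee\gamma^{-1}\cQ$, since $\gamma^{-1}(A_i\cap B_j)=\gamma^{-1}A_i\cap\gamma^{-1}B_j$; this gives the first equality. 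The second is immediate because $\gamma$ is a $\mu$-preserving bijection, so $\mu(\gamma^{-1}C)=\mu(C)$ for every atom $C$ of $\cP\vee\cQ$, and $H$ depends only on the multiset of atom measures.

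For (2), the chain rule $H(\cP\vee\cQ)=H(\cP)+H(\cQ\mid\cP)$ follows by writing $-\log\mu(A_i\cap B_j)=-\log\mu(A_i)-\log\mu(B_j\mid A_i)$, multiplying through by $\mu(A_i\cap B_j)$, and summing over $i,j$; the rearrangement of the doubly-indexed nonnegative series is legitimate, and $H(\cP),H(\cQ)<\infty$ guarantees that each resulting piece is finite. The inequality $H(\cQ\mid\cP)\le H(\cQ)$ is then Jensen applied with weights $\lambda_i=\mu(A_i)$ and $t_i=\mu(B_j\mid A_i)$: concavity of $\phi$ gives $\sum_i\mu(A_i)\,\phi\!\left(\mu(B_j\mid A_i)\right)\le\phi(\mu(B_j))$ for each fixed $j$, and summing over $j$ yields $H(\cQ\mid\cP)\le H(\cQ)$.

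For (3) I would argue ``fibrewise over $\cQ_2$'': conditioning on an atom $Q\in\cQ_2$ replaces $\mu$ by the normalized restriction $\mu(\cdot\mid Q)$, for which (2) reads $H_{\mu(\cdot\mid Q)}(\cP\vee\cQ_1)=H_{\mu(\cdot\mid Q)}(\cP)+H_{\mu(\cdot\mid Q)}(\cQ_1\mid\cP)$ together with $H_{\mu(\cdot\mid Q)}(\cQ_1\mid\cP)\le H_{\mu(\cdot\mid Q)}(\cQ_1)$. Using $H(\cdot\mid\cQ_2)=\sum_{Q\in\cQ_2}\mu(Q)\,H_{\mu(\cdot\mid Q)}(\cdot)$ and the analogous identity $H(\cdot\mid\cP\vee\cQ_2)=\sum_{Q}\mu(Q)\,H_{\mu(\cdot\mid Q)}(\cdot\mid\cP)$ (a routine reindexing over the atoms $P\cap Q$), averaging these relations against $\mu(Q)$ gives both the equality and the inequality in (3); alternatively, the inequality is just the unconditional consequence $H(\cQ_1\mid\cP\vee\cQ_2)\le H(\cQ_1\mid\cQ_2)$, which is (4). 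For (4), when $\cQ_1\ge\cQ_2$ each atom $Q_2\in\cQ_2$ is a disjoint union $\bigcup_k Q_1^{(k)}$ of atoms of $\cQ_1$, and $\mu(P\mid Q_2)=\sum_k\frac{\mu(Q_1^{(k)})}{\mu(Q_2)}\,\mu(P\mid Q_1^{(k)})$ is a convex combination; applying concavity of $\phi$ to it, multiplying by $\mu(Q_2)$, summing over $P$ and over $Q_2$, and regrouping the sum over the $Q_1^{(k)}$ gives $H(\cP\mid\cQ_1)\le H(\cP\mid\cQ_2)$.

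I expect no real obstacle: the only points needing a little care are the bookkeeping for \emph{countably infinite} partitions---absolute convergence and free rearrangement of the relevant double series, guaranteed by the finiteness hypotheses on the Shannon entropies---and a clean invocation of Jensen's inequality for $\phi(t)=-t\log t$ with countably many weights, all of which is standard. Accordingly, the write-up will either carry out the computations above or simply refer the reader to \cite[Ch.~6]{Si94}.
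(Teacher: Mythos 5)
Your derivation is correct. Note, however, that the paper does not actually prove this proposition: it records it as a collection of standard facts and refers the reader to \cite[Ch.~6]{Si94}, so there is no in-paper argument to compare against. Your write-up is exactly the standard textbook derivation: (1) from $\gamma^{-1}(A\cap B)=\gamma^{-1}A\cap\gamma^{-1}B$ and measure-preservation; (2) by splitting $-\log\mu(A_i\cap B_j)$ via $\mu(A_i\cap B_j)=\mu(A_i)\mu(B_j\mid A_i)$, then Jensen for the concave $\phi(t)=-t\log t$; (3) by applying (2) fibrewise to the conditional measures $\mu(\cdot\mid Q)$, $Q\in\cQ_2$, and reindexing; and (4) by writing $\mu(P\mid Q_2)$ as a convex combination of the $\mu(P\mid Q_1^{(k)})$ over the refining atoms and invoking concavity again. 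The bookkeeping remarks you make are the right ones: all series involved have nonnegative terms, so rearrangement is free, and the finiteness hypotheses ensure all quantities appearing are finite; in (3) one should also observe that $\cP\vee\cQ_2\ge\cQ_2$, so the inequality is indeed the special case of (4) you cite. The only cosmetic slip is writing ``$t_i=\mu(B_j\mid A_i)$'' as if indexed only by $i$, when of course $j$ is held fixed for that application of Jensen; the intent is clear. In short: correct, complete, and it supplies an explicit proof where the paper simply cites \cite{Si94}.
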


Let $\cP_i$ and $\cQ_i$, $i\in \NN$ be two sequences of countable partitions of the standard probability space $(X,\mu)$ with finite Shannon entropy. Denote by $\cP^\vee=\bigvee_{i\in \NN}\cP_i$ the smallest partition refining each of the partitions $\cP_i$, $i\in \NN$, and by $\cP^\wedge=\bigwedge_{i\in \NN} \cP_i$ the largest partition refining each of the partitions $\cP_i$, $i\in \NN$. 

\begin{Proposition}\label{prop-cond-ent}{\it (Convergence of conditional entropy).}
\begin{enumerate}
\item If $\cP_1\le \cP_2 \le \cdots \le \cP_i\le \cdots $ is an increasing sequence of partitions, then 
$$\lim_{i\to \infty} H(\cP_i | \cQ)=H(\cP^\vee | \cQ)\,.$$
\item If $\cP_1\ge  \cP_2 \ge \cdots \ge \cP_i\ge \cdots $ is an decreasing sequence of partitions, then 
$$\lim_{i\to \infty} H(\cP_i | \cQ)=H(\cP^\wedge | \cQ)\,.$$
\item If $\cQ_1\le \cQ_2 \le \cdots \le \cQ_i\le \cdots $ is an increasing sequence of partitions, then 
$$\lim_{i\to \infty} H(\cP | \cQ_i)=H(\cP | \cQ^\vee)\,.$$

\end{enumerate}
\end{Proposition}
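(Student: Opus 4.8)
All three assertions are the classical continuity properties of conditional entropy, and the plan is to prove each of them by the same two-step scheme. \emph{Step one:} use monotonicity of $H(\cdot\,|\,\cdot)$ --- in its first argument via the chain rule $H(\cP_1\vee\cP_2\,|\,\cQ)=H(\cP_1\,|\,\cQ)+H(\cP_2\,|\,\cP_1\vee\cQ)$, and in its second argument via $H(\cP\,|\,\cQ_1)\le H(\cP\,|\,\cQ_2)$ when $\cQ_1\ge\cQ_2$ (the properties of conditional entropy recalled above) --- to show that the relevant monotone limit exists and to obtain one of the two inequalities for free. \emph{Step two:} pass to the level of the conditional information function, show that $\cJ(\cdot\,|\,\cdot)$ converges $\mu$-almost everywhere and in $L^1(X,\mu)$ to the information function of the limiting conditional entropy, and then integrate, using $H(\cdot\,|\,\cdot)=\int_X\cJ(\cdot\,|\,\cdot)\,d\mu$ as in~\eqref{cond-ent-int}, to force equality.

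For part (1), with $\cP_i\nearrow\cP^\vee$, the chain rule makes $i\mapsto H(\cP_i\,|\,\cQ)$ non-decreasing and bounded above by $H(\cP^\vee\,|\,\cQ)$, so the limit exists and is $\le H(\cP^\vee\,|\,\cQ)$. Since the atoms of $\cP^\vee$ are the decreasing intersections $\cP^\vee(x)=\bigcap_i\cP_i(x)$, continuity of measure along the decreasing sequence of sets $\cP_i(x)\cap\cQ(x)$ gives, via~\eqref{eqn:condinffunction}, that $\cJ(\cP_i\,|\,\cQ)(x)\uparrow\cJ(\cP^\vee\,|\,\cQ)(x)$ for $\mu$-almost every $x$; the monotone convergence theorem then yields $H(\cP_i\,|\,\cQ)\to H(\cP^\vee\,|\,\cQ)$, the common value being possibly $+\infty$, in which case there is nothing further to prove. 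Part (2), with $\cP_i\searrow\cP^\wedge$, is dual: now the chain rule makes $\cJ(\cP_i\,|\,\cQ)$ pointwise non-increasing and dominated by $\cJ(\cP_1\,|\,\cQ)$, which lies in $L^1$ because $H(\cP_1)<\infty$; the atoms of $\cP^\wedge$ are the increasing unions $\bigcup_i\cP_i(x)$, so continuity of measure gives $\cJ(\cP_i\,|\,\cQ)(x)\downarrow\cJ(\cP^\wedge\,|\,\cQ)(x)$ a.e., and dominated convergence yields $H(\cP_i\,|\,\cQ)\to H(\cP^\wedge\,|\,\cQ)$, a finite limit since $H(\cP^\wedge)\le H(\cP_1)<\infty$.

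Part (3), with $\cQ_i\nearrow\cQ^\vee$, is the genuinely nontrivial case. Monotonicity in the conditioning variable makes $i\mapsto H(\cP\,|\,\cQ_i)$ non-increasing, with $\lim_i H(\cP\,|\,\cQ_i)\ge H(\cP\,|\,\cQ^\vee)$. The reverse inequality cannot be read off from continuity of measure, because the conditioning atoms $\cQ_i(x)$ decrease to $\cQ^\vee(x)=\bigcap_i\cQ_i(x)$, which may well be $\mu$-null; instead I would invoke the increasing (L\'evy upward) martingale convergence theorem, noting that the $\sigma$-algebra generated by the partition $\cQ^\vee$ coincides with the increasing limit $\bigvee_i\sigma(\cQ_i)$: for each of the countably many atoms $P\in\cP$ the conditional probabilities $\mu(P\,|\,\cQ_i)(x)$ converge a.e.\ to $\mu(P\,|\,\cQ^\vee)(x)$, and hence $\cJ(\cP\,|\,\cQ_i)(x)\to\cJ(\cP\,|\,\cQ^\vee)(x)$ for $\mu$-almost every $x$. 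To upgrade this pointwise statement to convergence of the integrals I would use the maximal inequality of Chung for conditional information functions, namely $\int_X\sup_i\cJ(\cP\,|\,\cQ_i)\,d\mu<\infty$ whenever $H(\cP)<\infty$ (see, e.g., \cite[Ch.~6]{Si94} or \cite{Pe83}); dominated convergence then gives $H(\cP\,|\,\cQ_i)\to H(\cP\,|\,\cQ^\vee)$.

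The substance of the argument is entirely concentrated in part (3). Parts (1) and (2) reduce to the chain rule plus elementary continuity of measure and the monotone/dominated convergence theorems, with no martingale theory needed. In part (3), by contrast, the a.e.\ statement requires the martingale convergence theorem, and --- the real obstacle --- the passage from a.e.\ to $L^1$ convergence requires a uniform integrable majorant for $\sup_i\cJ(\cP\,|\,\cQ_i)$, i.e.\ precisely Chung's maximal inequality; once that majorant is available the rest is routine bookkeeping with $H(\cdot\,|\,\cdot)=\int_X\cJ(\cdot\,|\,\cdot)\,d\mu$ and the chain rule.
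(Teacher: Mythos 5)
The paper does not actually prove this proposition: it is stated, together with the preceding list of elementary properties of conditional entropy, as standard background material with a reference to~\cite[Ch.~6]{Si94}. So there is no proof in the paper to compare against; what follows is a review of your argument on its own terms.

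Your proof is correct and follows the standard textbook treatment. Parts~(1) and~(2) reduce, as you say, to monotonicity of the conditional information function $\cJ(\cdot\,|\,\cQ)(x)$ under refinement/coarsening of the first argument, continuity of measure along the monotone chains of atoms $\cP_i(x)\cap\cQ(x)$ (decreasing to $\cP^\vee(x)\cap\cQ(x)$, respectively increasing to $\cP^\wedge(x)\cap\cQ(x)$), and then the monotone convergence theorem in~(1) and dominated convergence in~(2), the dominating function $\cJ(\cP_1\,|\,\cQ)$ being integrable because $H(\cP_1\,|\,\cQ)\le H(\cP_1)<\infty$. (Minor quibble: what makes $\cJ(\cP_i\,|\,\cQ)(x)$ monotone in $i$ is the monotonicity of the atoms under set-inclusion, not the chain rule per se, but this is a matter of phrasing and does not affect the argument.) Part~(3) is indeed where the content lies: the L\'evy upward martingale convergence theorem gives $\mu(P\,|\,\cQ_i)\to\mu(P\,|\,\cQ^\vee)$ a.e.\ for each of the countably many atoms $P\in\cP$ (and the limit is a.e.\ positive on $P$, since $\{\mu(P\,|\,\cQ^\vee)=0\}$ is $\sigma(\cQ^\vee)$-measurable and intersects $P$ in a null set), hence $\cJ(\cP\,|\,\cQ_i)\to\cJ(\cP\,|\,\cQ^\vee)$ a.e.; and the passage to $L^1$ genuinely requires a uniformly integrable majorant for $\sup_i\cJ(\cP\,|\,\cQ_i)$, which is exactly what Chung's maximal inequality supplies under the hypothesis $H(\cP)<\infty$. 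This is the same mechanism that the paper redevelops in Lemma~\ref{lemma:integrability}(a)--(b) in the skew-product setting, so you have correctly identified the crux. No gaps.
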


\subsection{Pointwise convergence of fibrewise entropy}

We now turn to discuss the fibrewise entropy of a skew extension.  Convergence of the fiberwise entropy (\ref{limit-S}) for a skew product is due to Abramov-Rokhlin \cite{AR66} (see also the account in \cite{Pe83} for skew products), and to Ward-Zhang \cite{WZ92} for actions of amenable groups more generally. Morita \cite[Thm.\@ 1]{Mo86} establishes almost sure convergence of the fibrewise entropy (\ref{Fibrewise}). We will follow Petersen \cite[Ch. 6, p. 254, Prop.\@ 1.1]{Pe83}, and  Bogensch\"utz \cite[Thm.\@ 2.2 and Thm.\@ 4.2~(ii)]{Bo92}  (who both consider only finite partitions), and give some details of the proof including the computation of the limit (\ref{formula-cond}), for the sake of completeness.

\begin{Proposition} \label{prop:reporbitalentropy}
Let $\Gamma$ be a countable group acting by probability measure preserving transformations on $(X,\mu)$. Let $S$ be an invertible p.m.p. transformation $S$ on $(Y,\nu)$, and let $\alpha: \cO_S \to \Gamma$ be a cocycle.  Let $\cP$ be a countable measurable partition of $(X,\mu)$ with $H(\cP)< \infty$. Then the sequence of  functions 
	\begin{equation}\label{Fibrewise}
 \frac{1}{n+1} H\Big( \bigvee_{j=0}^n \alpha(y, S^j y) \cP \Big)
	\end{equation}
	converges for $\nu$-almost every $y \in Y$ to a limit.  If $S$ is assumed to be ergodic, then the limit is a constant almost surely,  
	given by the following limit 
	\begin{equation}\label{limit-S}
	 \lim_{n \to \infty} \frac{1}{n+1} \int_Y H\Big( \bigvee_{j=0}^n \alpha(y,\, S^j y) \cP \Big)\, d\nu(y):=
	 \mathfrak{h}_\cP(\alpha,X)\,.
	\end{equation}
Its value is given by the average of the conditional entropies, namely  
	\begin{equation}\label{formula-cond} 
	 \mathfrak{h}_\cP(\alpha,X)=
 \int_Y H\Big( \cP\,|\, \bigvee_{j=1}^{\infty} \alpha(y,S^j y) \cP \Big)\,d\nu(y).
\end{equation}
			 
\end{Proposition}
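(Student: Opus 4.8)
The plan is to reduce the statement to the classical Abramov--Rokhlin / Kingman-type argument for skew products, suitably adapted to countable partitions and cocycles with values in a countable group of measure-preserving transformations. First I would fix the partition $\cP$ with $H(\cP)<\infty$, set $a(j,y)=\alpha(S^jy,y)=\alpha(y,S^jy)^{-1}$, and write $\cP_n(y)=\bigvee_{j=0}^n \alpha(y,S^jy)\cP$. The key algebraic identity is the chain rule for conditional entropy (Proposition~\ref{prop-cond-ent}(2)--(3)):
\begin{equation*}
H\big(\cP_n(y)\big)=H\big(\alpha(y,S^ny)\cP\big)+H\Big(\bigvee_{j=0}^{n-1}\alpha(y,S^jy)\cP \,\Big|\, \alpha(y,S^ny)\cP\Big).
\end{equation*}
Because the cocycle identity gives $\alpha(y,S^jy)=\alpha(y,S^ny)\alpha(S^ny,S^jy)$, and $\alpha(S^ny,S^jy)=\alpha(S^ny,S^{n-(n-j)}(S^ny))$ depends only on the base point $S^ny$, applying the invariance property Proposition~\ref{prop-cond-ent}(1) (every $\gamma$ acts by a p.m.p.\ map, so pre-translating all cells by $\alpha(y,S^ny)^{-1}$ preserves conditional entropy) shows that the second term equals $H\big(\bigvee_{j=1}^{n}\alpha(S^ny,S^{n-j}(S^ny))\cP \,\big|\, \cP\big)$ evaluated at the point $S^ny$ in place of $y$. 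Re-indexing, this is precisely the ``one-step'' conditional entropy function
\begin{equation*}
g_n(y):=H\Big(\cP \,\Big|\, \bigvee_{j=1}^{n}\alpha(y,S^{-j}y)\cP\Big)
\end{equation*}
evaluated at $S^ny$. Thus $H(\cP_n(y))=H(\cP)+\sum_{k=1}^{n}g_k(S^ky)$ after telescoping — this is the analogue of the telescoping in Petersen \cite[Ch.~6]{Pe83} and Bogensch\"utz \cite{Bo92}, and is the step I would write out carefully.

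Next I would handle convergence. The functions $g_n$ form a decreasing sequence (Proposition~\ref{prop-cond-ent}(4): conditioning on a finer partition decreases conditional entropy, and $\bigvee_{j=1}^{n}\alpha(y,S^{-j}y)\cP$ is increasing in $n$), and by Proposition~\ref{prop-cond-ent}(2) part of the convergence statement, $g_n(y)\to g_\infty(y):=H\big(\cP \,|\, \bigvee_{j=1}^{\infty}\alpha(y,S^{-j}y)\cP\big)$ pointwise and in $L^1(Y,\nu)$, with $0\le g_n\le g_1\le H(\cP)$ so the sequence is dominated. Now $\frac1{n+1}H(\cP_n(y))=\frac{H(\cP)}{n+1}+\frac1{n+1}\sum_{k=1}^{n}g_k(S^ky)$. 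For the $\nu$-a.e.\ pointwise convergence I would split $g_k(S^ky)=g_\infty(S^ky)+(g_k-g_\infty)(S^ky)$: the first piece is a genuine Birkhoff average of the single integrable function $g_\infty$, so it converges a.e.\ by the pointwise ergodic theorem, to $\int_Y g_\infty\,d\nu$ when $S$ is ergodic; the second piece is a Ces\`aro average of $h_k(S^ky)$ with $h_k:=g_k-g_\infty\ge 0$, $h_k\downarrow 0$, $h_k\le H(\cP)$, and one controls $\frac1{n+1}\sum_{k\le n}h_k(S^ky)$ by a standard maximal-inequality / ``uniform Cesàro'' argument — dominate it by $\sup_{m\ge N}\frac1{m}\sum_{k\le m}h_N(S^ky)\to \int h_N\,d\nu$ for fixed $N$ and then let $N\to\infty$ using $\int h_N\,d\nu\to 0$ by dominated convergence. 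This yields the a.e.\ limit $\int_Y g_\infty\,d\nu$, which is constant when $S$ is ergodic.

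Finally, the mean convergence \eqref{limit-S} follows by integrating: $\frac1{n+1}\int_Y H(\cP_n(y))\,d\nu(y)=\frac{H(\cP)}{n+1}+\frac1{n+1}\sum_{k=1}^n\int_Y g_k(S^ky)\,d\nu(y)=\frac{H(\cP)}{n+1}+\frac1{n+1}\sum_{k=1}^n\int_Y g_k\,d\nu$ (by $S$-invariance of $\nu$), and since $\int g_k\,d\nu\to\int g_\infty\,d\nu$ the Cesàro means converge to the same value. Identifying this with the right-hand side of \eqref{formula-cond} is a matter of replacing $S^{-j}$ by $S^{j}$ using the measure-preserving change of variables $y\mapsto S^{n}y$ (equivalently noting $\int_Y H\big(\cP\,|\,\bigvee_{j=1}^\infty\alpha(y,S^{-j}y)\cP\big)d\nu=\int_Y H\big(\cP\,|\,\bigvee_{j=1}^\infty\alpha(y,S^{j}y)\cP\big)d\nu$ by invariance), giving $\mathfrak{h}_\cP(\alpha,X)=\int_Y H\big(\cP\,|\,\bigvee_{j=1}^\infty\alpha(y,S^jy)\cP\big)d\nu(y)$ as claimed. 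The main obstacle I anticipate is the bookkeeping in the telescoping step — correctly tracking how the cocycle identity together with the p.m.p.\ invariance of conditional entropy converts the ``forward'' refinement along $S^0,\dots,S^n$ into a Birkhoff sum of a ``backward'' one-step conditional entropy evaluated along the orbit — together with making the dominated-Cesàro argument for the non-stationary part $g_k-g_\infty$ fully rigorous; the finite-Shannon-entropy hypothesis on $\cP$ is exactly what supplies the domination $g_k\le H(\cP)$ that makes all these interchanges of limits legitimate.
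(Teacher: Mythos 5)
Your plan — telescope $H\big(\cP_n(y)\big)$ into a Birkhoff sum of one‑step conditional entropies, split off the tail and control the non‑stationary remainder by a dominated Cesàro argument — is a legitimate alternative to the paper's route, which instead invokes Kingman's subadditive ergodic theorem for the a.e.\ convergence and proves the telescoping identity separately only to identify the limit.  However, as written your telescoping step contains a real error.  From the chain rule in the form $H(\cA\vee\cB)=H(\cB)+H(\cA\,|\,\cB)$, with $\cA=\cP_{n-1}(y)$ and $\cB=\alpha(y,S^ny)\cP$, what you actually get after translating by $\alpha(S^ny,y)$ is
\[
H\big(\cP_n(y)\big)=H(\cP)+H\Big(\bigvee_{k=1}^{n}\alpha\big(S^ny,S^{-k}(S^ny)\big)\cP\,\Big|\,\cP\Big),
\]
a two‑term expression; it is not a telescope, and the second term is $H(\text{vee}\,|\,\cP)$, which is \emph{not} equal to $g_n(S^ny)=H\big(\cP\,|\,\text{vee}\big)$ when $n\ge 2$ (the flip $H(\cA\,|\,\cP)=H(\cP\,|\,\cA)$ is only valid when $H(\cA)=H(\cP)$, e.g.\ for a single translate of $\cP$, not for a vee of several translates).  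To get the recursion you claim, you need the other direction of the chain rule: $H\big(\cP_n(y)\big)=H\big(\cP_{n-1}(y)\big)+H\big(\alpha(y,S^ny)\cP\,|\,\cP_{n-1}(y)\big)$; translating the second summand by $\alpha(S^ny,y)$ does give exactly $g_n(S^ny)$ with $g_n(y)=H\big(\cP\,|\,\bigvee_{j=1}^n\alpha(y,S^{-j}y)\cP\big)$, and iterating \emph{this} gives $H(\cP_n(y))=H(\cP)+\sum_{k=1}^n g_k(S^ky)$.  (The index computations in the line ``$\alpha(S^ny,S^jy)=\alpha(S^ny,S^{n-(n-j)}(S^ny))$'' are also off; it should be $\alpha\big(S^ny,S^{-(n-j)}(S^ny)\big)$.)

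There is a second gap at the end: you assert $\int_Y H\big(\cP\,|\,\bigvee_{j\ge1}\alpha(y,S^{-j}y)\cP\big)\,d\nu=\int_Y H\big(\cP\,|\,\bigvee_{j\ge1}\alpha(y,S^{j}y)\cP\big)\,d\nu$ ``by invariance.''  A direct change of variables $y\mapsto S^ny$ does not yield this: it turns $H\big(\cP\,|\,\bigvee_{j=1}^k\alpha(y,S^{-j}y)\cP\big)$ into $H\big(\cP\,|\,\bigvee_{m=0}^{k-1}\alpha(S^kz,S^mz)\cP\big)$, which after applying the cocycle identity and translation invariance becomes $H\big(\alpha(z,S^kz)\cP\,|\,\bigvee_{m=0}^{k-1}\alpha(z,S^mz)\cP\big)$ — i.e.\ an entropy of a \emph{translate} of $\cP$ conditioned on the past, not $H\big(\cP\,|\,\bigvee_{j=1}^k\alpha(z,S^jz)\cP\big)$.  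The equality of the two integrals is true but requires a further chain‑rule step: one can check that $\int g_k\,d\nu = \int\big[H(\cP_k)-H(\cP_{k-1})\big]\,d\nu = \int c_k\,d\nu$ where $c_k(y)=H\big(\cP\,|\,\bigvee_{j=1}^k\alpha(y,S^jy)\cP\big)$ is the paper's quantity, and then pass to the limit; alternatively and more cleanly, peel off the $j=0$ term as the paper does (i.e.\ $H(\cP_n(y))=H(\cP_{n-1}(Sy))+c_n(y)$), which produces the positive‑power conditionals directly and avoids the reconciliation altogether.  Your ``dominated Cesàro'' control of the remainder $\sum_{k\le n}(g_k-g_\infty)(S^ky)$ and the subsequent identification of the $L^1$ limit are correct and are where your argument genuinely departs from (and is a fine substitute for) the paper's appeal to Kingman.
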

\begin{proof}
We define $g_n(y)=H\Big( \bigvee_{j=0}^{n-1} \alpha(y,S^j y)\cP\Big)$. Combining the subadditivity property of Shannon entropy with the cocycle identity for $\alpha$, a short computation shows $g_{n+m}(y) \leq  g_n(y)+g_m(S^n y)$. Since $H(\cP) < \infty$, we have $g_n \in L^1(Y,\nu)$
for all $n \in \NN$. 
Therefore the existence of the limits, for $\nu$-almost every $y \in Y$
	\[
 \lim_{n \to \infty} \frac{1}{n+1} H\Big( \bigvee_{j=0}^n \alpha(y,S^j y) \cP \Big)
	\] 
follows from Kingman's ergodic theorem.  If $S$ is also ergodic, then the limit function is constant almost-surely, since it is easily seen, by the cocycle property, to be invariant under $S$. 
	Since $0\le \frac1n g_n(y)\le H(\cP)$, Lebesgue's dominated convergence theorem is applicable, and so this constant is $\mathfrak{h}_\mathcal{P}(\alpha, X)$.  Furthermore, a straightforward induction as in  \cite[Ch. 6]{Pe83} shows that for almost every $y \in Y$ and all $n \in \NN_{\geq 0}$, we have 
	\begin{eqnarray} \label{eqn:Cesaro}
	 H\Big( \bigvee_{j=0}^n \alpha(y,S^j y)\cP \Big)=  \sum_{k=0}^n H\Big( \cP\,|\, \bigvee_{j=1}^k \alpha(S^{n-k}y, S^{n-k} \, S^j y) \cP \Big),
	\end{eqnarray}
	where we use the convention $\bigvee_{j=1}^0 \alpha(y, S^j y) \cP = \{\emptyset, X\}$ for all $y$.

	The proof of this claim proceeds by induction, noting that the above relation is correct for $n = 0$ since both sides of the equation give the value $H(\cP)$. Now let $n \geq 1$ and assume that the equality~\eqref{eqn:Cesaro} holds for $n-1$. 
	Using Proposition \ref{prop-cond-ent} (for $\cP$ and $ \bigvee_{j=1}^n \alpha(y,S^j y)\cP$)  we have for $\nu$-almost every $y$ 
	\[
	H\Big( \bigvee_{j=0}^n \alpha(y, S^j y)\cP \Big) = H\Big( \bigvee_{j=1}^{n} \alpha(y, S^j y)\cP \Big) +  H\Big(  \cP \,\Big|\, \bigvee_{j=1}^{n} \alpha(y, S^j y)\cP  \Big)\,.
	\]
	Therefore, since $\alpha(y,Sy)\in \Gamma$ and $\Gamma $ acts on $(X,\mu)$ by invertible measure-preserving maps, by Proposition \ref{prop-cond-ent}
	\begin{eqnarray*}
		 H\Big( \bigvee_{j=0}^n \alpha(y, S^j y)\cP \Big)  &=& H\Big(\alpha(Sy,y) \bigvee_{j=1}^{n} \alpha(y, S^j y)\cP \Big)  + \,  H\Big(  \cP \,\Big|\, \bigvee_{j=1}^{n} \alpha(y, S^j y)\cP  \Big) \\ 
		&=& H\Big( \bigvee_{j=1}^{n} \alpha(Sy, y) \cdot \alpha(y, S^j y)\cP \Big)+ \, H\Big(\cP\,|\, \bigvee_{j=1}^n \alpha(y,S^j y) \cP \Big) \\
	\mbox{ (cocycle identity) }	 &=&  H\Big( \bigvee_{j=0}^{n-1}  \alpha(Sy, S^{j} Sy)\cP \Big) \, + \, H\Big(\cP\,|\, \bigvee_{j=1}^n \alpha(y,S^j y) \cP \Big) \\
\mbox{ (by induction) }	&=& \sum_{k=0}^{n-1}  H\Big( \cP\,|\, \bigvee_{j=1}^k \alpha( S^{n-1-k}\, Sy,\, S^{n-1-k}\,S^j\,S y) \cP  \Big)\,d\nu(y)  + \,H\Big(\cP\,|\, \bigvee_{j=1}^n \alpha(y,S^j y) \cP \Big)\\
&=&\sum_{k=0}^n H\Big( \cP\,|\, \bigvee_{j=1}^k \alpha(S^{n-k}y, S^{n-k} \, S^j y) \cP \Big)\,.
		\end{eqnarray*}

	The sequence
	 \[
	 c_k(y) = H\Big( \cP\,|\, \bigvee_{j=1}^k \alpha(y,S^j y)\cP\Big)\le H(\cP)
	 \]
	 is decreasing in $k$ for a given $y$ by Proposition \ref{prop-cond-ent}, and furthermore the following limit exists for $\nu$-almost every $y\in Y$ and satisfies the inequality 
	\[
	\lim_{k \to \infty} c_k(y) = H\Big( \cP\,|\, \bigvee_{j=1}^{\infty} \alpha(y,S^j y)\cP\Big)\le H(\cP)\,.
	\]
	
	We now integrate the equality~\eqref{eqn:Cesaro} over $Y$. Using the fact that $S$ preserves $\nu$, the uniform bound $0\le c_k(y)\le H(\cP)$,  and Lebesgue's dominated convergence theorem, the foregoing limit relation yields 
	\begin{eqnarray*}
	&& \lim_{n \to \infty} \frac{1}{n+1}\int_Y H\Big( \bigvee_{j=0}^n \alpha(y,S^j y)\cP \Big)\,d\nu(y) = \lim_{n \to \infty} \int_Y \frac{1}{n+1} \sum_{k=0}^n c_k(y)\, d\nu(y) \\
	 && \quad = \int_Y \lim_{n \to \infty} \frac{1}{n+1} \sum_{k=0}^n c_k(y) \, d\nu(y) =  \int_Y H\Big( \cP\,|\, \bigvee_{j=1}^{\infty} \alpha(y,S^j y)\cP\Big)\, d\nu(y),
	\end{eqnarray*}
	since the Ces{\`a}ro averages of a convergent sequence converge to the limit of that sequence. 
\end{proof}

\section{The Shannon-McMillan-Breiman theorem for skew extensions} \label{sec:SMB}

As before, we assume that a countable group $\Gamma$ acts on a probability space $(X,\mu)$ by measure preserving transformations. Moreover, we suppose that $S$ is a measure preserving, essentially free transformation on a probability space $(Y,\nu)$ and that there is a measurable 
 cocycle $\alpha: \cR \to \Gamma$. Recall that this gives rise to an associated probability measure preserving transformation $T$ on the product space $Y \times X$, defined as
\[
T: (Y \times X,\, \nu \times \mu) \to (Y \times X,\, \nu \times \mu), \,\, T(y,x) := \big( Sy,\, \alpha(Sy,y)x \big)\,
\] 
The condition that for all  ergodic actions of $\Gamma$ on $(X,\mu)$, the arising $T$ is ergodic w.r.t. $\nu\times \mu$ is equivalent to the condition that the cocycle $\alpha$ is weak-mixing. 

\medskip

In this section, we prove a 
Shannon-McMillan-Breiman theorem subject to the condition that $T$ is ergodic, and establish pointwise convergence to the limit given by the relative fibrewise entropy. 

\begin{Theorem}[Shannon-McMillan-Breiman theorem for relative entropy of skew extensions] \label{thm:SMBmain}
	Let $\Gamma$ be any countable group which acts ergodically by probability measure preserving transformations on $(X,\mu)$.
	Assume that $S$ is an essentially free invertible measure-preserving transformation on a probability space $(Y,\nu)$. Let $\cR=\cO_S$ be the orbit relation generated by $S$, and assume that there is a measurable cocycle $\alpha: \cR \to \Gamma$. 
	If the skew product transformation $T$ on $(Y \times_\alpha X, \nu \times \mu)$ defined by $S$ and $\alpha$ is ergodic, then for all countable partitions $\mathcal{P}$ of $X$ with $H(\mathcal{P}) < \infty$, 
	\begin{eqnarray*}
		\mathfrak{h}_\mathcal{P}(\alpha, X) = \lim_{n \to \infty} \frac{\mathcal{J}\big( \bigvee_{j=0}^n \alpha\big(y, S^j y \big)\cP  \big)(x)}{n+1}
	\end{eqnarray*}
	for $(\nu \times \mu)$-a.e.\@ $(y,x) \in Y \times X$. Moreover, the convergence also holds in $L^1(Y \times X, \nu \times \mu)$.
\end{Theorem}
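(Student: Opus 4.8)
The plan is to recast the statement as a \emph{fibrewise} Shannon--McMillan--Breiman theorem for the skew-product transformation $T$ relative to its base factor $(Y,\nu,S)$, and then to run Breiman's classical argument in that relative setting. First I would pass to $Y\times X$: let $\widehat{\cP}=\set{Y\times P\,;\,P\in\cP}$ be the pullback of $\cP$ through the second coordinate, and let $\cB_Y\subset\cB_{Y\times X}$ be the $\sigma$-algebra of the first coordinate. Since $S$ is invertible, $T^{-1}\cB_Y=\cB_Y$, so $\cB_Y$ is $T$-invariant. From $T^{-j}(y,x)=(S^{-j}y,\alpha(S^{-j}y,y)x)$ one checks that the trace of $\bigvee_{j=0}^n T^{-j}\widehat{\cP}$ on the fibre $\set{y}\times X$ is precisely $\bigvee_{j=0}^n\alpha(y,S^jy)\cP$, and hence the conditional information function relative to $\cB_Y$ satisfies $\cJ\big(\bigvee_{j=0}^n T^{-j}\widehat{\cP}\,\big|\,\cB_Y\big)(y,x)=\cJ\big(\bigvee_{j=0}^n\alpha(y,S^jy)\cP\big)(x)$. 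So it suffices to prove that $\frac{1}{n+1}\cJ\big(\bigvee_{j=0}^n T^{-j}\widehat{\cP}\,\big|\,\cB_Y\big)$ converges a.e.\ and in $L^1(Y\times X,\nu\times\mu)$ to the relative entropy $H\big(\widehat{\cP}\,\big|\,\bigvee_{j\ge1}T^{-j}\widehat{\cP}\vee\cB_Y\big)$; disintegrating this conditional entropy over $Y$ and comparing with formula~\eqref{formula-cond} shows it equals $\mathfrak{h}_\cP(\alpha,X)$.

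For the convergence, write $\cJ_n:=\cJ\big(\bigvee_{j=0}^n T^{-j}\widehat{\cP}\,\big|\,\cB_Y\big)$ and $f_m:=\cJ\big(\widehat{\cP}\,\big|\,\bigvee_{j=1}^m T^{-j}\widehat{\cP}\vee\cB_Y\big)$ (with $f_0=\cJ(\widehat{\cP}\,|\,\cB_Y)$). Combining the chain rule for conditional information, the relation $\bigvee_{j=1}^n T^{-j}\widehat{\cP}=T^{-1}\bigvee_{j=0}^{n-1}T^{-j}\widehat{\cP}$, the identity $\cJ(T^{-1}\cA\,|\,\cB_Y)=\cJ(\cA\,|\,\cB_Y)\circ T$ (valid since $T$ is invertible, measure-preserving, and $T^{-1}\cB_Y=\cB_Y$), and an induction on $n$, one obtains the exact Breiman decomposition
\[
\cJ_n=\sum_{k=0}^n f_{n-k}\circ T^{k};
\]
its integral over $X$ recovers the identity~\eqref{eqn:Cesaro} used in Proposition~\ref{prop:reporbitalentropy}. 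By the martingale convergence theorem $f_m\to f_\infty:=\cJ\big(\widehat{\cP}\,\big|\,\bigvee_{j\ge1}T^{-j}\widehat{\cP}\vee\cB_Y\big)$ pointwise a.e., and $\int f_\infty\,d(\nu\times\mu)=\mathfrak{h}_\cP(\alpha,X)$. The key technical ingredient is integrability of the maximal function $F^*:=\sup_{m\ge0}f_m$: applying the Chung--Doob minimal inequality to each of the martingales $\EE\big[\one_{Y\times P}\,\big|\,\bigvee_{j=1}^m T^{-j}\widehat{\cP}\vee\cB_Y\big]$ gives $(\nu\times\mu)(F^*>t)\le\sum_{P\in\cP}\min(\mu(P),e^{-t})$, so $\int F^*\le H(\cP)+1<\infty$; in particular $f_m\to f_\infty$ also in $L^1$.

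To conclude, fix $N$ and split $\frac{1}{n+1}\cJ_n=\frac{1}{n+1}\sum_{k=0}^n f_{n-k}\circ T^k$ according to whether $n-k>N$ or $n-k\le N$. On the first block $|f_{n-k}-f_\infty|\le g_N:=\sup_{m>N}|f_m-f_\infty|$, which is dominated by the integrable function $2F^*$ and satisfies $\int g_N\to0$; Birkhoff's theorem applied to $g_N$ and to $f_\infty$ then gives $\limsup_n\big|\frac{1}{n+1}\cJ_n-\frac{1}{n+1}\sum_{k=0}^n f_\infty\circ T^k\big|\le\int g_N$ a.e., up to the second-block term $\frac{2}{n+1}\sum_{k=n-N}^n F^*\circ T^k$ (for $n>N$), which tends to $0$ a.e.\ for each fixed $N$ since it is a difference of two convergent Ces\`aro averages of $F^*$. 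Since $T$ is ergodic, $\frac{1}{n+1}\sum_{k=0}^n f_\infty\circ T^k\to\int f_\infty=\mathfrak{h}_\cP(\alpha,X)$, and letting $N\to\infty$ yields the a.e.\ convergence $\frac{1}{n+1}\cJ_n\to\mathfrak{h}_\cP(\alpha,X)$. For the $L^1$ statement one uses $0\le\frac{1}{n+1}\cJ_n\le\frac{1}{n+1}\sum_{k=0}^n F^*\circ T^k$; the right-hand side converges in $L^1$ to $\int F^*$ (Birkhoff together with Scheff\'e's lemma) and is hence uniformly integrable, so the dominated family $\set{\frac{1}{n+1}\cJ_n}$ is uniformly integrable and its a.e.\ limit is also its $L^1$-limit.

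I expect the main obstacle to be exactly the uniform control captured by the maximal inequality $\int F^*\le H(\cP)+1$. The lag $n-k$ in the decomposition $\cJ_n=\sum_k f_{n-k}\circ T^k$ is variable, so $\cJ_n$ is not a genuine Birkhoff sum of a single function; replacing $f_{n-k}$ by $f_\infty$ must be done at a cost that is controlled uniformly along $T$-orbits, and this is precisely where the hypothesis $H(\cP)<\infty$ enters. The remaining steps --- the chain rule, martingale convergence, and the Birkhoff theorem for the ergodic transformation $T$ --- are routine. Alternatively one could invoke a relative SMB theorem already in the literature (e.g.\ Kifer~\cite{Ki86} or Bogensch\"utz~\cite{Bo92}) and supply only the reduction and the identification of the limit with $\mathfrak{h}_\cP(\alpha,X)$ carried out in the first paragraph.
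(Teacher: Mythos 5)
Your proposal is correct and follows essentially the same route as the paper's proof: both rest on the Breiman decomposition $\cJ_n=\sum_{k=0}^n f_{n-k}\circ T^k$, a Chung--Doob-type maximal inequality giving $\int F^*\le H(\cP)+1$, martingale convergence of $f_m\to f_\infty$, and Birkhoff's theorem for $T$ together with a two-block splitting to absorb the $f_{n-k}-f_\infty$ errors. The only difference is presentational: you phrase everything as a relative SMB theorem conditioned on the base $\sigma$-algebra $\cB_Y$, whereas the paper works with the equivalent fibrewise conditional information functions $f_n(y,x)=\cJ(\cP\,|\,\bigvee_{j=1}^n\alpha(y,S^jy)\cP)(x)$ directly.
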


In the case of independent random transformations this result is due to  Kifer \cite{Ki86} (see \S~\ref{sec:random} below for further discusion).  Bogensch\"utz \cite{Bo92} proves a more general result, where the fibrewise measure on the skew product is not necessarily constant as in our case, but considers only finite partitions. 
Zhu \cite{Zhu08} also proves a pointwise convergence theorem for random dynamical systems in the context of continuous measure preserving maps on a compact metric space, but the proof does not apply to actions on more general standard probability measure spaces. Finally, we note that a result stated by Morita \cite[Thm.\@ 3]{Mo86} implies Theorem \ref{thm:SMBmain}, but the sketch of the proof given there is very terse. We elaborate the proof not only for the purpose of self-containment, but also because the arguments are necessary in order to give a full proof of the maximal inequalities related to the Shannon-McMillan-Breiman theorem.  These maximal inequalities are necessary for the proof of the integrated form of the Shannon-McMillan-Breiman theorem stated in Corollary \ref{cor:SMBint}, which underlie the proof of Corollary \ref{free-gps-SMB-int}.

\subsection{Proof of the Shannon-McMillan-Breiman theorem}
This subsection is devoted to the proof of Theorem~\ref{thm:SMBmain} and its consequences.
We start by fixing a countable partition $\cP$ of $X$ with 
$H(\cP) < \infty$. With no loss of generality we assume $\mu(P) > 0$ for all $P \in \cP$. 
 In the result that follows, we will consider general rooted nested subset functions, namely sequences  
$y\in \cF_n(y)\subset \cF_{n+1}(y)\subset \cR(y)$ with $\cup_{n\in \NN} \cF_n(y)=\cR(y)$ for $\nu$-almost all $y\in Y$. Our main interest will be in the following two examples :
\begin{itemize} 
\item  $\cF_n(y)=\cR_n(y)$ where $\cR=\cup_{n \in \NN} \cR_n$ is a hyperfinite exhaustion, \item
 $S: Y \to Y$ is a p.m.p. transformation and $
\cF_n(y) := \big\{ S^j y:\, 0 \leq j \leq n \big\}, \quad y \in Y.$
\end{itemize}
 We further set 
$\cF_n^1(y) := \cF_n(y)\setminus \set{y}$ for $n \in \NN_{\geq 0}$ and $y \in Y$, as well as 
$\cF_{\infty}^1(y) := \cup_{n\in \NN}\cF_n^1(y)$.

\begin{Lemma} \label{lemma:integrability}
	Define $f_n(y,x) := \mathcal{J}\big( \cP \,|\, \bigvee_{z \in \cF_n^1(y)} \alpha(y,z)\cP \big)(x)$ and 
	$f^{*}(y,x) := \sup_{n \in \NN_{\geq 0}} f_n(y,x)$.
	Then the following holds.
	\begin{enumerate}[(a)]
		\item for each $\lambda \geq 0$ and each $P \in \cP$, we have 
		\[
		(\nu \times \mu)\big( \big\{ (y,x) \in Y \times P:\, f^{*}(y,x) > \lambda  \big\} \big) \leq e^{-\lambda}.
		\]
		\item  Fix $p$ satisfying $1 \leq p < \infty$, and assume that the partition $\cP$ satisfies 
		\[
		\sum_{P \in \cP} \mu(P) \big( - \log \mu(P) \big)^{p}  < \infty. 
		\]
		Then $f^{*} \in L^{p}(Y \times X,\, \nu \times \mu)$.
		In particular, $f^{*} \in L^1(Y \times X,\, \nu \times \mu)$. Moreover, if $\cP$ is finite, then $f^{*} \in L^q (Y \times X,\, \nu \times \mu)$ for all $1 \leq q < \infty$. 
		\item The sequence $(f_n)$ converges $(\nu \times \mu)$-almost everywhere and in $L^1$-norm to the limit 
		\[
		f^{\infty}(y,x) = \mathcal{J}\Big(\cP\,|\, \bigvee_{z \in \mathcal{F}^1_{\infty}(y)} \alpha(y,z)\cP\Big)(x).
		\]
	\end{enumerate}
\end{Lemma}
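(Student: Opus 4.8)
The plan is to prove part~(a), a weak-type maximal inequality, and then to deduce parts~(b) and~(c) from it by, respectively, a layer-cake estimate and the martingale convergence theorem. Throughout, each finite join $\bigvee_{z\in\cF_n^1(y)}\alpha(y,z)\cP$ is read as the (countable) partition it generates, and the infinite join $\bigvee_{z\in\cF_\infty^1(y)}\alpha(y,z)\cP$ as the corresponding generated $\sigma$-algebra; joint measurability of $f_n$, hence of $f^*$, in $(y,x)$ follows from measurability of the cocycle $\alpha$, of the $\Gamma$-action on $X$, and of $y\mapsto\cF_n(y)$ in the two cases of interest. For part~(a): fix $\lambda\ge0$, an atom $P\in\cP$, and (momentarily) $y\in Y$. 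Since $\cF_n^1(y)$ increases with $n$, the partitions $\cQ_n^y:=\bigvee_{z\in\cF_n^1(y)}\alpha(y,z)\cP$ increase, so $M_n:=\mu(P\mid\cQ_n^y)=\EE[\one_P\mid\sigma(\cQ_n^y)]$ is a nonnegative martingale on $(X,\mu)$ for the filtration $(\sigma(\cQ_n^y))_n$. By \eqref{eqn:condinffunction}, $f_n(y,x)=-\log M_n(x)$ for a.e.\ $x\in P$, so, writing $\tau:=\inf\{n\ge0:M_n<e^{-\lambda}\}$,
\[
\{x\in P:f^*(y,x)>\lambda\}=\{x\in P:\inf_n M_n(x)<e^{-\lambda}\}=\{x\in P:\tau<\infty\}.
\]
Since $\{\tau=n\}\in\sigma(\cQ_n^y)$ and $M_n<e^{-\lambda}$ on $\{\tau=n\}$, the defining property of conditional expectation gives $\mu(P\cap\{\tau=n\})=\int_{\{\tau=n\}}M_n\,d\mu\le e^{-\lambda}\mu(\{\tau=n\})$; summing over $n$ yields $\mu(\{x\in P:f^*(y,x)>\lambda\})\le e^{-\lambda}$ for every $y$, and integrating over $Y$ against $\nu$ and applying Fubini gives~(a).

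For part~(b), combine the trivial bound $\mu(P)$ with the decay bound $e^{-\lambda}$ from~(a), i.e.\ the distribution function of $f^*$ on $Y\times P$ is $\le\min\{\mu(P),e^{-\lambda}\}$; applying the layer-cake formula and splitting the $\lambda$-integral at $a_P:=-\log\mu(P)$, where the minimum switches value, gives
\[
\int_{Y\times P}(f^*)^p\,d(\nu\times\mu)\le\mu(P)\,a_P^{p}+\int_{a_P}^{\infty}p\lambda^{p-1}e^{-\lambda}\,d\lambda.
\]
The tail integral is $\le C_p\,e^{-a_P}(1+a_P^{p-1})=C_p\,\mu(P)(1+a_P^{p-1})$ for a constant $C_p$ depending only on $p$ (substitute $\lambda=a_P+t$ and use $(a_P+t)^{p-1}\le2^{p-1}(a_P^{p-1}+t^{p-1})$). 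Summing over $P\in\cP$ and using $a_P^{p-1}\le1+a_P^{p}$ yields $\|f^*\|_p^p\le C_p'\big(1+\sum_{P\in\cP}\mu(P)(-\log\mu(P))^p\big)<\infty$. Since $\nu\times\mu$ is a probability measure, $L^p\subseteq L^1$, so $f^*\in L^1$; and if $\cP$ is finite the moment hypothesis holds for every exponent, so $f^*\in L^q$ for all $1\le q<\infty$.

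For part~(c), note that the hypothesis $H(\cP)<\infty$ is exactly the $p=1$ instance of part~(b), so $f^*\in L^1(Y\times X,\nu\times\mu)$. For fixed $y$, the martingale $M_n=\EE[\one_P\mid\sigma(\cQ_n^y)]$ converges $\mu$-a.e.\ and in $L^1$ to $M_\infty=\EE[\one_P\mid\sigma(\cQ_\infty^y)]$ with $\cQ_\infty^y=\bigvee_{z\in\cF_\infty^1(y)}\alpha(y,z)\cP$ (martingale convergence / L\'evy's upward theorem); moreover $\mu(P\cap\{M_\infty=0\})=\int_{\{M_\infty=0\}}M_\infty\,d\mu=0$, so $M_\infty>0$ $\mu$-a.e.\ on $P$, whence $f_n(y,\cdot)=-\log M_n\to-\log M_\infty=\mathcal{J}(\cP\mid\cQ_\infty^y)=f^\infty(y,\cdot)$ $\mu$-a.e.\ on $P$, and summing over $P\in\cP$, $\mu$-a.e.\ on $X$. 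Integrating over $y$ gives the claimed $(\nu\times\mu)$-a.e.\ convergence $f_n\to f^\infty$; since $0\le f_n\le f^*\in L^1$, dominated convergence upgrades it to $L^1$-convergence.

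The main obstacle is the maximal estimate~(a): once the problem is localised in $y$, the crux is to recognise $\mu(P\mid\cQ_n^y)$ as a martingale and carry out the stopping-time argument; after that, (b) and~(c) are routine. The only other points needing care are the joint measurability in $(y,x)$ (so that Fubini applies in~(a) and the a.e.\ statements make sense) and the correct interpretation of the infinite, possibly uncountable, join $\bigvee_{z\in\cF_\infty^1(y)}\alpha(y,z)\cP$ as a $\sigma$-algebra in~(c).
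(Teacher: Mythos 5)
Your proposal is correct and follows the paper's own three-step scheme: a first-excursion (stopping-time) argument for the weak-type bound in (a), a layer-cake/distribution-function argument for (b), and Doob's martingale convergence theorem together with the positivity of the limit conditional probability for (c). The only material difference is in (b): you compute the $L^p$-norm directly via $\|f^*\|_p^p = \int_0^\infty p\lambda^{p-1}\,(\nu\times\mu)(f^*>\lambda)\,d\lambda$, split at $a_P = -\log\mu(P)$, and bound the tail incomplete-gamma integral $p\int_{a_P}^\infty u^{p-1}e^{-u}\,du$ by $C_p\,\mu(P)(1+a_P^{p-1})$, which is a clean and correct estimate. The paper instead substitutes $\lambda\mapsto\lambda^{1/p}$ and then (apparently by a slip) replaces the tail $\int_{a_P^p}^\infty e^{-\lambda^{1/p}}\,d\lambda$ by $\int_{a_P^p}^\infty e^{-\lambda}\,d\lambda = e^{-a_P^p}$, which is not an upper bound for $p>1$; your version avoids this and arrives at the same conclusion rigorously. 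In short, the student's proof matches the paper's approach and, in part (b), repairs a minor computational slip in the published argument.
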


\begin{Remark}
	We point out that if the integrability condition in statement~(b) of the previous lemma is satisfied for some $p \geq 1$, then we necessarily have $H(\cP) < \infty$. This follows from 
	\begin{align*}
	H(\cP) &= \sum_{P \in \cP: \mu(P) \geq e^{-1}} -\mu(P) \log \mu(P) \, + \, \sum_{P \in \cP: \mu(P) < e^{-1}} \mu(P) (- \log \mu(P)) \\
	&\leq   \sum_{P \in \cP: \mu(P) \geq e^{-1}} -\mu(P) \log \mu(P) + \sum_{P \in \cP} \mu(P) (- \log \mu(P))^p,
	\end{align*}
	while noting that the first sum consists of finitely many terms only. 
\end{Remark}

\begin{proof}
	We first prove the statement~(a). 
	For every atom $P \in \mathcal{P}$ and $n \geq 0$, we set 
	\[
	f_n^P(y,x) :=  \mathcal{J}\Big( \cP\,|\, \bigvee_{z \in \cF_n^1(y)} \alpha(y,z)\cP \Big)_{|P}(x) = - \log\, \mu\big( P\,|\, \bigvee_{z \in \cF_n^1(y)} \alpha(y,z)\cP \big)(x)
	\]
	for $(y,x) \in Y \times X$,  where we use the convention that $f_0^P(y,x) = - \log \mu(P) $.
	Define also  
	\[
	B_n^P(y) := \big\{ x \in X:\, f^P_{\ell}(y,x) \leq \lambda \mbox{ for all } 0 \leq \ell < n \mbox{ and } f^P_n(x,y) > \lambda \big\}
	\]
	for $y \in Y$. Note that we have $f_n^P(y,x) = f_n(y,x)$ on $Y \times P$. Using the fact that for each $k \leq n$ and all $y \in Y$, we have 
	$B_k^P(y) \in \sigma\big( \bigvee_{z \in \cF_n^1(y)} \alpha( y, z) \cP \big)$ (namely $B_k^P(y)$ belongs to  the $\sigma$-algebra generated by the sets in the partition  $\bigvee_{z \in \cF_n^1(y)} \alpha( y, z) \cP$ with the convention that $B_0^P(y) \in \{\emptyset,\, X\}$), we compute
	\begin{eqnarray*}
		\mu\big( B_k^P(y) \cap P \big) &=& \int_{B_k^P(y)} \one_P\, d\mu = \int_{B_k^P(y)} \mu\big( P\,|\, \bigvee_{z \in \cF_n^1(y)} \alpha(y, z)\cP \big) d\mu \\
		&=& \int_{B_k^P(y)} e^{-f_k^P(y,\cdot)}\,d\mu \leq e^{-\lambda} \cdot \mu\big(B_k^P(y) \big). 
		\end{eqnarray*}
	for $y \in Y$. Note that by definition of the events $B_k^P(y)$, we have 
	\[
	\big\{ x \in P:\, f^{*}(y,x) > \lambda \big\} \, = \, \bigcup_{k=0}^{\infty} \big( B_k^P(y) \cap P \big)
	\] 
	for $y \in Y$. Combining the pairwise disjointness of the sets $B_k^P(y)$ with the previous estimate, we obtain
	\[
	\mu\big( \big\{ x \in P:\, f^{*}(y,x) > \lambda \big\} \big) = \sum_{k=0}^{\infty} \mu\big( B_k^P(y) \cap P \big) \leq e^{-\lambda} \cdot \sum_{k=0}^{\infty} \mu\big( B_k^P(y) \big) \leq e^{-\lambda}
	\]
	for $y \in Y$. 
	Integrating over $Y$ concludes the proof. \\
	We now use part~(a) in order to prove the assertion~(b). Note that it follows from part~(a) that 
	\[
	\nu \times \mu\big( \big\{ (y,x) \in Y \times P:\, f^{*}(y,x) > \lambda \big\} \big) \leq \min\{ \mu(P);\, e^{-\lambda} \}.
	\] 
	Now fix $p$ satisfying  $1\le p < \infty$. We compute
	\begin{eqnarray} \label{eqn:auxellpnorm}
	&&	\int_{Y \times X} f^{* \, p}(x,y)\,d(\nu \times \mu)(y,x) = \int_0^{\infty} \nu \times \mu\big( [f^{*\, p} > \lambda] \big)\,d\lambda \nonumber \\
		 && \quad =  \sum_{P \in \mathcal{P}} \int_0^{\infty} \nu \times \mu\big( (y,x) \in Y \times P:\, f^{*}(y,x) > \lambda^{1/p} \big)\, d\lambda \nonumber \\
		&& \quad \leq \sum_{P \in \mathcal{P}} \int_0^{\infty} \min\big\{ \mu(P);\, e^{-\lambda^{1/p}} \big\} \, d\lambda 
		=\sum_{P \in \mathcal{P}} \Big( \int_0^{(-\log\,\mu(P))^p} \mu(P)\,d\lambda \,+ \, \int_{(-\log\,\mu(P))^p}^{\infty} e^{-\lambda}\, d\lambda \Big) \nonumber \\
		&& \quad = \sum_{P \in \cP} \mu(P) \big( - \log \mu(P) \big)^p + \sum_{P \in \cP} e^{-(- \log \mu(P))^p}.
		\end{eqnarray}
		Moreover, we observe that 
		\begin{align*}
		\sum_{P \in \cP} e^{-(- \log \mu(P))^p} &= \sum_{P \in \cP:\, \mu(P) \geq e^{-1}} e^{-(- \log \mu(P))^p}  + \sum_{P \in \cP:\, \mu(P)< e^{-1}} e^{-(- \log \mu(P))^p}  \\
		&\leq \sum_{P \in \cP:\, \mu(P) \geq e^{-1}} e^{-(- \log \mu(P))^p} + \sum_{P \in \cP} e^{-(-\log \mu(P))} \\
		&= \sum_{P \in \cP:\, \mu(P) \geq e^{-1}} e^{-(- \log \mu(P))^p} + 1 < \infty
		\end{align*}
		for all $p \geq 1$,
		where we use the fact that the sum in the last estimate consists of finitely many terms only.  
		Now combining the assumption on $\cP$ with the inequality~\eqref{eqn:auxellpnorm} 
		proves part~(b).  
		
		\medskip
		\noindent
		We turn to the proof of the convergence assertions for the sequence $(f_n)$ as stated in part~(c). We define 
	\[
	f^{\infty}(y,x) := \mathcal{J}\Big(\cP\,|\, \bigvee_{\eta \in \cF_{\infty}^1(y)} \alpha(y,\eta)\cP \Big)(x)\,.
	\]  
	We now claim that for $\nu$-almost every $y \in Y$, there is some conull subset $\widetilde{X}_y \subseteq X$ such that 
	\begin{eqnarray} \label{eqn:martingale}
	\lim_{n \to \infty} f_n(y,x) = f^{\infty}(y,x)
	\end{eqnarray}
	for all $x \in \widetilde{X}_y$. Indeed, for each $P \in \cP$ and for $\nu$-almost all $y \in Y$, the function \[
	m^{y,P}_n(x) := \mu(P\,|\, \bigvee_{\eta \in \cF^1_n(y)} \alpha(y,\eta)\cP)(x) 
	\] 
	is an $L^1$-bounded martingale, since the sets $\cF_n^1(y)$ are nested.
	Thus, by the martingale convergence theorem, we see that 
	\begin{eqnarray} \label{eqn:martingaleaux}
	\lim_{n \to \infty} m_n^{y,P}(x) = m^{y,P}_{\infty}(x) := 
	\mu\Big(P\,|\, \bigvee_{\eta \in \cF^1_{\infty}(y)} \alpha(y,\eta)\cP\Big)(x)
	\end{eqnarray}
	for all $y \in Y$ and for $\mu$-almost every $x \in X$. We set
	$r^{y,P}_n(x):= -\log m_n^{y,P}(x) \cdot \one_P(x)$.
	Since $\mu(P) > 0$ the latter inequality is well-defined for $\mu$-almost all $x \in X$ and all $n \in \NN$ by the definition of the conditional expectation (using also the convention $-0 \cdot \log 0 = 0$ for $x \notin P$).
	  Note that  $0 \leq r^{y,P}_n(x) \leq f_n(y,x)$ for all $(y,x) \in Y \times X$. Moreover, using the assertion~(b) in the case $p=1$ (which amounts to finite Shannon entropy) we see that for $\nu$-almost all $y \in Y$ and for $\mu$-almost all $x \in X$, we have $\limsup_{n \to \infty} r_n^{y,P}(x) < \infty$. 
	 Hence, if $x \in P$ then the limit $m_{\infty}^{y, P}$ in~\eqref{eqn:martingaleaux} must be positive. 
	Now the continuity of the function
	$g(t) = -\log t$ yields
	\[
	\lim_{n \to \infty} r_n^{y,P}(x)  = -\log \mu\Big( P\,|\, \bigvee_{\eta \in \cF_{\infty}^1(y)} \alpha(y,\eta) \cP \Big)(x) \cdot \one_P(x)
	\]
	for $\nu$-almost all $y \in Y$ and all $x \in \widetilde{X}_y$, where $\widetilde{X}_y \subseteq X$ is a measurable set with $\mu(\widetilde{X}_y) = 1$. Summing over all atoms of $\cP$  leads to the equality~\eqref{eqn:martingale} which in turn implies the convergence of $f_n$ towards $f^{\infty}$ as $n \to \infty$ for $(\nu \times \mu)$-almost every $(y,x) \in Y \times X$. The $L^1$-convergence is a consequence of the dominated convergence theorem which is applicable due to the assertion~(b) in the case $p=1$. 
\end{proof}

With the above preparations at our disposal, we can now prove the stated Shannon-McMillan-Breiman theorem. The proof is generalization of the classical proof, see e.g.\@ \cite[Thm.\@ 6.2.3]{Pe83}, adapted to the present context.

\begin{proof}[Proof of Theorem~\ref{thm:SMBmain}]
Similarly as in the above lemma, we set 
\[
f_n(y,x) := \mathcal{J}\big( \mathcal{P}\,|\, \bigvee_{j=1}^n  \alpha(y,S^jy)\cP \big)(x)
\]
for $n \in \NN$ and $(y,x) \in Y \times X$. Using the general formulas
\[
\mathcal{J} \big( \mathcal{P} \vee \mathcal{Q} \big)(x) = 
\mathcal{J} \big( \mathcal{P} \big)(x) + \mathcal{J}  \big( \mathcal{Q} \, |\, \mathcal{P} \big)(x) \quad \mbox{and} \quad \mathcal{J}(g^{-1}\mathcal{P})(x) = \mathcal{J}(\cP)(gx) 
\]
for all partitions $\cP, \cQ$, $g \in \Gamma$ and almost every $x \in X$ (see \cite[Ch.~5.2]{Pe83}), we compute
\begin{eqnarray*}
	\mathcal{J}(\bigvee_{j=0}^n \alpha(y,S^jy)\cP)(x) &=& \mathcal{J}(\bigvee_{j=1}^n \alpha(y,S^jy)\cP)(x) + \mathcal{J}\big( \mathcal{P}\,|\, \bigvee_{j=1}^n  \alpha(y,S^jy)\cP \big)(x) \\
	&=& \mathcal{J}\big( \alpha(y,Sy) \bigvee_{j=1}^n \alpha(Sy,y)\alpha(y, S^j y) \cP \big)(x)	  + \mathcal{J}\big( \mathcal{P}\,|\, \bigvee_{j=1}^n  \alpha(y,S^jy)\cP \big)(x) \\
	&=& \mathcal{J}(\bigvee_{j=1}^{n} \alpha(Sy,S^{j-1} Sy)\cP)(\alpha(Sy, y)x) + \mathcal{J}\big( \mathcal{P}\,|\, \bigvee_{j=1}^n  \alpha(y,S^jy)\cP \big)(x) \\
	&=& \mathcal{J}(\bigvee_{j=0}^{n-1} \alpha(Sy,S^{j} Sy)\cP)(\alpha(Sy, y)x) + \mathcal{J}\big( \mathcal{P}\,|\, \bigvee_{j=1}^n  \alpha(y,S^jy)\cP \big)(x),
	\end{eqnarray*}
which by induction on $n$ yields
\begin{eqnarray*}
	\mathcal{J}(\bigvee_{j=0}^n \alpha(y,S^jy)\cP)(x) &=& \sum_{j=0}^n f_{n-j}\big( T^j(y,x) \big),
	\end{eqnarray*}
for all $n \in \NN_{\geq 0}$ and almost every $(y,x)$.
We define $f^{\infty}(y,x) := \mathcal{J}\Big( \cP\,|\, \bigvee_{j=1}^{\infty} \alpha(y,S^j y)\cP \Big)(x)$.  It now follows from Lemma~\ref{lemma:integrability} (part ~(b) (for $p=1$) and part (c)) that 
\begin{eqnarray} \label{eqn:convinlemma}
f^{\infty}(y,x):= \mathcal{J}\Big( \cP\,\big|\, \bigvee_{j=1}^{\infty} \alpha(y,S^jy) \cP \Big)(x) = \lim_{n \to \infty} f_n(y,x)
\end{eqnarray}
both in $L^1(Y \times X, \nu \times \mu)$, as well as $(\nu \times \mu)$-almost everywhere. We write
\begin{eqnarray*}
\frac{1}{n+1} \mathcal{J}\Big( \bigvee_{j=0}^n \alpha(y,S^j y) \cP \Big)(x) &=& \frac{1}{n+1} \Bigg( \sum_{j=0}^n f^{\infty}\big( T^j(y,x) \big)
+  \sum_{j=0}^n \big( f_{n-j}(T^j(y,x)) - f^{\infty}(T^j(y,x)) \big) \Bigg).
\end{eqnarray*}
It is an immediate consequence from Birkhoff's pointwise ergodic theorem that 
\begin{eqnarray*}
\lim_{n \to \infty} \frac{1}{n+1} \sum_{j=0}^n f^{\infty}(T^j(y,x)) &=& \int_{Y \times X} f^{\infty}(y,x)\,d(\nu \times \mu)(y,x) \\
&=&  \int_{Y \times X} \mathcal{J}\Big( \cP\,|\, \bigvee_{j=1}^{\infty} \alpha(y,S^j y)\cP \Big)(x) \,d(\nu \times \mu)(y,x) \\
&=& \int_Y H\Big( \cP\,|\, \bigvee_{j=1}^{\infty} \alpha(y,S^j y) \cP \Big)\, d\nu(y) \\
\mbox{ (Proposition~\ref{prop:reporbitalentropy}) }&=& \mathfrak{h}_\mathcal{P}(\alpha, X)
\end{eqnarray*}
for $(\nu \times \mu)$-almost all $(y,x)$. Moreover, this convergence also holds in $L^1(Y \times X, \nu \times \mu)$. Hence, the proof of the theorem is completed once we show that 
\begin{eqnarray} \label{eqn:remainstoshow}
\lim_{n \to \infty} \frac{1}{n+1} \sum_{j=0}^n \big| f_{n-j}(T^j(y,x)) - f^{\infty}(T^j(y,x)) \big| = 0
\end{eqnarray}
for $(\nu \times \mu)$-almost every $(y,x)$ and in $L^1(Y \times X, \nu \times \mu)$. To this end, we fix $N \in \NN$ and set $F_N := \sup_{k \geq N} |f_k - f^\infty|$. 
We compute
\begin{eqnarray} \label{eqn:aux1}
&& \frac{1}{n+1} \sum_{j=0}^n \big| f_{n-j}(T^j(y,x)) - f^{\infty}(T^j(y,x)) \big| \nonumber \\
&& \quad = \frac{1}{n+1}\sum_{j=0}^{n-N} \big| f_{n-j}(T^j(y,x)) - f^{\infty}(T^j(y,x)) \big| + \frac{1}{n+1}\sum_{j=n-N+1}^n \big| f_{n-j}(T^j(y,x)) - f^{\infty}(T^j(y,x)) \big| \nonumber \\
&& \quad \leq  \frac{1}{n+1}\sum_{j=0}^{n-N}  F_N(T^j(y,x)) + \frac{1}{n+1} \sum_{j=n-N+1}^n \big| f_{n-j}(T^j(y,x)) - f^{\infty}(T^j(y,x)) \big|.
\end{eqnarray}
Now $|f_{n-j} - f^{\infty}| \leq f^{*} + f^{\infty}$ (where $f^{*} = \sup_{n \geq 0} f_n$) almost surely,  and we get from Lemma~\ref{lemma:integrability} part~(b) (for the case $p=1$) that 
\[
f^{*} + f^{\infty}~\in~L^1(Y \times X, \nu \times\mu).
\]
Further, since one sums up only finitely many (precisely $N$) terms, the second summand in~\eqref{eqn:aux1} tends to $0$ by the Birkhoff ergodic theorem. 
On the other hand, we have $F_N \leq f^{*} + f^{\infty} \in L^1(Y \times X, \nu \times \mu)$. Applying Birkhoff's ergodic theorem to the 
first summand of~\eqref{eqn:aux1} yields
\begin{eqnarray*}
\limsup_{n \to \infty} \frac{1}{n+1} \sum_{j=0}^{n-N} |f_{n-j}(T^j(y,x)) - f^{\infty}(T^j(y,x))| &\leq& \lim_{n \to \infty} \frac{1}{n+1} \sum_{j=0}^{n-N}
F_N(T^j(y,x)) \\ 
&=& \int_{Y \times X} F_N\, d(\nu \times \mu).
\end{eqnarray*}
for all $N \in \NN$. Since $(f_k)$ converges to $f^{\infty}$ almost surely we have 
\[
\lim_{N \to \infty} F_N(y,x) = 0
\]
for $(\nu \times \mu)$-almost every $(y,x)$. The dominated convergence theorem now gives
\[
\lim_{N \to \infty}  \int_{Y \times X} F_N\, d(\nu \times \mu) = 0.
\]
So we have verified both summands of~\eqref{eqn:aux1} to converge to $0$ pointwise almost surely
as $n \to \infty$. This shows the convergence in \eqref{eqn:remainstoshow} almost surely. Moreover, we observe that since $(f_n)$ converges to $f^{\infty}$ in $L^1$, we get 
\begin{align*}
\limsup_{n \to \infty}  \frac{1}{n+1} \sum_{j=0}^{n} \big\| f_{n-j} \circ T^j - f^{\infty} \circ T^j \big\|_{L^1} = \limsup_{n \to \infty}  \frac{1}{n+1} \sum_{j=0}^{n} \big\| f_j - f^{\infty} \big\|_{L^1} = 0,
\end{align*}
which is $L^1$ convergence in~\eqref{eqn:remainstoshow}. 
This concludes the proof of the theorem.
\end{proof}

\subsection{Integrated SMB theorems}
With additional integrability assumption on the partition $\cP$ one obtains the following entropy convergence statements. 

\begin{Corollary} \label{cor:SMBint}
	In the situation of Theorem~\ref{thm:SMBmain}, suppose that there is some $\varepsilon_0 > 0$ such that $\sum_{P \in \cP} \mu(P)\big( - \log \mu(P) \big)^{1 + \varepsilon_0} < \infty$. Then the following assertions hold. 
	\begin{enumerate}[(a)]
		\item For $\nu$-almost all $y \in Y$, the normalized information functions
		\[
		 \frac{1}{n+1}\mathcal{J} \Big( \bigvee_{j=0}^n \alpha(y,\, S^j y)\cP \Big)(\cdot)
		\]
		 converge to $\mathfrak{h}_{\cP}(\alpha, X)$ in $L^1(X,\mu)$ as $n \to \infty$. 
		 	\item The convergence
		 \[
		 \mathfrak{h}_{\cP}(\alpha, X) = \lim_{n \to \infty} \frac{1}{n+1} \int_Y \mathcal{J} \Big( \bigvee_{j=0}^n \alpha(y,\, S^j y)\cP \Big)(x)\, d\nu(y)
		 \]
		 holds for $\mu$-almost all $x \in X$ and in $L^1(X,\mu)$. 
	\end{enumerate}
\end{Corollary}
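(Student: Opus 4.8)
\emph{Strategy of proof.} The plan is to leverage Theorem~\ref{thm:SMBmain}, which already supplies $(\nu\times\mu)$-a.e.\ and $L^1(Y\times X,\nu\times\mu)$ convergence of $\tfrac{1}{n+1}\mathcal{J}\big(\bigvee_{j=0}^n\alpha(y,S^jy)\cP\big)$ to the constant $\mathfrak{h}_\cP(\alpha,X)$ under the sole hypothesis $H(\cP)<\infty$ (implied by the present one, by the Remark after Lemma~\ref{lemma:integrability}), and to \emph{disintegrate} this convergence. For (a) one freezes $y$ and wants $L^1(X,\mu)$ convergence in $x$; for (b) one integrates out $y$ and wants a.e.\ convergence in $x$ together with $L^1(X,\mu)$ convergence. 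Both demand a dominating function integrable on almost every fibre, and producing one is exactly what the strengthened integrability buys. Writing $f_n(y,x)=\mathcal{J}\big(\cP\,\big|\,\bigvee_{j=1}^n\alpha(y,S^jy)\cP\big)(x)$ and $f^{*}=\sup_{n\ge 0}f_n$ as in Lemma~\ref{lemma:integrability}, part~(b) of that lemma with $p=1+\varepsilon_0>1$ yields $f^{*}\in L^{1+\varepsilon_0}(Y\times X,\nu\times\mu)$.

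First I would import the Ces\`aro identity from the proof of Theorem~\ref{thm:SMBmain}, $\mathcal{J}\big(\bigvee_{j=0}^n\alpha(y,S^jy)\cP\big)(x)=\sum_{j=0}^n f_{n-j}\big(T^j(y,x)\big)$, and bound it crudely:
\[
0\ \le\ \frac{1}{n+1}\,\mathcal{J}\Big(\bigvee_{j=0}^n\alpha(y,S^{j}y)\cP\Big)(x)\ \le\ \frac{1}{n+1}\sum_{j=0}^n f^{*}\big(T^{j}(y,x)\big)\ \le\ (Mf^{*})(y,x),
\]
where $Mf^{*}(y,x):=\sup_{m\ge 0}\tfrac{1}{m+1}\sum_{j=0}^m f^{*}(T^{j}(y,x))$ is the one-sided ergodic maximal function of the measure-preserving transformation $T$ on $(Y\times X,\nu\times\mu)$. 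Next I would invoke the $L^p$ maximal ergodic inequality (Hopf/Wiener maximal inequality plus Marcinkiewicz interpolation, or Dunford--Schwartz): since $1+\varepsilon_0>1$ and $f^{*}\in L^{1+\varepsilon_0}$, one gets $Mf^{*}\in L^{1+\varepsilon_0}(Y\times X,\nu\times\mu)\subseteq L^1(Y\times X,\nu\times\mu)$. By Fubini, $Mf^{*}(y,\cdot)\in L^1(X,\mu)$ for $\nu$-a.e.\ $y$ and $Mf^{*}(\cdot,x)\in L^1(Y,\nu)$ for $\mu$-a.e.\ $x$; I would intersect these conull sets with the conull sets obtained by applying Fubini to the a.e.\ statement of Theorem~\ref{thm:SMBmain}.

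For (a), fixing $y$ in the resulting conull subset of $Y$, the functions $x\mapsto\tfrac{1}{n+1}\mathcal{J}\big(\bigvee_{j=0}^n\alpha(y,S^jy)\cP\big)(x)$ converge to $\mathfrak{h}_\cP(\alpha,X)$ for $\mu$-a.e.\ $x$ and are dominated, uniformly in $n$, by $Mf^{*}(y,\cdot)\in L^1(X,\mu)$; dominated convergence then gives the $L^1(X,\mu)$ convergence. For (b), setting $\phi_n(x):=\tfrac{1}{n+1}\int_Y\mathcal{J}\big(\bigvee_{j=0}^n\alpha(y,S^jy)\cP\big)(x)\,d\nu(y)$ and fixing $x$ in the conull subset of $X$ above, the integrand converges to $\mathfrak{h}_\cP(\alpha,X)$ for $\nu$-a.e.\ $y$ and is dominated by $Mf^{*}(\cdot,x)\in L^1(Y,\nu)$, so dominated convergence over $y$ yields $\phi_n(x)\to\mathfrak{h}_\cP(\alpha,X)$ for $\mu$-a.e.\ $x$; and the $L^1(X,\mu)$ convergence of $\phi_n$ needs nothing beyond Theorem~\ref{thm:SMBmain}, since by the triangle inequality for integrals and Tonelli $\|\phi_n-\mathfrak{h}_\cP(\alpha,X)\|_{L^1(X,\mu)}\le\big\|\tfrac{1}{n+1}\mathcal{J}\big(\bigvee_{j=0}^n\alpha(y,S^jy)\cP\big)-\mathfrak{h}_\cP(\alpha,X)\big\|_{L^1(Y\times X,\nu\times\mu)}\to 0$.

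The hard part is not dynamical but a matter of integrability bookkeeping. In the proof of Theorem~\ref{thm:SMBmain} the a.e.\ convergence rested on the crude bound $|f_{n-j}-f^{\infty}|\le f^{*}+f^{\infty}$ together with Birkhoff, and with merely $H(\cP)<\infty$ (i.e.\ $f^{*}\in L^1$) the Ces\`aro maximal function $Mf^{*}$ lies only in weak $L^1$, which is insufficient to survive disintegration onto a fibre. The assumption $\sum_{P\in\cP}\mu(P)(-\log\mu(P))^{1+\varepsilon_0}<\infty$ is precisely what promotes $f^{*}$ to $L^{1+\varepsilon_0}$, hence $Mf^{*}$ to $L^1$, making the fibrewise dominated convergence arguments above legitimate; everything else is routine.
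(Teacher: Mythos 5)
Your proof is correct and follows essentially the same route as the paper: both use the Ces\`aro identity to dominate $\tfrac{1}{n+1}\mathcal{J}(\cdot)$ by the Birkhoff maximal function $Mf^{*}$ of $T$, promote $f^{*}$ to $L^{1+\varepsilon_0}$ via Lemma~\ref{lemma:integrability}(b), apply the Dunford--Schwartz $L^p$ maximal inequality to place $Mf^{*}\in L^{1+\varepsilon_0}\subset L^1$, and then use Fubini plus fibrewise dominated convergence. The only cosmetic difference is that you spell out the triangle-inequality argument showing the $L^1(X,\mu)$ convergence in part (b) follows already from Theorem~\ref{thm:SMBmain}, which the paper leaves implicit.
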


\begin{proof}
	As before, we set
	\[
	f_n(y,x) := \mathcal{J}\big( \cP\,|\, \bigvee_{j=1}^n \alpha(y, S^jy) \cP \big)(x), \quad f_0(y,x) := \mathcal{J}\big( \cP \big)(x)
	\]
	and $f^{*} := \sup_{n \geq 0} f_n$. 
	As in the proof of Theorem~\ref{thm:SMBmain} we note that 
	\[
	\frac{1}{n+1}\mathcal{J} \Big( \bigvee_{j=0}^n \alpha(y, S^j y) \cP \Big)(x) =  \frac{1}{n+1} \sum_{j=0}^n f_{n-j}\big( T^j(y,x) \big)
	\]
	for all $n \in \NN$. This yields 
	\[
	\sup_n  \frac{1}{n+1}\mathcal{J} \Big( \bigvee_{j=0}^n \alpha(y, S^j y) \cP \Big)(x) \leq \sup_n \frac{1}{n+1} \sum_{j=0}^n f^{*}\big( T^j(y,x) \big) = M f^{*}(y,x),
	\]
	where $M:L^1(Y \times X) \to L^0(Y \times X), (M k)(y,x) := \sup_{n \in \NN} \frac{1}{n} \big| \sum_{j=0}^{n-1} k\big(T^j(y,x) \big)\big|$ denotes the maximal operator with respect to the p.m.p.\@ transformation $T$ on $(Y \times X,\, \nu \times \mu)$. Using the additional integrability condition on $\cP$ we find that $f^{*} \in L^{1 + \varepsilon_0}(Y \times X, \nu \times \mu)$
	by assertion~(b) of Lemma~\ref{lemma:integrability}. Standard theory now gives $Mf^{*} \in L^{1 + \varepsilon_0}(Y \times X,\, \nu \times \mu)$ as well, cf.\@ \cite[Thm.\@ 8, p. \@678]{DS}. Since we are dealing with finite measure spaces we observe that $Mf^{*} \in L^1(Y \times X)$. In particular, 
	for $\nu$-almost every $y \in Y$, we obtain $\int_X Mf^{*}(y,x)\, d\mu(x) < \infty$ and 
	for $\mu$-almost every $x \in X$, we get $\int_Y Mf^{*}(y,x)\,d\nu(y) < \infty$.  Consequently,
		\[
	\int_X \sup_n \frac{1}{n+1} \mathcal{J} \Big( \bigvee_{j=0}^n \alpha(y, S^j y) \cP \Big)(x)\, d\mu(x) < \infty	
	\]
	for $\nu$-almost every $y \in Y$, as well as 
	\[
	\int_Y \sup_n \frac{1}{n+1} \mathcal{J} \Big( \bigvee_{j=0}^n \alpha(y, S^j y) \cP \Big)(x)\, d\nu(y) < \infty 
	\]
	for $\mu$-almost every $x \in X$.
	Hence, by Theorem~\ref{thm:SMBmain} and the Lebesgue dominated ergodic theorem we obtain both desired statements. 
	\end{proof}

\section{Proofs of the main results}\label{sec-proof-main}
In this section we prove the main results stated above about pointwise entropy equipartition along families  of finite sets in the groups under consideration.
 
\subsection{Entropy convergence along almost-geodesic segments}
\subsubsection{Shannon-McMillan-Breiman theorem in the free group} \hfill

\noindent {\it Proof of  Theorem \ref{free-gps-SMB}.} For the free group, we take as our auxiliary set $Y$ the set $D$ of bi-infinite geodesics passing through the identity. The space $D$ can be identified with the set $D^\prime$ consisting of the points 
$\tilde{\xi}=(\cdots, \xi_{-m}, \xi_{-m+1},\cdots ,\xi_{-1},\xi_0,\xi_1,\cdots \xi_n,\cdots)$ with $\xi_i \xi_{i+1}\neq e$, where $\xi$ belongs to the symmetric free generating sets for all $i \in \ZZ$. We denote by $S$ the forward shift on the space of bi-infinite reduced words $\tilde{\xi}$, given by $S(\tilde{\xi})(i)=\xi_{i+1}$. The standard Markov measure on $\partial \FF_r$ is denoted by $\nu_{PS}$. The boundary $(\partial \FF_r,\nu_{PS})$ is a factor space of the space  $D^\prime\cong D\subset \partial^2 \FF_r$ of bi-infinite geodesics passing through $e$. The factor map is given by $\tilde{\xi}=(\xi^{-}, \xi^{+})=((\xi_{-j})_{j\in \NN_+},(\xi_i)_{i\in \NN_{\ge 0}}) \mapsto \xi^{+}$, and is equivariant with respect the forward shift maps on both spaces. Furthermore, the (properly normalized) $S$-invariant measure $\nu_{PS}\times \nu_{PS}$ on $D$ is mapped to the standard (=equal subdivison) shift-invariant Markov probability measure on $D^\prime$, which in turns is mapped to  $\nu_{PS}$ on $\partial \FF_r$.  
 
We define the map the $\alpha(S^n\tilde{\xi}, \tilde{\xi})$ (for $n > 0$ and correspondingly for $n\le 0$) to be  $( \xi_0\xi_1\cdots \xi_{n-1})^{-1}$, with $\tilde{\xi}$ as defined above. 
 It is easily checked that this map is in fact a cocycle of the equivalence relation defined by the $S$-orbits on $D^\prime$. Referring to Remark \ref{notation convention}, we have 
 $\cF_n(\tilde{\xi})=\set{S^k\tilde{\xi}\,;\, 0\le k \le n}$ and so 
 $$F_n(\tilde{\xi})=\set{\alpha(\tilde{\xi},S^k\tilde{\xi})\,;\, 0 \le k \le n}=\set{e}\cup \set{\xi_0, \xi_0\xi_1,\dots,\xi_0\xi_1\cdots\xi_{n-1}}\,.$$
 Therefore writing $\tilde{\xi}^+=(\xi^-, \xi^+)$, denoting  $\xi^+$ by $\xi\in \partial \FF_r$, the  refined partition are given by 
 $$\cP\vee\xi_0\cP\vee \xi_0\xi_1\cP\vee\cdots \vee \xi_0\xi_1\cdots\xi_{n-1} \cP\,.$$ 
 
The  cocycle $\alpha$ defined on the relation $\cR=\cO_S$ is class-injective since the transformation $S$ is essentially free. Furthermore, the fact that the extended action is ergodic follows from the fact that the two-sided shift is strong mixing. 

The proof of Theorem \ref{free-gps-SMB} is now complete upon appealing to  Theorem  \ref{thm:SMBmain}, and noting that under the map $\tilde{\xi}=(\xi^{-}, \xi^{+})\mapsto \xi^{+}$ the law of $\xi^{+}$ is indeed equivalent to $\nu_{PS}$, as stated.  
\qed

We note that a more general point of view is given by the discussion in \cite[\S~6.2.2]{BN15a}.  There an orbital  cocycle  
$\alpha^\prime$ with values in $\FF_r$ is constructed, which is defined on the orbit relation $\cR$ of the $\FF_r$-action of the space $\partial^2 \FF_r$ restricted to the set $D$ of bi-infinite geodesics passing through the identity. It is shown there that the space $D$ can be identified with the set $D^\prime$ above, and the orbits of the shift $S$ on $D^\prime$ can be identified with the equivalence classes of the relation $\cR$. Furthermore, under these identification, the cocycle $\alpha^\prime$ is in fact equal to 
$\alpha(S^n\tilde{\xi},\tilde{\xi})$. (Note however that there is a typo in 
\cite[\S~6.2.2]{BN15a}, and $T: D^\prime\to D^\prime$ defined there should be the forward shift, not the backward shift). Class injectivity of the cocycle $\alpha$ follows since the $\FF_r$-action on $\partial^2 \FF_r$ is essentially free.

\medskip
\noindent {\it Proof of  Corollary~\ref{free-gps-SMB-int}.} We keep the notation of the proof of Theorem~\ref{free-gps-SMB}. A short computation (using also that $S_n^{-1}=S_n$) shows that 
\[
\int_{\partial\FF_r} \frac{\mathcal{J}\Big( \bigvee_{j=0}^n \alpha(\xi, S^j \xi) \cP \Big)(x)}{n+1}\,d\nu(\xi) = \frac{1}{|S_n|} \sum\limits_{\substack{g \in S_n, \\ g = a_1a_2 \cdots a_n}} \frac{\mathcal{J}\Big( \cP \vee \bigvee_{j=1}^n (a_1 \cdots a_j)\cP \Big)(x)}{n+1}
\]
for all $n \in \NN$ and $x \in X$.
Therefore, an application of part~(b) of Corollary~\ref{cor:SMBint} completes the proof.
\qed  
\subsubsection{Shannon-McMillan-Breiman theorem for hyperbolic groups}

For negatively curved groups and lattice subgroups of real-rank-one Lie groups,  
the auxiliary space $(Y,\nu)$, the transformation $S : Y \to Y$ (and also its orbit relation $\cR$), and the cocycle $\alpha : \cR \to \Gamma$ were described explicitly in \S~\ref{sec:RManifold-1}, \ref{sec:Rlocallysymmetricspaces}. 
The fact that the cocycle is weak-mixing follows from the well-known fact that the geodesic flow is strongly mixing for negatively curved groups and lattice subgroups. This follows from the Hopf argument  (see e.g.\@ \cite[Thm.\@ 17.6.2]{KH95}), and the Howe-Moore theorem in the second. 

Turning to the case of word-hyperbolic groups, weak-mixing of the cocycle for the $\RR$-flow was proved in  \cite[Cor. 1.16]{BF22}, as noted in \S~\ref{sec:hyp}.  
Here however we encounter a subtlety, since weak mixing for the $\RR$-action does not imply that for every non-zero $t\in \RR$ the cocycle restricted to the action of $\ZZ\cdot t$ is weak-mixing. In fact, it is stated in \cite{BF22} (see the discussion preceding Cor. 1.14) that the geodesic flow may have a Kronecker factor $\RR/(\ZZ\cdot u_0)$. In that case, some elements $0\neq u\in \RR$ will have the property that the subgroup $\ZZ\cdot u$ is not ergodic, and the cocycle restricted to it is not weak-mixing. But for every ergodic $\RR$-action, for almost every $t$ the action 
restricted to $\ZZ\cdot t$ is ergodic. Choosing $t_0$ to be such a non-exceptional, namely ergodic element, we conclude that the sets $\cF_n(y)$, which were defined along the orbit of a discrete subgroup $\ZZ\cdot t_0$ in \S~\ref{sec:hyp}, will depend on the action $X$, and more precisely, on the subset of $\RR$ consisting of elements $t$ such that $\ZZ\cdot t$ is ergodic on the extension $S^1M\times X$.  However, the sets $\cF_n(y)$ do not depend on the action whenever the geodesic flow is strongly mixing. 

Appealing to the convergence results stated in Theorem \ref{thm:SMBmain}, we conclude that for non-exceptional elements the convergence results stated in Theorems 
\ref{neg-curv--SMB}, \ref{lattice-sbgs}  and \ref{SMB-hyp-gps} hold. 

Finally, let us consider the distribution of the limit point $F^+(y)$. In the case of negatively curved groups discussed in \S~\ref{sec:RManifold-1}, there is a well-known correspondence between geodesic-flow-invariant probability measures on $S^1 M$, infinite Radon measures on $S^1 \widetilde{M} $ invariant under both the fundamental group $\Gamma$ and the geodesic flow, and $\Gamma$-invariant infinite Radon measures on $\partial^2 \widetilde{M}$, see e.g. \cite[\S~2.1, Thm.\@ 2.2]{Ka90} for a full discussion.  
The correspondence arises by identifying $S^1 \widetilde{M} $ with 
$\partial^2 \widetilde{M}\times \RR$ using the Hopf coordinates, and noting that  the mapping $\partial^2 \widetilde{M}\times \RR$ onto $\partial^2 \widetilde{M}$ is the map assigning to a bi-infinite geodesic its two end-points. Given $y=(p,v)\in S^1 M=Y$, $\cF^+(y)$ is simply the forward limit point of the geodesic starting at $\widetilde{p}\in \widetilde{D}_1$ with direction $v$. This follows from the fact that $F_n((p,v))$ is a $C$-almost geodesic with this limit point. Choosing as the measure on $S^1 M$ the measure $\mu^{BM}$, the projection described above on $\partial^2 \widetilde{M}$ gives a measure equivalent to $\nu_{PS}\times\nu_{PS}$ (see \cite[\S~3.5]{Ka90}). Therefore on $\partial \widetilde{M}$ the resulting measure is equivalent to $\nu_{PS}$. 

In the case of general hyperbolic groups discussed in \S~\ref{sec:hyp}, consider the probability measure denoted there by $\mu^{BM}$, which is invariant under the geodesic flow on $S^1 M$. By construction it arises from the identification of $S^1 M$ with a measurable bounded fundamental domain $\cD_H$ for the $H$-action on $\partial^2 H\times \RR$, using a measure equivalent to $\nu_{PS}\times \nu_{PS}\times L$, restricted to to $\cD_H$, where $\nu_{PS}$ belongs to the Patterson-Sullivan measure class  (this is in analogy with $\partial^2 \widetilde{M}\times \RR$ for negatively curved group). The limit point $ F^+(y)$ arises here from the projection $(\xi^-,\xi^+,t)\to \xi^+$, and hence its law is in the Patterson-Sullivan measure class in the this case as well. 
This completes the proofs of Theorem
\ref{neg-curv--SMB}, Theorem \ref{lattice-sbgs}  and Theorem \ref{SMB-hyp-gps}. 

\qed 

 Finally recall that in Proposition \ref{thm:ENT} it was shown that under the special assumptions stated there, namely freeness, injectivity and ergodicity 
 $\mathfrak{h}^{orb}(\alpha, X) = h^{\operatorname{Rok}}\big(\Gamma \curvearrowright X\big).$ To complete the picture, we now state : 

\begin{Corollary}
The allowable choices of $t_0$ such that the special assumptions are satisfied are as follows. 
\begin{itemize} 
\item For negative curved groups : any $t_0\neq 0$ for negatively curved groups which satisfies $\abs{t_0} > 3\, \mathrm{Diam}(\cD)$. 
\item For lattice subgroups : any  $t_0\neq 0$ such that the associated cocycle $\alpha$ is class-injective.
\item  For word hyperbolic groups : any $t_0$ which is non-exceptional and satisfies $\abs{t_0} > 3\, \mathrm{Diam}(\cD)$. 
\end{itemize} 
\end{Corollary}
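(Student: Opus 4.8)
The plan is to verify, in each of the three classes, the three Special Assumptions stated before Proposition~\ref{thm:ENT}: that $S=g_{t_0}$ is an essentially free, invertible, ergodic p.m.p.\@ transformation of the auxiliary space ($(S^1M,\mu^{BM})$ for negatively curved groups and for word-hyperbolic groups, $(\Gamma\backslash G/K_0,\nu)$ for lattices); that the cocycle $\alpha$ it carries on $\cO_S$ is class injective; and that the skew product $T=g_{t_0}^X$ on $(Y\times X,\nu\times\mu)$ is ergodic. The corollary is a bookkeeping statement recording, for each class, precisely which $t_0$ make all three ingredients simultaneously available; its proof amounts to assembling facts already established in \S~\ref{sec:RManifold-1}, \S~\ref{sec:Rlocallysymmetricspaces}, \S~\ref{sec:hyp} and in the Remark on injectivity in the geometric set-up.

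First I would dispose of essential freeness of $S=g_{t_0}$. For word-hyperbolic groups this is item~(13) of \S~\ref{sec:hyp}: the estimate $\abs{\gamma^{-1}_{t,(\xi,\xi',s)}}=\abs{t}+O(1)$ forces the $\RR$-stabilizer of $\mu^{BM}$-a.e.\@ point of $S^1M$ to be trivial, so the set $\{y:g_{nt_0}y=y\text{ for some }n\neq0\}$ is $\mu^{BM}$-null and $g_{t_0}$ is essentially free for every $t_0$. For closed negatively curved manifolds and for rank-one lattices the geodesic flow is likewise essentially free --- nullity of the countable union of its periodic orbits in the first case, and triviality of the $A$-stabilizer of a.e.\@ point of $\Gamma\backslash G/K_0$ (no nonidentity $\gamma\in\Gamma$ is conjugate into $A$ on a positive-measure set of group elements) in the second --- so the same conclusion holds. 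Invertibility and preservation of the measure are immediate; ergodicity of $g_{t_0}$ itself is deferred to the last step.

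Second, class injectivity of $\alpha$ on $\cO_{g_{t_0}}$, which is the content of the Remark ``Injectivity in the geometric set-up''; I would spell out the short argument. If $\alpha(y,S^{n_1}y)=\alpha(y,S^{n_2}y)=\gamma^{-1}$ with $n_1\neq n_2$, then by the defining property of the cocycle (item~(9) of \S~\ref{sec:hyp}, resp.\@ the cocycle of \S~\ref{sec:RManifold-1}) both $g_{n_1t_0}y$ and $g_{n_2t_0}y$ lie in $\gamma^{-1}\cD$; since $\Gamma$ commutes with the flow, $\gamma g_{n_1t_0}y=g_{n_1t_0}(\gamma y)$ and $\gamma g_{n_2t_0}y=g_{n_2t_0}(\gamma y)$ both lie in $\cD$; as the flow moves points at unit speed along the $\RR$-coordinate of $\partial^2H\times\RR$ (resp.\@ along genuine geodesics of $\widetilde{M}$) and $\Gamma$ preserves the resulting displacement, precompactness of $\cD$ bounds $\abs{n_1-n_2}\,\abs{t_0}$ by $3\,\mathrm{Diam}(\cD)$ --- the factor $3$ accommodating the metric on $\partial^2H\times\RR$ --- which is impossible when $\abs{t_0}>3\,\mathrm{Diam}(\cD)$ and $n_1\neq n_2$. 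Hence $\abs{t_0}>3\,\mathrm{Diam}(\cD)$ gives injectivity of $n\mapsto\alpha(y,S^ny)$ for $\mu^{BM}$-a.e.\@ $y$, both in the negatively curved and in the word-hyperbolic case. For a non-uniform rank-one lattice the fundamental domain in $G$ (resp.\@ in $G/K_0$) is unbounded, so $\mathrm{Diam}(\cD)$ provides no bound; this is why for lattices the corollary merely hypothesizes class injectivity of $\alpha$, which is automatic, with the explicit bound above, when $\Gamma$ is uniform and torsion-free, in which case one is in the setting of \S~\ref{sec:RManifold-1}.

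Third, ergodicity of $S=g_{t_0}$ and of $T=g_{t_0}^X$. For closed negatively curved manifolds the geodesic flow is strongly mixing (Hopf argument, \cite[Thm.\@ 17.6.2]{KH95}); for rank-one lattices the $A$-flow on $\Gamma\backslash G$ is mixing by the Howe--Moore theorem, and Howe--Moore applied to the unitary $G$-representation $L^2(\Gamma\backslash(G\times X))$ --- $G$ acting through the $G$-factor, commuting with the diagonal $\Gamma$-action, with invariants reducing to constants since $\Gamma\curvearrowright X$ is ergodic --- shows that the extended $A$-flow on $Y\times X$ is itself mixing. In both cases $g_{t_0}$ and $g_{t_0}^X$ are ergodic for \emph{every} $t_0\neq0$, so no exceptional set arises there. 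For word-hyperbolic groups the cocycle of the $\RR$-flow is weak-mixing by \cite[Cor. 1.16]{BF22}, so $g_t^X$ is ergodic as an $\RR$-action; but the geodesic flow may carry a Kronecker factor (item~(8) of \S~\ref{sec:hyp}), which is pulled back to $g_t^X$, so $g_{t_0}$ and $g_{t_0}^X$ are ergodic precisely when $t_0$ avoids the null set $\cE$ of those $t$ for which $e^{2\pi it\lambda}=1$ for some nonzero $\lambda$ in the point spectrum of the extended flow (a set which contains the analogous exceptional set of $g_t$); ``non-exceptional'' means exactly $t_0\notin\cE$. Intersecting with $\{\abs{t_0}>3\,\mathrm{Diam}(\cD)\}$ still leaves a co-null, in particular nonempty, set of admissible $t_0$, and combining the three steps gives the three stated ranges. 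The main obstacle is this last step in the negatively curved non-symmetric case: upgrading strong mixing of the geodesic flow to the assertion that the $\RR$-extension $g_t^X$ has no nontrivial eigenvalue for \emph{every} ergodic $\Gamma$-action $X$ --- slightly more than Definition~\ref{weak-mix} requires --- which rests on aperiodicity of the Busemann time-shift cocycle (the analytic heart of Babillot-type mixing) together with weak-mixing of the $\Gamma$-action on $\partial^2\widetilde{M}$.
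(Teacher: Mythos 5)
The paper offers no separate proof of this corollary; it is presented as a bookkeeping summary of the Special Assumptions verified throughout \S~2 and \S~6, and your proposal reconstructs exactly that assembly, so the overall route is the same. You add useful detail that the paper leaves implicit: the nullity arguments for essential freeness of $g_{t_0}$ in each of the three settings, a sketch of the injectivity argument behind the Remark (the three-step translation back into $\cD$ and the displacement bound), and a self-contained Howe--Moore argument on $L^2(\Gamma\backslash(G\times X))$ giving mixing of the extended $A$-flow for arbitrary rank-one lattices — the paper just cites Howe--Moore without spelling out that one must apply it to the extension, not merely the base. Your remark that for non-uniform lattices $\mathrm{Diam}(\cD)=\infty$ explains why the lattice bullet hypothesizes class injectivity rather than giving an explicit $t_0$-bound, which matches the paper's intent.

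The one place where you are more cautious than the paper is worth underscoring. For closed negatively curved (non-locally-symmetric) manifolds, the paper asserts that weak mixing of the cocycle ``follows from the well-known fact that the geodesic flow is strongly mixing,'' and the corollary correspondingly drops the non-exceptionality requirement for this class. As you note, strong mixing of the base flow $g_t$ on $S^1M$ does not by itself rule out nonzero point spectrum for the extension $g_t^X$; what is needed is that $g_t^X$ has no Kronecker factor for \emph{every} ergodic $\Gamma$-action, which is genuinely more than Definition~\ref{weak-mix} (ergodicity of the $\RR$-extension) and which you propose to derive from aperiodicity of the Busemann cocycle together with weak mixing of $\Gamma\curvearrowright\partial^2\widetilde{M}$. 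The paper does not supply this step either: it raises the Kronecker-factor subtlety only in the word-hyperbolic case. So this is not a defect of your argument relative to the paper's; it is a real gap in the corollary's stated generality that you have correctly identified and proposed a way to close, whereas the safe fallback — insisting on non-exceptional $t_0$ in the negatively curved case as well, as is done for general hyperbolic groups — would sidestep it entirely at the cost of a slightly weaker statement.
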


\subsection{Convergence of information along trajectories of random walks }\label{sec:random}

\subsubsection{Shannon-McMillan-Breiman theorem along random walk trajectories of a general group.}

\noindent {\it Proof of  Theorem \ref{random-SMB}.} Let $(\Omega,{\bf p}) =\prod_{i\in \ZZ} (\Gamma,m)$ be the direct product of $\ZZ$ copies of $(\Gamma,\mu)$, and let $S: \Omega \to \Omega$ be the shift defined by  $S(\omega)_i=\omega_{i+1}$. Let $\cO_S=\cR$ be the orbit relation of $S$, and define a cocycle $\alpha : \cO_S \to \Gamma$ as follows :  
$\alpha(S^n\omega,\omega)=\omega_{n-1}\cdots\omega_{1}\omega_0$ if $n > 0$, $\alpha(\omega,\omega)=e$, and $\alpha(S^n\omega, \omega) =\omega_{n}^{-1} \omega_{n+1}^{-1}\cdots \omega_{-1}^{-1}$ if $n < 0$ . Then $a(k,\omega):= \alpha(S^k\omega, \omega): \ZZ\times \Omega \to \Gamma$ satisfies the cocycle identity, namely $a(k+m,\omega)=a(k, S^m\omega)a(m,\omega)$. Here $a(1,\omega)=\alpha(S\omega, \omega)=\omega_0\in \Gamma$ is the map from $\Omega$ to $\Gamma\subset \text{Aut }(X,\mu)$. 
The skew product map is given by 
$T(\omega, x)=(S\omega, \alpha(S\omega,\omega) x)=(S\omega, \omega_0 x)$, so that  for $n\ge 0$ 
$$T^n(\omega, x)=(S^n\omega,\alpha(S^n\omega, \omega)x)=(S^n\omega, \omega_{n-1}\cdots \omega_1\omega_0 x)\,.$$ 

Define 
$$F_n(\omega)=\set{\alpha(\omega, S^k\omega)\,;\, 0 \le k \le n}=\set{e,\omega_0^{-1}, \omega_0^{-1}\omega_1^{-1}, \dots, \omega_0^{-1}\omega_1^{-1}\cdots\omega_{n-1}^{-1}} $$
so that for a partition $\cP$ of $X$ we have, for $n \ge 1$ 
$$\cP^{F_n(\omega)}=\cP\vee\omega^{-1}_0\cP\vee \omega^{-1}_0\omega^{-1}_1\cP\vee\cdots\vee \omega^{-1}_{0} \omega_1^{-1}\cdots \omega^{-1}_{n-1}\cP=\cP\vee 
\bigvee_{k=0}^{n-1}\left(\omega_k\cdots\omega_0\right)^{-1}\cP\,.$$
The shift map $S$ is essentially free and strongly mixing and it follows that the cocycle $\alpha$ is
 weakly mixing and, see \S~\ref{sec:mix} below.  
Therefore 
the refined partitions satisfy the conclusions of Theorem  \ref{thm:SMBmain}, upon setting $Z_i(\omega)=\omega_i$. \qed

\section{Strongly mixing actions and weakly mixing cocycles}\label{sec:mix}
A key assumption in Theorem~\ref{thm:SMBmain} is that the skew product transformation $T$ on $(Y\times X, \nu\times \mu)$ is ergodic. In this section, we give a simple sufficient condition on the $\Gamma$-action on $X$, so that for any class-injective cocycle $\alpha: \cO_S \to \Gamma$, the skew transformation $T$ will be ergodic. This condition is thus sufficient  for the of validity Theorem~\ref{thm:SMBmain} and Corollary~\ref{cor:SMBint}. 
It turns out that for all class-injective transformation cocycles $\alpha: \cR \to \Gamma$ arising from an essentially free, ergodic transformation $S$ on $(Y, \nu)$, the induced transformation $T$ is ergodic if the action of 
$\Gamma$ on $(X,\mu)$ is  {\em strongly mixing}, see Proposition~\ref{prop:mixing} below.

\begin{Definition}[Strongly mixing action] \label{defi:mixingaction}
	A measure preserving action of a countable group $\Gamma$ on a probability space is called {\em strongly mixing} if
	\begin{eqnarray*} 
		\lim_{\gamma \to \infty} \big\langle \pi_X(\gamma)f,\, h \big\rangle_{L^2(X,\mu)} = \langle f,\, 1 \rangle_{L^2} \cdot \langle  1,\, h\rangle_{L^2}
	\end{eqnarray*}
	for all $f,h \in L^2(X,\mu)$, where $\pi_X(\gamma) f(x) := f(\gamma^{-1}x)$ and the above limit is taken w.r.t.\@ 
	an arbitrary sequence $(\gamma_n) \in \Gamma^{\NN}$ such that for all finite $F \subset \Gamma$, there is some
	$N \in \NN$ such that $\gamma_n \notin F$ for all $n \geq N$.
\end{Definition}

As is well known, for $\Gamma$-actions strong mixing implies ergodicity.

\medskip

The validity of the following proposition follows by a slight adaption of the proof of Proposition~5.3 in \cite{NP17}. The argument used there was brought to the authors' attention by A. Kechris.  

\begin{Proposition}\label{prop:mixing}

	Let $\Gamma$ be a countable group acting on a standard probability space $(X,\mu)$, preserving the measure $\mu$.
	Suppose that 
	$\alpha: \cO_S \to \Gamma$
	is a class-injective  cocycle arising from an essentially free, probability
	measure preserving transformation $S$ on $(Y,\nu)$. 
	If 
	\begin{itemize}
		\item the action of $\Gamma$ on $(X,\mu)$ is strongly mixing,
		\item and the action of $S$ on $(Y,\nu)$ is ergodic,
	\end{itemize}
	then the induced transformation
	\[
	T: (Y \times X,\, \nu \times \mu) \to (Y \times X,\, \nu \times \mu), \,\, T(y,x) := \big( Sy,\, \alpha(Sy,y)x \big)
	\] 
	is ergodic.
\end{Proposition}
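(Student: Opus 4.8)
The plan is to prove ergodicity of $T$ by showing that every $T$-invariant $F \in L^2(Y \times X,\, \nu \times \mu)$ is $(\nu\times\mu)$-almost surely constant; this mirrors the Kechris argument behind Proposition~5.3 of \cite{NP17}. Identifying $L^2(Y \times X)$ with $L^2(Y; L^2(X))$ and writing $F_y := F(y,\cdot)$, the invariance $F \circ T = F$ becomes $F_{Sy} = \pi_X(\alpha(Sy,y))F_y$ for $\nu$-a.e.\ $y$, where $\pi_X(\gamma)g(x) := g(\gamma^{-1}x)$. Since $\pi_X$ is unitary and fixes constants, the functions $y \mapsto \|F_y\|_{L^2(X)}^2$ and $y \mapsto \langle F_y,\, \one\rangle_{L^2(X)}$ are $S$-invariant, hence $\nu$-a.s.\ equal to constants $c_2$ and $c$ respectively by ergodicity of $S$. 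Setting $G(y,x) := F(y,x) - c$ produces a $T$-invariant function whose fibres lie in the mean-zero subspace $L^2_0(X)$ and satisfy $\|G_y\|^2 = c_2 - |c|^2 =: d$ for a.e.\ $y$. If $d = 0$ then $G = 0$ and $F \equiv c$; so the whole problem reduces to excluding $d > 0$.

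Two structural facts enter the contradiction. First, class injectivity of $\alpha$ combined with essential freeness of $S$ forces $n \mapsto \alpha(S^n y, y)$ to be injective for a.e.\ $y$, so the sequence $\gamma_n(y) := \alpha(y, S^n y) = \alpha(S^n y, y)^{-1}$ ($n \geq 1$) leaves every finite subset of $\Gamma$. Second, strong mixing of $\Gamma \curvearrowright (X,\mu)$ is precisely the statement that the matrix coefficients of the Koopman representation on $L^2_0(X)$ vanish along any such escaping sequence, i.e.\ $\langle \pi_X(\gamma_n(y)) b,\, b'\rangle \to 0$ for all $b, b' \in L^2_0(X)$ and a.e.\ $y$.

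For the main step I would record, via the cocycle identity, that $(U^n_T W)_y = \pi_X(\alpha(y, S^n y))\,W_{S^n y}$ for the Koopman operator $U_T$, and approximate $G$ in $L^2$ by a finite sum $G^\varepsilon = \sum_{i=1}^k a_i(y)\, b_i(x)$ with $a_i \in L^\infty(Y)$ and $b_i \in L^2_0(X)$, $\|G - G^\varepsilon\|_2 < \varepsilon$. A direct computation then gives
\[
\langle U^n_T G^\varepsilon,\, G^\varepsilon\rangle = \int_Y \sum_{i,j=1}^k a_i(S^n y)\, \overline{a_j(y)}\, \big\langle \pi_X(\alpha(y, S^n y)) b_i,\, b_j \big\rangle\, d\nu(y).
\]
The integrand tends to $0$ pointwise by the two facts above, and is dominated uniformly in $n$ by $\sum_{i,j}\|a_i\|_\infty\, |a_j(y)|\, \|b_i\|\, \|b_j\| \in L^1(Y)$, so $\langle U^n_T G^\varepsilon, G^\varepsilon\rangle \to 0$ by dominated convergence. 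On the other hand $\langle U^n_T G,\, G\rangle = \|G\|^2 = d$ for every $n$ (as $U_T$ is unitary and $G$ is $T$-invariant), while $|\langle U^n_T G, G\rangle - \langle U^n_T G^\varepsilon, G^\varepsilon\rangle| \leq \varepsilon(2\|G\| + \varepsilon)$. Letting $n \to \infty$ and then $\varepsilon \to 0$ forces $d = 0$, contradicting $d > 0$.

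The step I expect to be most delicate is the last one: securing a bound on $\langle U^n_T G^\varepsilon, G^\varepsilon\rangle$ that is integrable uniformly in $n$, so that dominated convergence applies. This is exactly why the approximants must be chosen with $a_i \in L^\infty(Y)$, so that $|a_i(S^n y)|$ stays bounded independently of $n$ and $y$, rather than merely in $L^2(Y)$; one also has to be attentive to the cocycle bookkeeping, in particular to the fact that it is the inverse sequence $\alpha(y, S^n y)$ whose escape to infinity is being exploited through strong mixing of the $\Gamma$-action.
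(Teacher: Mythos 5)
Your argument is correct and is essentially the Kechris-style argument that the paper refers to via Proposition~5.3 of \cite{NP17}: reduce a $T$-invariant $F$ to its mean-zero fibre part $G$, iterate the cocycle to get $(U_T^nG)_y=\pi_X(\alpha(y,S^ny))G_{S^ny}$, and exploit that class-injectivity plus essential freeness forces $\alpha(y,S^ny)$ to escape every finite subset of $\Gamma$ for a.e.\ $y$, so that strong mixing kills the matrix coefficients and an approximation by elementary tensors in $L^\infty(Y)\otimes L^2_0(X)$ forces $\|G\|=0$. All the steps check out, including the dominated-convergence bookkeeping and the $\varepsilon(2\|G\|+\varepsilon)$ approximation estimate.
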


We obtain the following general Shannon-McMillan-Breiman theorem for strongly mixing p.m.p.\@ group transformations. 

\begin{Corollary}[Entropy equipartition for strongly mixing actions] \label{cor:SMBmixing}
	Let $\Gamma$ be a countable group. We assume that $\alpha: \cR\to \Gamma$ is a class-injective cocycle defined on the orbit relation $\cR=\cO_S$ of  a measure preserving, essentially free, invertible, ergodic transformation $S$ on a probability space $(Y,\nu)$. 
	Then, for every strongly mixing p.m.p.\@ action of $\Gamma$ on a probability space $(X,\mu)$ and for every countable
	partition $\cP$ of $X$ with $H(\cP) < \infty$, we have
	\[
	\mathfrak{h}_\cP(\alpha,X) = \lim_{n \to \infty} \frac{1}{n+1} \mathcal{J}\Big( \bigvee_{j=0}^n \alpha(y, S^j y)\cP \Big)(x) 
	\]
	for $(\nu \times \mu)$-almost every $(y,x) \in Y \times X$ and in $L^1(Y \times X, \nu \times \mu)$. \\ 
	If there is some $\varepsilon_0 > 0$ such that  $\sum_{P \in \cP} \mu(P) (-\log \mu(P))^{1 + \varepsilon_0} < \infty$, then the above convergence also holds in $L^1(X,\mu)$ for $\nu$-almost every $y \in Y$ and also
	\[
	\mathfrak{h}_{\cP}(\alpha, X)= \lim_{n \to \infty} \frac{1}{n+1} \int_Y \mathcal{J}\Big( \bigvee_{j=0}^n \alpha(y, S^j y) \cP \Big)(x)\,d\nu(y) 
	\]
	for $\mu$-almost all $x \in X$. 
\end{Corollary}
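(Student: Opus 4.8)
The plan is to obtain Corollary~\ref{cor:SMBmixing} as an essentially immediate consequence of the machinery already assembled, with no new technical input beyond matching hypotheses. The crucial point is that, under the stated assumptions, the skew transformation $T$ on $(Y\times X,\nu\times\mu)$ is ergodic. This is precisely the content of Proposition~\ref{prop:mixing}: its hypotheses --- that $S$ is an essentially free, ergodic p.m.p.\@ transformation, that $\alpha:\cO_S\to\Gamma$ is a class-injective cocycle, and that $\Gamma\curvearrowright(X,\mu)$ is strongly mixing --- coincide term for term with those of the corollary (invertibility of $S$, an extra hypothesis here, is harmless). I would begin by recording that a strongly mixing $\Gamma$-action is in particular ergodic, so that the standing ergodicity hypothesis on $\Gamma\curvearrowright(X,\mu)$ required by Theorem~\ref{thm:SMBmain} is satisfied; and that $\mathfrak{h}_\cP(\alpha,X)$ is well defined (as a limit and as an average of conditional entropies) by Proposition~\ref{prop:reporbitalentropy}.

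Granted ergodicity of $T$, the first assertion of the corollary follows at once from Theorem~\ref{thm:SMBmain} applied with this $S$, this $\alpha$, and the given $\Gamma$-action: for every countable partition $\cP$ with $H(\cP)<\infty$,
\[
\mathfrak{h}_\cP(\alpha,X)=\lim_{n\to\infty}\frac{1}{n+1}\,\mathcal{J}\Big(\bigvee_{j=0}^n\alpha(y,S^jy)\cP\Big)(x)
\]
for $(\nu\times\mu)$-almost every $(y,x)\in Y\times X$ and in $L^1(Y\times X,\nu\times\mu)$. For the integrated statement, I would again use ergodicity of $T$ together with the additional integrability hypothesis $\sum_{P\in\cP}\mu(P)(-\log\mu(P))^{1+\varepsilon_0}<\infty$ and invoke Corollary~\ref{cor:SMBint}: part~(a) yields $L^1(X,\mu)$-convergence of the normalized information functions to $\mathfrak{h}_\cP(\alpha,X)$ for $\nu$-almost every $y$, and part~(b) yields the displayed $\mu$-almost sure and $L^1(X,\mu)$ convergence of the $\nu$-averaged information functions.

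Since every step of this argument is a direct citation, there is no genuine obstacle internal to the corollary itself; the only point demanding a moment's care is verifying that Proposition~\ref{prop:mixing} is stated in exactly the generality needed --- arbitrary class-injective transformation cocycles arising from an essentially free ergodic $S$, and an arbitrary strongly mixing $\Gamma$-action --- which it is. The substantive content, namely the implication ``$\Gamma\curvearrowright X$ strongly mixing $\Rightarrow$ $T$ ergodic for every such cocycle'', is exactly what Proposition~\ref{prop:mixing} supplies (through the adaptation of the Kechris-type argument from \cite[Prop.~5.3]{NP17}), and that step would be the main obstacle were it not already available.
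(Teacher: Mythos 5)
Your proposal is correct and follows exactly the same route as the paper's own proof: invoke Proposition~\ref{prop:mixing} to conclude ergodicity of the skew transformation $T$, then apply Theorem~\ref{thm:SMBmain} for the $(\nu\times\mu)$-a.e.\ and $L^1$ convergence, and finally use Corollary~\ref{cor:SMBint} under the extra integrability hypothesis for the integrated assertions. The additional remarks you make (strong mixing implies ergodicity; $\mathfrak{h}_\cP(\alpha,X)$ is well defined by Proposition~\ref{prop:reporbitalentropy}) are correct bookkeeping but do not change the argument.
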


\begin{proof}
	It follows from Proposition~\ref{prop:mixing} that the extended transformation $T$ on $(Y \times X,\, \nu \times \mu)$ is 
	ergodic. 
	The statement about $\nu \times \mu$-almost everywhere convergence and convergence in $L^1(Y \times X, \nu \times \mu)$ now follows from Theorem~\ref{thm:SMBmain}. With the stronger integrability condition on $\cP$ at our disposal we can use Corollary~\ref{cor:SMBint} in order to get the remaining convergence assertions.
\end{proof}

\end{document}